\documentclass[10pt,]{amsart}

\usepackage[utf8]{inputenc}
\usepackage[T1]{fontenc}
\usepackage[francais]{babel}

\usepackage{graphicx}
\usepackage{amsmath}
\usepackage{amsfonts}
\usepackage{amssymb}
\usepackage{amsthm}
\usepackage{vmargin}

\usepackage[colorlinks=true]{hyperref} 


 \usepackage[all]{xy}
\usepackage{color}

\newtheorem{theorem}{Th\'eor\`eme}[section]

\newtheorem{corollary}[theorem]{Corollaire}
\newtheorem{lemma}[theorem]{Lemme}
\newtheorem{proposition}[theorem]{Proposition}
\newtheorem{definition}[theorem]{D\'efinition}

\newtheorem*{remark}{Remarque}
\newtheorem{example}[theorem]{Exemple}

\newtheorem{thmB}{Th\'eor\`eme}

\newtheorem{corB}{Corollaire}

\renewcommand{\thethmB}{}

\renewcommand{\thecorB}{}

\newcommand{\e}{\epsilon}

\renewcommand{\textbf}[1]{\begingroup\bfseries\mathversion{bold}#1\endgroup}
\def\tdiff{\mathop{\rm tdiff}\nolimits}
\def\trdeg{\mathop{\rm trdeg}\nolimits}

\title{Sp\'ecialisation du groupo\"ide de Galois d'un champ de vecteurs}
\date{\today}

\author{Guy Casale}
\address{Guy Casale, Univ Rennes, CNRS, IRMAR-UMR 6625, F-35000 Rennes, France 
}
\email{\tt guy.casale@univ-rennes1.fr}

\author{Damien Davy}
\address{Damien Davy, Univ Rennes, CNRS, IRMAR-UMR 6625, F-35000 Rennes, France
}
\email{\tt brandavy@outlook.fr}

\subjclass{12H05 34M55}

\keywords{ {(eng)} differential equations, irreducibility, Galois groupoid. {(fr)} \'equations diff\'erentielles, irr\'eductibilit\'e, groupo\"ide de Galois. }

\begin{document}
\maketitle

\begin{abstract}
Nous montrons un r\'esultat de ``semi-continuit\'e'' du groupo\"ide de Galois d'un champs de vecteur d\'ependant d'un param\`etre. Appliqu\'e aux \'equations de Painlev\'e, ce r\'esultat nous permet de calculer le groupo\"ide de Galois de ces \'equations pour des valeurs g\'en\'erales des param\`etres.  
\end{abstract}

\section{Introduction}

\subsection{Th\'eories de Galois diff\'erentielles et \'equations de Painlev\'e}

\`A la fin du dix-neuvi\`eme si\`ecle, les id\'ees d'\'E. Galois ont \'et\'e \'etendues aux \'equations diff\'erentielles lin\'eaires par \'E. Picard 
\cite{Picard} puis ont \'et\'e compl\'et\'ees par E. Vessiot \cite{Vessiot-memoire}. Dans les ann\'ees 1950,
E. Kolchin d\'eveloppe cette th\'eorie du point de vue des extensions de corps diff\'erentiels \cite{BB,Kolchin2}.    

Dans \cite{Drach-these}, J. Drach avance une th\'eorie de Galois pour les \'equations diff\'erentielles non-lin\'eaires. Malgr\'e les erreurs qui invalident la 
plupart de ses d\'efinitions, il donne des indications pour calculer le groupe de rationalit\'e des \'equations de Painlev\'e \cite{Drach-painleve}.
Dans \cite{Vessiot-definition}, E. Vessiot esquisse une d\'efinition rigoureuse.  Elle est \`a l'origine de la 
d\'efinition du groupe de Galois infinit\'esimal de H. Umemura \cite{Umemura-def} g\'en\'eralisant le groupe de Picard-Vessiot d'un syst\`eme diff\'erentiel lin\'eaire. Le groupo\"ide de Galois d'un feuilletage a \'et\'e introduit par B. Malgrange dans \cite{Malgrange}. Cette d\'efinition
concerne les feuilletages holomorphes (singuliers) sur une vari\'et\'e $\mathbb{C}$-analytique lisse. 
Cet objet g\'en\'eralise
 le groupe de Galois intrins\`eque d'un syst\`eme diff\'erentiel lin\'eaire \cite{Bertrand}.

Les premi\`eres tentatives de calcul du groupo\"ide de Galois de la premi\`ere \'equation de Painlev\'e sont d\^us \`a P. Painlev\'e 
\cite{Painleve-note} et J. Drach \cite{Drach-painleve}. Ces calculs ont \'et\'e rendus rigoureux dans \cite{CasaleP1} en utilisant la classification des pseudo-groupes de Lie agissant sur $\mathbb C ^2$ d\^ue \`a E. Cartan \cite{Cartan}.\\

Dans les Le\c{c}ons de Stockholm \cite{leconsdestockholm}, P. Painlev\'e d\'efinit 
une notion de r\'eductibilit\'e d'une solution d'une \'equation diff\'erentielle. Une \'equation est dite r\'eductible si sa solution g\'en\'erale l'est.

Cette d\'efinition est tr\`es restrictive comme le montre P. Painlev\'e dans la remarque 28 de \cite{Painleve-bulletin}. Elle a n\'eanmoins 
l'int\'er\^et de faire appara\^itre les diff\'erences entre la r\'eductibilit\'e d'une \'equation (ou du feuilletage sous-jacent) et celle d'une solution 
particuli\`ere.
L'\'etude de la r\'eductibilit\'e des solutions particuli\`eres des \'equations de Painlev\'e est l'{\oe}uvre de l'\'ecole japonaise. H. Umemura 
\cite{Umemura1} et K. Nishioka \cite{Nishioka} donnent un crit\`ere permettant de trouver les familles \`a un param\`etre de solutions r\'eductibles d'une 
\'equation du second ordre et l'appliquent \`a l'\'etude de la premi\`ere \'equation de Painlev\'e. \`A la suite de ces articles, Murata \cite{Murata}, 
Watanabe \cite{Watanabe5, Watanabe6}, Noumi-Okamoto \cite{Noumi} et Umemura-Watanabe \cite{Umemura24, Umemura3} trouvent les solutions 
r\'eductibles non alg\'ebriques des autres \'equations de Painlev\'e. 

Ces r\'esultats prouvent l'irr\'eductibilit\'e (au sens des Le\c{c}ons de Stockholm) des solutions des \'equations de Painlev\'e pour des valeurs g\'en\'eriques des param\`etres. Dans \cite{Painleve-bulletin} P. Painlev\'e pose la question du rapport entre une d\'efinition de l'irr\'eductibilit\'e d'une \'equation differentielle et le groupe de rationnalit\'e de J. Drach. Dans \cite{Casaleirred}, le premier auteur r\'epond en partie \`a cette question. Consid\'erons un champ de vecteurs rationnel $X$ sur une vari\'et\'e de dimension trois mod\'elisant une \'equation diff\'erentielle du second ordre. Si le corps des invariants diff\'erentiels rationnels de $X$ est engendr\'e par les invariants provenant d'une forme ``temps'' $dx$, d'une forme ``volume'' $dvol$ invariante et de $X$ lui m\^eme alors l'\'equation diff\'erentielle est irr\'eductible.\\       
Ces conditions ont \'et\'e verifi\'ees pour $P_{{\textsc{\romannumeral 1}}}$ dans \cite{CasaleP1}, $P_{{\textsc{\romannumeral 2}}}(0)$ dans  \cite{CasaleWeil} et $P_{{\textsc{\romannumeral 6}}}(\alpha, \beta,\gamma,\delta)$ pour $(\alpha, \beta, \gamma, \delta) \not = (0,0,0,\frac{1}{2})$ dans \cite{Cantat}. Le cas $(\alpha, \beta, \gamma, \delta) = (0,0,0,\frac{1}{2})$ est trait\'e dans \cite{Casale6}.

Les \'equations de Painlev\'e apparaissent naturellement en famille. Nous utiliserons \cite{Ohyama} comme r\'ef\'erence.
$$
\begin{array}{lrcl}
 P_{{\textsc{\romannumeral 1}}}&   u''&=&6u^2+x\\
P_{{\textsc{\romannumeral 2}}}(\alpha) & u''&=&2u^3+xu+\alpha \\
 P_{{\textsc{\romannumeral 3}}}(\alpha,\beta,\gamma,\delta)& u''&=&\frac{u'^2}{u}-\frac{u'}{x}+\frac{\alpha u^2+\beta}{x}+\gamma u^3+\frac{\delta}{u}\\
 P_{{\textsc{\romannumeral 4}}}(\alpha,\beta)  & u''&=&\frac{u'^2}{2u}+\frac{3}{2}u^3+4xu^2+2(t^2-\alpha)u+\frac{\beta}{u}\\
 P_{{\textsc{\romannumeral 5}}}(\alpha,\beta,\gamma,\delta)& u''&=&\left(\frac{1}{2u}+\frac{1}{u-1}\right)u'^2-\frac{u'}{x}+\frac{(u-1)^2}{x^2}\left(\alpha u+\frac{\beta}{u}\right)+\gamma \frac{u}{x}+\delta\frac{u(u+1)}{u-1}\\
 P_{{\textsc{\romannumeral 6}}}(\alpha,\beta,\gamma,\delta) & u''&=&\frac{1}{2}\left(\frac{1}{u}+\frac{1}{u-1}+\frac{1}{u-x}\right)u'^2-\left(\frac{1}{x}+\frac{1}{x-1}+\frac{1}{u-x}\right)u' \\
&&& \hfill +\frac{u(u-1)(u-x)}{x^2(x-1)^2}\left(\alpha+\beta\frac{x}{u^2}+\gamma\frac{x-1}{(u-1)^2}+\delta\frac{x(x-1)}{(u-x)^2}\right)
\end{array}
$$

Nous nous proposons d'\'etudier les variations du groupo\"ide de Galois d'une famille d'\'equations diff\'erentielles en fonction des param\`etres et d'en d\'eduire les r\'esultats d'irreductibilit\'e des \'equations de Painlev\'e. 

Dans le cas d'\'equations diff\'erentielles lin\'eaires, ces variations ont \'et\'e \'etudi\'ees par L. Goldmann \cite{Goldman} puis M.F. Singer \cite{Singer}. En particulier, ces articles montrent que la dimension du groupe de Picard-Vessiot d'un syst\`eme lin\'eaire diff\'erentiel d\'ependant de param\`etres varie semi-continument  inf\'erieurement avec les param\`etres. Ces r\'esultats ont \'et\'e \'etendus \`a des situations tr\`es g\'en\'erales par Y. Andr\'e \cite{Andre} incluant les confluences d'equations aux diff\'erences vers des \'equations diff\'erentielles.

\subsection{Les r\'esultats}

Consid\'erons la deuxi\`eme \'equation de Painlev\'e:
\[
\left \{\begin{array}{ccl}\tag{$P_{{\textsc{\romannumeral 2}}}$}
u''&=&2u^3+xu+\alpha\\
x'&=&1\\
\alpha'&=&0 
\end{array} \right.
\]
Cette \'equation peut \^{e}tre vue comme un champ de vecteurs sur $\mathbb{C}^4$
\[
X_{{\textsc{\romannumeral 2}}}=\frac{\partial}{\partial x}+v\frac{\partial}{\partial u}+(2u^3+xu+\alpha)\frac{\partial}{\partial v}
\]
Pour une valeur de $\alpha$ fix\'ee \`a $\alpha_0$, l'\'equation
\[
u''=2u^3+xu+\alpha_0 \tag{$P_{{\textsc{\romannumeral 2}}}(\alpha_0)$}
\]
peut \^{e}tre vue comme le champ de vecteurs sur $\mathbb{C}^3$
\[
X_{{\textsc{\romannumeral 2}}}\vert_{\alpha_0}=\frac{\partial}{\partial x}+v\frac{\partial}{\partial u}+(2u^3+xu+\alpha_0)\frac{\partial}{\partial v}
\]
Notre but est de comparer les groupo\"ides de Galois des champs $X_{{\textsc{\romannumeral 2}}}\vert_{\alpha_0}$ pour diff\'erentes valeurs de $\alpha_0$. Pour ce faire, nous les comparerons au groupo\"ide de Galois de $X_{{\textsc{\romannumeral 2}}}$.

La taille du groupo\"ide de Galois d'un champ de vecteurs sera mesur\'e par son type diff\'erentiel, not\'e $\tdiff$, d\'efini par E. R. Kolchin dans (\cite{Kolchin2}). 
Nous obtenons le r\'esultat suivant :

\renewcommand{\thethmB}{\ref{thmcroissance}}
\begin{thmB}
Soient $\rho:M\rightarrow S$ un morphisme lisse \`a fibres connexes entre deux vari\'et\'es lisses irr\'eductibles, $q_0\in S$ et $X$ un champ de vecteurs rationnel tangent aux fibres de $\rho$. Pour $q\in S$ g\'en\'eral,
\[
\tdiff(Gal(X\vert_{q_0}))\leq \tdiff(Gal(X\vert_q))
\]
\end{thmB}

Une propri\'et\'e de $q$ est dite g\'en\'erale dans l'espace des param\`etres $S$ si elle est vraie en dehors d'une union d\'enombrable de sous-vari\'et\'es ferm\'ees strictes de $S$.

Le type diff\'erentiel du groupo\"ide de Galois d'une \'equation d'ordre $2$ est toujours inf\'erieur ou \'egal \`a $2$. Nous savons par ailleurs \cite{CasaleWeil} que $\tdiff(Gal(P_{{\textsc{\romannumeral 2}}}(0)))=2$. Le th\'eor\`eme pr\'ec\'edent montre ainsi que pour $\alpha\in\mathbb{C}$ g\'en\'eral,  $\tdiff(Gal(P_{{\textsc{\romannumeral 2}}}(\alpha)))=2$.
La classification d'E. Cartan des pseudo-groupes de Lie de transformations du plan dans (\cite{Cartan}) donne le r\'esultat suivant:

\renewcommand{\thethmB}{\ref{corvolume}}
\begin{thmB}
Consid\'erons l'\'equation du second ordre
\begin{equation}\tag{$E$}
\frac{d^2u}{dx^2}=F\left(x,u,\frac{du}{dx}\right)
\end{equation}
o\`{u} $F\in \mathbb{C}(x,u,v)$. Soient $X_F$ le champ de vecteurs associ\'e, $dvol$ une $3$-forme rationnelle sur $\mathbb{C}^3$. Notons 
\[
Vol(E):=\{\widehat{\phi} \in \widehat{Aut}(\mathbb C^3)\ \vert\ \widehat{\phi}^*dx=dx,\ \widehat{\phi}^*X_F=X_F \text{ et } \widehat{\phi}^*dvol=dvol\}
\]
Si $\mathcal{L}_{X_F}dvol=0$ et $\tdiff(Gal(E))=2$, alors $Gal(E)=Vol(E)$.
\end{thmB}

Les champs $X_{{\textsc{\romannumeral 2}}}\vert_{\alpha}$ v\'erifiant les hypoth\`eses du th\'eor\`eme,  nous obtenons :
 
 \renewcommand{\thecorB}{\ref{2emepainleve}}
\begin{corB}
Pour $\alpha\in\mathbb{C}$ g\'en\'eral, $Gal(P_{{\textsc{\romannumeral 2}}}(\alpha))=Vol(P_{{\textsc{\romannumeral 2}}}(\alpha))$.
\end{corB}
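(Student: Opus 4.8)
Le plan est de combiner la semi-continuité du type différentiel (Théorème \ref{thmcroissance}) avec le critère de classification (Théorème \ref{corvolume}). D'abord, je considérerais la famille $X_{{\textsc{\romannumeral 2}}}$ comme un champ rationnel sur $\mathbb{C}^4$ de coordonnées $(x,u,v,\alpha)$, tangent aux fibres de la projection $\rho:\mathbb{C}^4\to\mathbb{C}$, $(x,u,v,\alpha)\mapsto\alpha$ : en effet $X_{{\textsc{\romannumeral 2}}}$ n'a pas de composante selon $\partial/\partial\alpha$. Ce morphisme est lisse à fibres connexes (les fibres étant des copies de $\mathbb{C}^3$) entre variétés lisses irréductibles, et $X_{{\textsc{\romannumeral 2}}}\vert_{\alpha_0}$ est bien la restriction annoncée. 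Les hypothèses du Théorème \ref{thmcroissance} sont donc satisfaites.

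Ensuite, j'appliquerais le Théorème \ref{thmcroissance} avec $q_0=0$. Comme $\tdiff(Gal(P_{{\textsc{\romannumeral 2}}}(0)))=2$ d'après \cite{CasaleWeil}, on obtient pour $\alpha\in\mathbb{C}$ général
\[
2=\tdiff(Gal(P_{{\textsc{\romannumeral 2}}}(0)))\leq\tdiff(Gal(P_{{\textsc{\romannumeral 2}}}(\alpha))).
\]
Comme le type différentiel du groupoïde de Galois d'une équation d'ordre $2$ est toujours inférieur ou égal à $2$, cette inégalité est en fait une égalité : $\tdiff(Gal(P_{{\textsc{\romannumeral 2}}}(\alpha)))=2$ pour $\alpha$ général.

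Il reste à vérifier les hypothèses du Théorème \ref{corvolume} pour le champ $X_{{\textsc{\romannumeral 2}}}\vert_{\alpha}$, associé à l'équation d'ordre $2$ de second membre $F=2u^3+xu+\alpha\in\mathbb{C}(x,u,v)$. Je prendrais la $3$-forme standard $dvol=dx\wedge du\wedge dv$ et calculerais la divergence de $X_{{\textsc{\romannumeral 2}}}\vert_{\alpha}$ :
\[
\operatorname{div}\left(X_{{\textsc{\romannumeral 2}}}\vert_\alpha\right)=\frac{\partial(1)}{\partial x}+\frac{\partial(v)}{\partial u}+\frac{\partial(2u^3+xu+\alpha)}{\partial v}=0,
\]
d'où $\mathcal{L}_{X_{{\textsc{\romannumeral 2}}}\vert_\alpha}dvol=0$. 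Avec la condition $\tdiff=2$ obtenue ci-dessus, le Théorème \ref{corvolume} donne alors $Gal(P_{{\textsc{\romannumeral 2}}}(\alpha))=Vol(P_{{\textsc{\romannumeral 2}}}(\alpha))$ pour $\alpha$ général.

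Le cœur de la difficulté ayant déjà été absorbé par les deux théorèmes invoqués, il n'y a ici aucun obstacle sérieux. Le seul point méritant attention est l'ordre des quantificateurs dans le Théorème \ref{thmcroissance} : c'est la valeur fixe $q_0=0$ qui fournit la \emph{borne inférieure}, et non la valeur générale. Ainsi aucune hypothèse de généricité n'est requise en $\alpha=0$ ; il suffit que $\tdiff(Gal(P_{{\textsc{\romannumeral 2}}}(0)))=2$, ce qui est connu. La vérification de l'invariance de la forme volume, quant à elle, est immédiate par le calcul de divergence ci-dessus.
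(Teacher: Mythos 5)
Votre démonstration est correcte et suit essentiellement la même voie que l'article : celui-ci invoque le corollaire \ref{spvol}, qui n'est rien d'autre que la combinaison du théorème \ref{thmcroissance} (semi-continuité, appliquée en $q_0=0$ grâce à $\tdiff(Gal(P_{{\textsc{\romannumeral 2}}}(0)))=2$) et du théorème \ref{corvolume}, que vous déroulez explicitement. Votre vérification de la préservation de $dx\wedge du\wedge dv$ par calcul de divergence et la remarque sur l'ordre des quantificateurs sont exactement les points que l'article laisse implicites.
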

Associ\'e au r\'esultat de \cite{Casaleirred} donnant un lien entre la structure du groupo\"ide de Galois et irr\'eductibilit\'e d'une \'equation, nous en d\'eduisons: ,
\renewcommand{\thecorB}{\ref{2emepainleve}}
\begin{corB}
Pour $\alpha\in\mathbb{C}$ g\'en\'eral, $P_{{\textsc{\romannumeral 2}}}(\alpha)$ est irr\'eductible.
\end{corB}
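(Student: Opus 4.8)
Le plan est de combiner le corollaire pr\'ec\'edent, qui identifie le groupo\"ide de Galois \`a $Vol(P_{{\textsc{\romannumeral 2}}}(\alpha))$, avec le crit\`ere d'irr\'eductibilit\'e \'etabli dans \cite{Casaleirred}. Ce crit\`ere affirme qu'une \'equation du second ordre, donn\'ee par un champ de vecteurs rationnel $X$ sur une vari\'et\'e de dimension trois, est irr\'eductible d\`es que le corps des invariants diff\'erentiels rationnels de $X$ est engendr\'e par les invariants provenant d'une forme ``temps'' $dx$, d'une forme ``volume'' invariante $dvol$ et du champ $X$ lui-m\^eme.

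Je commencerais par pr\'eciser la forme volume. Pour $X_{{\textsc{\romannumeral 2}}}\vert_{\alpha}=\partial_x+v\partial_u+(2u^3+xu+\alpha)\partial_v$, la $3$-forme $dvol=dx\wedge du\wedge dv$ v\'erifie $\mathcal{L}_{X_{{\textsc{\romannumeral 2}}}\vert_{\alpha}}dvol=0$, car la divergence relative $\partial_x(1)+\partial_u(v)+\partial_v(2u^3+xu+\alpha)$ est nulle. Les hypoth\`eses du Th\'eor\`eme~\ref{corvolume} sont ainsi v\'erifi\'ees et le corollaire pr\'ec\'edent donne, pour $\alpha\in\mathbb{C}$ g\'en\'eral, l'\'egalit\'e $Gal(P_{{\textsc{\romannumeral 2}}}(\alpha))=Vol(P_{{\textsc{\romannumeral 2}}}(\alpha))$.

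L'\'etape centrale consiste \`a traduire cette \'egalit\'e de pseudo-groupes en une \'egalit\'e de corps d'invariants diff\'erentiels. Par la correspondance de Galois reliant le groupo\"ide de Galois \`a son corps d'invariants, le corps des invariants diff\'erentiels rationnels de $X_{{\textsc{\romannumeral 2}}}\vert_{\alpha}$ co\"incide avec le corps des fonctions invariantes sous $Vol(P_{{\textsc{\romannumeral 2}}}(\alpha))$. Or ce dernier pseudo-groupe est d\'efini comme le stabilisateur des trois tenseurs $dx$, $X_{{\textsc{\romannumeral 2}}}\vert_{\alpha}$ et $dvol$; son corps d'invariants est donc engendr\'e par les invariants diff\'erentiels construits \`a partir de ces trois objets et de leurs cons\'equences diff\'erentielles. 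Le corps des invariants de $X_{{\textsc{\romannumeral 2}}}\vert_{\alpha}$ satisfait alors exactement l'hypoth\`ese du crit\`ere de \cite{Casaleirred}, d'o\`u l'irr\'eductibilit\'e de $P_{{\textsc{\romannumeral 2}}}(\alpha)$ pour $\alpha$ g\'en\'eral.

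Le point d\'elicat me para\^it \^etre cette derni\`ere traduction : il faut s'assurer que l'\'egalit\'e $Gal=Vol$ entra\^ine que le corps des invariants diff\'erentiels de $X_{{\textsc{\romannumeral 2}}}\vert_{\alpha}$ est \emph{exactement} celui engendr\'e par les trois g\'en\'erateurs pr\'evus, sans invariant suppl\'ementaire cach\'e. C'est pr\'ecis\'ement l\`a qu'intervient la correspondance de Galois, et c'est la seule partie non imm\'ediate de l'argument, le reste se r\'eduisant \`a la mise en place des hypoth\`eses des r\'esultats d\'ej\`a \'etablis.
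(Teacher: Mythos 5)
Votre d\'emonstration suit pour l'essentiel la m\^eme route que celle de l'article : on \'etablit d'abord $Gal(P_{{\textsc{\romannumeral 2}}}(\alpha))=Vol(P_{{\textsc{\romannumeral 2}}}(\alpha))$ pour $\alpha$ g\'en\'eral (pr\'eservation de $dx\wedge du\wedge dv$, th\'eor\`eme \ref{casalespe} et corollaire \ref{spvol}), puis on invoque le crit\`ere d'irr\'eductibilit\'e issu de \cite{CasaleP1,Casaleirred}. La seule divergence est votre paragraphe central. L'article cite ce crit\`ere directement sous la forme du th\'eor\`eme \ref{croissancered} --- \og si $Gal(E(q_0))=Vol(E(q_0))$ alors $E(q_0)$ est irr\'eductible \fg{} --- ce qui rend superflue la traduction que vous proposez de l'\'egalit\'e de groupo\"ides en une \'egalit\'e de corps d'invariants diff\'erentiels. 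Telle que vous l'\'enoncez, cette traduction n'est d'ailleurs pas justifi\'ee : deux corps d'invariants distincts peuvent a priori avoir le m\^eme stabilisateur, et l'affirmation qu'il n'y a \og pas d'invariant suppl\'ementaire cach\'e \fg{} est pr\'ecis\'ement le contenu non trivial d'une correspondance de Galois qu'il faudrait \'etablir (elle repose sur l'involutivit\'e \`a la Cartan--Malgrange et n'est pas d\'emontr\'ee dans le pr\'esent texte). Vous identifiez correctement ce point comme d\'elicat mais le laissez en suspens ; en utilisant l'\'enonc\'e du th\'eor\`eme \ref{croissancered} tel quel, ce d\'etour dispara\^it et votre argument co\"incide avec la preuve de l'article.
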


Ces r\'esultats obtenus sur la deuxi\`eme \'equation de Painlev\'e s'\'etendent aux autres \'equations de Painlev\'e. Ces \'equations d\'eg\'en\`erent les unes sur les autres en suivant le diagramme
(voir \cite{Ohyama}):
$$
\xymatrix{
\mathbf{P_{{\textsc{\romannumeral 6}}}}\ar[r] &\mathbf{P_{{\textsc{\romannumeral 5}}}}\ar[r]\ar[d]&\mathbf{P_{{\textsc{\romannumeral 3}}}}\ar[d]& \\
&\mathbf{P_{{\textsc{\romannumeral 4}}}}\ar[r]& \mathbf{P_{{\textsc{\romannumeral 2}}}}\ar[r]&\mathbf{P_{{\textsc{\romannumeral 1}}}} \\
}
$$
Nous utilisons cette d\'eg\'en\'erescence pour montrer: 
\renewcommand{\thethmB}{\ref{corirred}}
\begin{thmB}
Pour $J={\textsc{\romannumeral 1}},{\textsc{\romannumeral 2}},{\textsc{\romannumeral 3}},{\textsc{\romannumeral 4}},{\textsc{\romannumeral 5}},{\textsc{\romannumeral 6}}$ et pour $q$ g\'en\'eral dans l'espace des param\`etres de $P_J$, $Gal(P_J(q))=Vol(P_J(q))$.
\end{thmB}
Gr\^{a}ce \`a \cite{Casaleirred}, nous retrouvons un r\'esultat connu (voir \cite{Lisovyy, Murata, Nishioka, Noumi, Umemura1, Umemura24, Umemura3, Watanabe5, Watanabe6}): 
\renewcommand{\thecorB}{\ref{coroirred}}
\begin{corB}
Pour $J={\textsc{\romannumeral 1}},{\textsc{\romannumeral 2}},{\textsc{\romannumeral 3}},{\textsc{\romannumeral 4}},{\textsc{\romannumeral 5}},{\textsc{\romannumeral 6}}$ et pour $q$ g\'en\'eral dans l'espace des param\`etres de $P_J$, l'\'equation $P_J(q)$ est irr\'eductible au sens de Nishioka-Umemura.
\end{corB}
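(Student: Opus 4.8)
Le plan est de déduire ce corollaire du théorème \ref{corirred} en le confrontant au critère d'irréductibilité de \cite{Casaleirred} rappelé dans l'introduction. Je fixerais $q$ général dans l'espace des paramètres de $P_J$, de sorte que le théorème \ref{corirred} donne $Gal(P_J(q)) = Vol(P_J(q))$. Par définition, $Vol(P_J(q))$ est le sous-groupoïde des transformations formelles préservant simultanément la forme temps $dx$, le champ $X_F$ et une forme volume $dvol$ satisfaisant $\mathcal{L}_{X_F}dvol = 0$ ; l'égalité ci-dessus affirme donc que le groupoïde de Galois de $P_J(q)$ coïncide avec ce groupoïde de type volume, c'est-à-dire qu'il est le plus gros possible compte tenu de ces trois structures.

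L'étape suivante consiste à traduire cette égalité de groupoïdes en une description du corps des invariants différentiels rationnels de $X_F$. Via la correspondance entre un pseudo-groupe de Lie et son algèbre d'invariants différentiels, l'égalité $Gal(P_J(q)) = Vol(P_J(q))$ force ce corps à coïncider avec le corps des invariants de $Vol(P_J(q))$. Or ce dernier, étant le groupoïde défini par la conservation des trois structures $dx$, $dvol$ et $X_F$, admet pour corps d'invariants différentiels rationnels celui engendré par les invariants provenant de ces trois structures.

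On reconnaît alors exactement l'hypothèse du critère de \cite{Casaleirred} : si le corps des invariants différentiels rationnels de $X$ est engendré par les invariants issus d'une forme temps $dx$, d'une forme volume invariante $dvol$ et de $X$ lui-même, alors l'équation correspondante est irréductible au sens de Nishioka-Umemura. On conclut donc que $P_J(q)$ est irréductible pour $q$ général, ce qui démontre le corollaire.

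Le point le plus délicat est l'étape intermédiaire : établir rigoureusement que l'égalité $Gal = Vol$ au niveau des groupoïdes équivaut à l'égalité des corps d'invariants différentiels, et surtout exhiber un système de générateurs de ce corps formé des seuls invariants de $dx$, $dvol$ et $X_F$. Cela repose sur la dualité entre un pseudo-groupe de Lie et ses invariants, ainsi que sur la description explicite des invariants différentiels du pseudo-groupe de type volume issue de la classification de Cartan déjà utilisée pour le théorème \ref{corvolume}.
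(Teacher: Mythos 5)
Your proposal is correct and follows essentially the same route as the paper: apply Theorem \ref{corirred} to get $Gal(P_J(q))=Vol(P_J(q))$ for general $q$, then invoke the irreducibility criterion of Casale. Note only that the paper's Theorem \ref{croissancered} states this criterion directly at the level of groupoids ($Gal(E(q_0))=Vol(E(q_0))$ implies irreducibility), so the detour you take through the equality of fields of rational differential invariants --- which you yourself flag as the delicate point --- is unnecessary here.
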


\subsection{Organisation de l'article}

Cet article est compos\'e de quatre parties. Dans la premi\`ere partie, nous rappelons les d\'efinitions des objets avec lesquels nous travaillons. Nous d\'efinissons des espaces de jets d'applications \`a valeurs dans les fibres d'un morphisme $M\rightarrow S$, l'ouvert des rep\`eres des fibres $R(M/S)$ et le groupo\"{i}de associ\'e $Aut(M/S)$. Nous donnons leurs structures alg\'ebriques et d\'ecrivons les prolongements canoniques d'un champ de vecteurs tangent aux fibres de $M/S$ \`a ces espaces de jets.

Dans une deuxi\`eme partie, nous d\'efinissons trois sous-espaces de $Aut(M/S)$
\begin{enumerate}
\item  la plus petite sous-vari\'et\'e tangente au prolongement du champ de vecteurs sur $Aut(M/S)$ et contenant l'identit\'e
\item  le plus petit sous-groupo\"{i}de tangent au prolongement du champ de vecteurs sur $Aut(M/S)$
\item  la sous-vari\'et\'e constitu\'ee des automorphismes pr\'eservant les int\'egrales premi\`eres rationnelles du prolongement du champ de vecteurs sur $R(M/S)$
\end{enumerate}
 Nous montrons que ces trois d\'efinitions co\"{i}ncident en utilisant les r\'esultats de P. Bonnet \cite{Bonnet} et nommons cet objet le groupo\"ide de Galois de $X$ sur $S$.

  Lorsqu'il n'y a pas de param\`etres, {\it i.e.} $S$ est un point, nous montrons que ce groupo\"ide est le groupo\"ide de Galois de B. Malgrange d\'efini dans \cite{Malgrange}. 

Dans la troisi\`eme partie, nous comparons les "tailles" de diff\'erents groupo\"ide de Galois en terme de type diff\'erentiel. Nous montrons le th\'eor\`eme \ref{thmsp\'ec} de sp\'ecialisation dont la cons\'equence imm\'ediate est la "semi-continuit\'e" du type diff\'erentiel du groupo\"ide de Galois par rapport aux param\`etres : th\'eor\`eme \ref{thmcroissance}.

Dans la derni\`ere partie, nous donnons des applications du r\'esultat de sp\'ecialisation aux \'equations diff\'erentielles du second ordre d\'ependant de param\`etres, notamment aux \'equations de Painlev\'e. 

\section{Les espaces de rep\`eres}
Soient $S$ et $M$ deux vari\'et\'es alg\'ebriques complexes, affines, lisses, connexes, de dimensions respectives $d$, $m + d$. Soit $\rho:
M \rightarrow S$ un morphisme lisse \`a fibres connexes. Nous noterons $M_q$ la fibre du morphisme en $q\in S$.
 La vari\'et\'e $S$ sera appel\'ee espace des param\`etres.

\subsection{Le fibr\'e principal des $S$-rep\`eres}


 \begin{definition}
Un $S$-rep\`ere en $p\in M$ est une application formelle $r:(\mathbb{C}^m,0)\rightarrow(M,p)$ telle que:
\begin{itemize}
\item la matrice jacobienne $D_0 r$ soit de rang $m$
\item $\rho\circ r$ soit l'application constante \'egale \`a $\rho(p)$ 
\end{itemize} 
Un $S$-rep\`ere d'ordre $k$ est le jet d'ordre $k$ d'un $S$-rep\`ere.
\noindent Si $S$ est un point, nous dirons que $r$ est un rep\`ere sur $M$.
\end{definition}

\subsubsection{D\'efinition}

Soit $U$ un ouvert de Zarisky  contenant $p\in M$ qui soit un rev\^{e}tement non ramifi\'e d'un ouvert de $\mathbb{C}^{m+d}$ et qui fait commuter le diagramme :
\[
\xymatrix{
    U\ar[r] \ar[d] & \mathbb{C}^{m+d}\ar[d] \\
    S\ar[r]&\mathbb{C}^d}
\] 
Nous appellerons $U$ une carte de $M/S$ en $p$. 
 Il vient un syst\`eme de coordonn\'ees $p=(p',q)\in \mathbb{C}^m\times \mathbb{C}^d$ dans lequel un $S$-rep\`ere s'\'ecrit: 
 \begin{equation}\label{serie}
 r(\e_1,\ldots,\e_m)=\left(\underset{\alpha\in \mathbb{N}^m}{\sum}r_1^{\alpha}\frac{\epsilon^{\alpha}}{\alpha !},\ldots,\underset{\alpha\in \mathbb{N}^m}{\sum}r_m^{\alpha}\frac{\epsilon^{\alpha}}{\alpha !},q\right)
 \end{equation}
  o\`{u}, pour $1\leq i\leq m$, $r_i^{\alpha}\in \mathbb{C}$, $\epsilon^{\alpha}=\epsilon_1^{\alpha_1}\ldots\epsilon_m^{\alpha_m} $ et $\alpha !=\alpha_1 !\ldots\alpha_m !$. Notons $1_j$ le multi-indice ayant $1$ sur la $j$-\`eme coordonn\'ee et $0$ sur les autres. Dans ces coordonn\'ees, $D_0 r$ est la matrice $\left(r_i^{1_j}\right)_{ij}$ compl\'et\'ee par z\'ero sur les $d$ derni\`eres lignes.  Nous noterons $jac(r)=\det(r_i^{1_j})$. Le $m$-uplet de s\'eries (\ref{serie}) tronqu\'ees \`a l'ordre $k\in\mathbb{N}$ est un $S$-rep\`ere d'ordre $k$.

\begin{definition}
L'espace des $S$-rep\`eres est $R(M/S)$ et l'espace des $S$-rep\`eres d'ordre $k\in\mathbb{N}$ est $R_k(M/S)$.  
\end{definition}

Par d\'efinition, l'espace des $S$-rep\`eres est la limite projective des espaces des $S$-rep\`eres d'ordre $k\in\mathbb{N}$.

\subsubsection{Structure de vari\'et\'e pro-alg\'ebrique}\label{ssecalgrep}

La description locale de la structure alg\'ebrique de $R(M/S)$ se fait de la mani\`ere suivante.

Prenons $U\subset M$ une carte de $M/S$ en $p$, de coordonn\'ees $y_1,\ldots,y_m,z_1,\ldots,z_d$. L'ensemble des $S$-rep\`eres d'ordre $k\in\mathbb{N}$ sur $U$ est muni d'une structure de vari\'et\'e affine dont l'anneau de coordonn\'ees est 
\[
\mathcal{O}_M(U)[y_i^{\alpha},\ 1\leq i\leq m,\ \alpha\in \mathbb{N}^m,\ 1\leq\vert\alpha\vert\leq k][1/\det(y_i^{1_j})],
\] 
o\`{u} $y_i^\alpha$ est la fonction d\'efinie par $y_i^\alpha (r) = r_i^\alpha$. L'anneau de coordonn\'ees de la limite projective $R(U/S)$ est 
\[
\mathcal{O}_M(U)[y_i^{\alpha},\ 1\leq i\leq m,\ \alpha\in \mathbb{N}^m][1/\det(y_i^{1_j})],
\] 
Les d\'erivations $\partial_1,\ldots,\partial_m$ d\'efinies par $\partial_iy_j^\alpha:=y_j^{\alpha+1_j},\ \partial_iz_j=0$ donne une structure de $\mathbb{C}[\partial_1,\ldots,\partial_m]$-alg\`ebre diff\'erentielle \`a cet anneau.

Cette structure admet une description globale. Le faisceau structural de $RM$ est enti\`erement d\'etermin\'ee par son image directe sur $M$ par $R(M/S)\rightarrow M$. Par abus, nous parlerons indiff\'eremment du faisceau structural de $R(M/S)$ et de son image directe sur $M$.

Notons $Jac$ le faisceau coh\'erent d'id\'eaux engendr\'e par $\det(y_i^{1_j})$ o\`{u} $y_1,\ldots,y_m$ sont les coordonn\'es sur une carte $U$ de $M/S$. Ce faisceau en id\'eaux est bien d\'efini car si $\tilde{y}_1,\ldots,\tilde{y}_m$ sont d'autres coordonn\'ees, alors $\det(\tilde{y}_i^{1_j})=c \det(y_i^{1_j})$ o\`{u} $c$ est une fonction non nulle.

\begin{definition}
Le faisceau $\mathcal{O}_{R(M/S)}$ des coordonn\'ees de l'espace des $S$-rep\`eres est:
\[
\left(Sym(\mathbb{C}[\partial_1,\ldots,\partial_m]\otimes \mathcal{O}_M)/\mathcal{L}\right)[1/Jac]
\]
o\`{u} 
\begin{itemize}
\item $\mathbb{C}[\partial_1,\ldots,\partial_m]\otimes \mathcal{O}_M$ est un produit tensoriel de $\mathbb{C}$-espaces vectoriels,
\item $Sym(\mathbb{C}[\partial_1,\ldots,\partial_m]\otimes \mathcal{O}_M)$ est l'alg\`ebre sym\'etrique engendr\'ee l'espace vectoriel pr\'ec\'edent,
\item cette alg\`ebre a une structure de $\mathbb{C}[\partial_1,\ldots,\partial_m]$-alg\`ebre diff\'erentielle \`a gauche,
\item elle est quotient\'ee par l'id\'eal diff\'erentiel $\mathcal{L}$ engendr\'e par le noyau de $Sym(1\otimes_{\mathbb{C}}\mathcal{O}_M)\rightarrow\mathcal{O}_M$ et par les $\partial_i\otimes\mathcal{O}_S$,
\item elle est enfin localis\'ee par $Jac$
\end{itemize}
Le faisceau $\mathcal{O}_{R_k(M/S)}$ des coordonn\'ees de l'espace des $S$-rep\`eres d'ordre $k\in\mathbb{N}$ est:
\[
(Sym(\mathbb{C}[\partial_1,\ldots,\partial_m]^{\leq k}\otimes \mathcal{O}_M)/(\mathcal{L}\cap Sym(\mathbb{C}[\partial_1,\ldots,\partial_m]^{\leq k}\otimes \mathcal{O}_M))[1/jac]
\]
o\`{u} $\mathbb{C}[\partial_1,\ldots,\partial_m]^{\leq k}$ d\'esigne l'ensemble des polyn\^{o}mes en les $\partial_i$ de degr\'e au plus $k$.
\end{definition}

Les espaces des $S$-rep\`eres d'ordre $k\in\mathbb{N}$ sont des vari\'et\'es irr\'eductibles et l'espace des $S$-rep\`eres est une vari\'et\'e pro-alg\'ebrique. 

Nous retrouvons la structure locale, sur une carte de $M/S$. Soit $U\subset M$ une carte de $M/S$ de coordonn\'ees $y_1,\ldots,y_m,z_1,\ldots,z_d$ :

\begin{lemma} \label{lemreperelocglob}
 Nous avons un isomorphisme 
\[
\mathcal{O}_{R(M/S)}(U)\simeq\mathcal{O}_M(U)[y_i^{\alpha}\mbox{, }1\leq i\leq m\mbox{, }\alpha\in \mathbb{N}^m\mbox{, }1\leq\vert\alpha][1/det(y_i^{1_j})]
\] 
qui est compatible aux actions de $\mathbb{C}[\partial_1,\ldots,\partial_m]$. La filtration de $\mathcal{O}_{R(M/S)}(U)$ par l'ordre des op\'erateurs diff\'erentiel co\"incide avec la filtration par le poids $|\alpha|$ des nouvelles variables $y_i^\alpha$.

\end{lemma}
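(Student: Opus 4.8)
\emph{Plan de preuve.} L'\'enonc\'e affirme que la d\'efinition globale du faisceau $\mathcal{O}_{R(M/S)}$ co\"incide, en restriction \`a une carte $U$ de $M/S$, avec la description locale explicite. Le plan est de construire un isomorphisme de $\mathbb{C}[\partial_1,\ldots,\partial_m]$-alg\`ebres diff\'erentielles entre les deux pr\'esentations, puis de localiser par $Jac$. Posons $A=\mathcal{O}_M(U)$, notons $D^\beta f:=\partial^\beta\otimes f$ pour $\beta\in\mathbb{N}^m$ et $f\in A$, et $B:=(Sym(\mathbb{C}[\partial_1,\ldots,\partial_m]\otimes A)/\mathcal{L})(U)$.

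Je commencerais par d\'ecrire les cons\'equences des g\'en\'erateurs de l'id\'eal diff\'erentiel $\mathcal{L}$. Le noyau de $Sym(1\otimes_{\mathbb{C}}A)\to A$ identifie dans $B$ les symboles d'ordre $0$ \`a $A$, de sorte que $D^0 f=f$ et que $A$ se plonge comme partie d'ordre $0$. Comme $\mathcal{L}$ est diff\'erentiel, l'application des $\partial^\gamma$ \`a ces relations engendre toutes les r\`egles de Leibniz : dans $B$, $D^\beta(fg)=\sum_{\beta'+\beta''=\beta}\binom{\beta}{\beta'}D^{\beta'}f\,D^{\beta''}g$ pour tous $f,g\in A$. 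De m\^eme, les g\'en\'erateurs $\partial_i\otimes\mathcal{O}_S$, une fois d\'eriv\'es, donnent $D^\beta h=0$ pour $|\beta|\geq 1$ et $h\in\mathcal{O}_S$ ; comme la carte fait commuter le diagramme, les $z_j$ proviennent de $\mathcal{O}_S$, d'o\`u $D^\beta z_j=0$ d\`es que $|\beta|\geq 1$.

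L'\'etape de surjectivit\'e reposerait alors sur la formule de Fa\`a di Bruno. Puisque $y_1,\ldots,y_m,z_1,\ldots,z_d$ sont des coordonn\'ees \'etales sur $U$, les $dy_i,dz_j$ forment une base de $\Omega^1_{A/\mathbb{C}}$ et les d\'erivations $\partial/\partial y_l$ sont bien d\'efinies sur $A$. La r\`egle de la cha\^ine it\'er\'ee, combin\'ee aux relations ci-dessus (annulation des d\'eriv\'ees des $z_j$), exprime chaque symbole $D^\beta f$ comme un polyn\^ome \`a coefficients dans $A$ en les seuls $y_l^\alpha:=D^\alpha y_l$ ($|\alpha|\geq 1$) ; l'homomorphisme naturel $\Phi:A[y_i^\alpha:|\alpha|\geq 1]\to B$ est donc surjectif. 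Pour l'injectivit\'e -- c'est le point principal -- je construirais l'application r\'eciproque explicite : le ``prolongement universel'' $Sym(\mathbb{C}[\partial_1,\ldots,\partial_m]\otimes A)\to A[y_i^\alpha]$ envoyant $D^\beta f$ sur la d\'eriv\'ee totale formelle $\beta$-i\`eme de $f$, calcul\'ee par la r\`egle de la cha\^ine en prenant les $y_l^\alpha$ comme variables libres. On v\'erifie que cette application annule chacun des g\'en\'erateurs de $\mathcal{L}$ (lin\'earit\'e, relation d'ordre $0$, Leibniz, et $D^\beta z_j=0$), donc passe au quotient en $\Psi:B\to A[y_i^\alpha]$ ; comme $\Psi\circ\Phi$ vaut l'identit\'e sur les g\'en\'erateurs $a\in A$ et $y_l^\alpha$, la surjection $\Phi$ est aussi injective. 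Il reste \`a inverser $\det(y_i^{1_j})$ des deux c\^ot\'es pour obtenir l'isomorphisme annonc\'e.

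La difficult\'e principale tient \`a cette absence de relations suppl\'ementaires : il faut s'assurer que $\mathcal{L}$, comme id\'eal diff\'erentiel, n'est engendr\'e que par les relations de Leibniz et de trivialit\'e sur la base, afin que les $y_i^\alpha$ restent alg\'ebriquement ind\'ependants au-dessus de $A$. C'est le caract\`ere \'etale des coordonn\'ees $y_i$, et la construction du prolongement universel comme inverse, qui garantissent ce point. Enfin, la compatibilit\'e aux actions de $\mathbb{C}[\partial_1,\ldots,\partial_m]$ est imm\'ediate : $\partial_i$ envoie $D^\alpha y_j=y_j^\alpha$ sur $D^{\alpha+1_i}y_j=y_j^{\alpha+1_i}$ de part et d'autre. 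Quant \`a la co\"incidence des filtrations, elle se lit sur les pr\'esentations tronqu\'ees : la sous-alg\`ebre des op\'erateurs d'ordre $\leq k$, engendr\'ee par les symboles $D^\beta f$ avec $|\beta|\leq k$, correspond exactement \`a la sous-alg\`ebre $A[y_i^\alpha:|\alpha|\leq k]$ engendr\'ee par les variables de poids $\leq k$, ce qui fournit au passage l'isomorphisme analogue pour chaque $\mathcal{O}_{R_k(M/S)}(U)$.
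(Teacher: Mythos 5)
Votre proposition est correcte et suit essentiellement la m\^eme d\'emarche que l'article : on construit le morphisme d'alg\`ebres diff\'erentielles envoyant $\partial^\alpha\otimes y_i$ sur $y_i^\alpha$ (et $\partial^\beta\otimes f$ sur sa d\'eriv\'ee totale formelle via Leibniz et Fa\`a di Bruno), on v\'erifie qu'il annule $\mathcal{L}$, puis qu'il est bijectif, avant de localiser en $\det(y_i^{1_j})$. La seule diff\'erence est de pr\'esentation : vous \'etablissez l'injectivit\'e en exhibant explicitement l'inverse (le prolongement universel $\Psi$) et en v\'erifiant $\Psi\circ\Phi=\mathrm{id}$ sur les g\'en\'erateurs, l\`a o\`u l'article r\'eduit tout \'el\'ement \`a une forme normale en les $\partial^\alpha\otimes y_i$ et invoque l'ind\'ependance lin\'eaire des mon\^omes images --- deux formulations du m\^eme argument.
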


\begin{proof}
Commen\c{c}ons par d\'efinir un morphisme d'espace vectoriel \`a partir de l'injection canonique:
\[
\begin{array}{cccc}
\phi :&1\otimes \mathcal{O}_M(U)&\longrightarrow &\mathcal{O}_M(U)[y_i^{\alpha}\mbox{, } 1\leq i\leq m\mbox{, }\alpha\in \mathbb{N}^m\mbox{, }1\leq\vert\alpha\vert]\\
&1\otimes P&\longmapsto &P
\end{array}
\]
Nous allons le prolonger en utilisant la structure de $\mathbb{C}[\partial_1,\ldots,\partial_m]$- module puis en utilisant la structure d'anneau de l'espace d'arriv\'e. 
L'anneau $\mathbb{C}[\partial_1,\ldots,\partial_m]$ agit sur $1\otimes \mathcal{O}_M(U)$ par multiplication sur le premier facteur. Cette application se prolonge en un morphisme de $\mathbb{C}[\partial_1,\ldots,\partial_m]$-module 
\[
\mathbb{C}[\partial_1,\ldots,\partial_m]\otimes \mathcal{O}_M(U)\longrightarrow \mathcal{O}_M(U)[y_i^{\alpha}\mbox{, } 1\leq i\leq m\mbox{, }\alpha\in \mathbb{N}^m\mbox{, }1\leq\vert\alpha\vert]
\]
Nous en d\'eduisons un morphisme d'alg\`ebre
\[
Sym(\mathbb{C}[\partial_1,\ldots,\partial_m]\otimes \mathcal{O}_M(U))\longrightarrow \mathcal{O}_M(U)[y_i^{\alpha}\mbox{, } 1\leq i\leq m\mbox{, }\alpha\in \mathbb{N}^m\mbox{, }1\leq\vert\alpha\vert]
\]
%
 Il est encore compatible aux actions des op\'erateurs diff\'erentiels de $M$ car elles doivent toutes deux v\'erifier la r\`egle de Leibniz. Le noyau du morphisme est donc invariant sous cette action. Ce noyau contient les \'el\'ements $(1\otimes f)(1\otimes g)-(1\otimes fg)$, $(f,g)\in \mathcal{O}_M(U)^2$. Il contient l'id\'eal $\mathcal{L}$. Le morphisme passe au quotient par cet id\'eal.
 
 Montrons que c'est un isomorphisme. Il est surjectif. Montrons l'injectivit\'e. Par respect de la r\`egle de Leibniz, un \'el\'ement de $\mathcal{O}_{RM}(U)$ s'\'ecrit 
 \[
 P=\displaystyle{\underset{i,\alpha,\ell}{\sum}}c_{i,\alpha,\ell}(\partial^{\alpha_1}\otimes y_{i_1})^{\ell_1}\ldots(\partial^{\alpha_n}\otimes y_{i_n})^{\ell_n}
 \]
  pour $c_{i,\alpha,\ell}\in\mathbb{C}$. Il est envoy\'e par $\phi$ sur $\displaystyle{\underset{i,\alpha,\ell}{\sum}}c_{i,\alpha,\ell}(y_{i_1}^{\alpha_1})^{\ell_1}\ldots (y_{i_n}^{\alpha_n})^{\ell_n}$. Si $P$ est dans le noyau de $\phi$ alors pour tout $i,\alpha,\ell$, $c_{i,\alpha,\ell}=0$ puisque la famille $\{(y_{i_1}^{\alpha_1})^{\ell_1}\ldots (y_{i_n}^{\alpha_n})^{\ell_n}\}$ est libre. Donc $P=0$ et $\phi$ est injectif. Apr\`es inversion du d\'eterminant jacobien dans les anneaux de d\'epart et d'arriv\'ee, nous obtenons l'isomorphisme escompt\'e.

La restriction de $\phi$ \`a $\mathcal{O}_{R_kM}(U)$ donne la deuxi\`eme assertion.
\end{proof}

\subsubsection{Structure de fibr\'e principal}

Notons $\Gamma$ le groupe des applications formelles inversibles de $(\mathbb{C}^m,0)$ dans $(\mathbb{C}^m,0)$. Pour $k\in\mathbb{N},$ notons $\Gamma_k$ le groupe des jets d'ordre $k$ des \'el\'ements de $\Gamma$. Le groupe $\Gamma$, resp. $\Gamma_k$, agit sur $R(M/S)$, resp. $R_k(M/S)$, par composition \`a la source. Ces actions:
\[
\begin{array}{ccc}
 			R(M/S)\times \Gamma &\longrightarrow & R(M/S)\\
			(r,\gamma)&\longmapsto & r\circ\gamma
\end{array}
\qquad \mbox{} \qquad
\begin{array}{cccc}
			R_k(M/S)\times \Gamma_k&\longrightarrow & R_k(M/S)\\
			(j_k(r),j_k(\gamma))&\longmapsto & j_k(r\circ\gamma)
\end{array}
\]
sont not\'ees $S_\gamma(r):=r\circ\gamma$ et $S_{j_k\gamma}(j_kr):=j_k(r\circ\gamma)$.

\begin{lemma}
Les espaces des $S$-rep\`eres d'ordre $k\in\mathbb{N}$ sont des fibr\'es principaux. Autrement dit, les applications
\[
\begin{array}{ccc}
			R_k(M/S)\times \Gamma_k&\longrightarrow & R_k(M/S)\times_M R_k(M/S)\\
			(j_k(r),j_k(\gamma))&\longmapsto & \left(j_k(r),j_k(r\circ\gamma)\right)
\end{array}
\]
sont des isomorphismes. Ces fibr\'es principaux sont localement triviaux pour la topologie de Zariski.
\end{lemma}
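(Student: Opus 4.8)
The plan is to work on a chart $U$ of $M/S$ and to prove that the \emph{division map}
\[
\Psi : R_k(U/S)\times\Gamma_k \longrightarrow R_k(U/S)\times_U R_k(U/S), \qquad (j_k r, j_k\gamma)\longmapsto \bigl(j_k r,\, j_k(r\circ\gamma)\bigr)
\]
is an isomorphism of affine varieties; Zariski-local triviality will then follow by exhibiting an algebraic section over $U$. By Lemme \ref{lemreperelocglob}, a point of $R_k(U/S)$ lying over $p=(p',q)$ is recorded by its Taylor data $(r_i^\alpha)_{1\le i\le m,\,1\le|\alpha|\le k}$ subject to $\det(r_i^{1_j})\neq 0$, while $\Gamma_k$ is the affine algebraic group of jets $(\gamma_j^\beta)_{1\le|\beta|\le k}$ with $\det(\gamma_j^{1_\ell})\neq 0$. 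That $\Psi$ is a morphism is immediate, since the $k$-jet of $r\circ\gamma$ is obtained from those of $r$ and $\gamma$ by the Fa\`a di Bruno formula, whose coefficients are universal polynomials in the $r_i^\alpha$ and the $\gamma_j^\beta$.

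The heart of the argument is to build the inverse morphism, that is, to solve $r\circ\gamma=r'$ for $\gamma$ when $r$ and $r'$ share the base point $p$. I would argue by induction on the order $|\beta|$. At first order the chain rule gives $D_0 r\cdot D_0\gamma=D_0 r'$, whence
\[
\bigl(\gamma_j^{1_\ell}\bigr)=\bigl(r_i^{1_j}\bigr)^{-1}\bigl((r')_i^{1_\ell}\bigr),
\]
and this is exactly where the localization by $Jac$ intervenes: the matrix $(r_i^{1_j})$ is invertible precisely because its determinant has been inverted in the structure sheaf. For the inductive step, in the Fa\`a di Bruno expansion of $(r\circ\gamma)_i^\beta$ with $|\beta|=n\ge 2$ I would single out the unique term of top order in $\gamma$, namely $\sum_j r_i^{1_j}\gamma_j^\beta$; every other contribution groups the $n$ derivatives into at least two blocks and therefore involves only coefficients $\gamma_j^{\beta'}$ with $|\beta'|<n$. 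Hence the equation $(r\circ\gamma)_i^\beta=(r')_i^\beta$ reads $(r_i^{1_j})(\gamma_j^\beta)=(r')_i^\beta-P$, where $P$ is a polynomial in the lower-order data, and applying $(r_i^{1_j})^{-1}$ expresses $\gamma_j^\beta$ as a polynomial in the $r_i^\alpha$, the $(r')_i^\alpha$ and $1/\det(r_i^{1_j})$. This simultaneously proves existence and uniqueness of $\gamma$ and shows that $(j_k r, j_k r')\mapsto (j_k r, j_k\gamma)$ is a morphism, inverse to $\Psi$.

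The step I expect to be the main obstacle is precisely this combinatorial bookkeeping: one must verify that the coefficient of the top-order derivative of $\gamma$ in the multivariate Fa\`a di Bruno formula is exactly the Jacobian matrix $(r_i^{1_j})$ and that all remaining terms are of strictly lower order in $\gamma$, so that the recursion closes and remains polynomial once the Jacobian is inverted. Granting this, $\Psi$ is an isomorphism, the action $S_{j_k\gamma}$ of $\Gamma_k$ on $R_k(U/S)$ is free, and the division map is an isomorphism, which is the definition of a principal $\Gamma_k$-bundle.

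It remains to obtain Zariski-local triviality, for which the chart $U$ provides a canonical algebraic section. In the coordinates $y_1,\dots,y_m,z_1,\dots,z_d$, let $\sigma(p)$ be the $k$-jet of the fibrewise translation $\e\mapsto(p'+\e,q)$, so that $\sigma(p)_i^{1_j}=\delta_{ij}$ and $\sigma(p)_i^\alpha=0$ for $|\alpha|\ge 2$; its Jacobian is identically $1$, so $\sigma:U\to R_k(U/S)$ is a genuine algebraic section. Composition then yields
\[
U\times\Gamma_k \longrightarrow R_k(U/S), \qquad (p,\gamma)\longmapsto \sigma(p)\circ\gamma,
\]
which is an isomorphism: it is a morphism, and the assignment $r\mapsto(p,\sigma(p)^{-1}\circ r)$, sending a frame $r$ to its base point $p$ and to $\sigma(p)^{-1}\circ r$, is an inverse morphism by the division map applied to the pair $(\sigma(p),r)$, since $\sigma$ is algebraic and $\det(\sigma(p)_i^{1_j})=1$. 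Covering $M$ by such charts gives the asserted local triviality.
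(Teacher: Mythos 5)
Your proof is correct and takes essentially the same route as the paper's, which simply exhibits the inverse map $(j_k(r_1),j_k(r_2))\mapsto (j_k(r_1),j_k(r_1^{-1}\circ r_2))$ on a chart and invokes Fa\`a di Bruno to see that it is polynomial in the $y_i^\alpha$ once $\det(y_i^{1_j})$ is inverted. Your order-by-order resolution of $r\circ\gamma=r'$ is precisely the verification the paper leaves implicit, and the explicit translation section $\sigma$ makes the Zariski-local triviality concrete.
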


\begin{proof}
Les applications r\'eciproques sont donn\'ees par
\[
\begin{array}{ccc}
			 R_k(M/S)\times_M R_k(M/S)&\longrightarrow &R_k(M/S)\times \Gamma_k\\
			 \left(j_k(r_1),j_k(r_2)\right)&\longmapsto &(j_k(r_1),j_k(r_1^{-1}\circ r_2))
\end{array}
\]
Pla\c{c}ons-nous dans les coordonn\'ees donn\'ees par l'ouvert $U\subset M$ du lemme (\ref{lemreperelocglob}) et reprenons les notations de ce lemme. Gr\^{a}ce aux formules de Faa Di Bruno, nous savons que les applications induites sur les anneaux de coordonn\'ees sont polynomiales en les $y_i^\alpha$ pour $1\leq i\leq m,\ \alpha\in\mathbb{N}^m$. La vari\'et\'e $M$ peut \^{e}tre recouverte par de tels ouverts. Ceci conclut la preuve du lemme.
\end{proof}

L'action de $\Gamma$ sur $\mathcal{O}_{R(M/S)}$ peut se d\'ecrire directement en suivant son action sur $\mathbb{C}[\partial_1,\ldots,\partial_m]$. Pour tout $\gamma\in\Gamma$:

\begin{equation}
\label{actiondiff}
\begin{array}{ccc}
\gamma^*:\mathbb{C}[\partial_1,\ldots,\partial_m]&\longrightarrow &\mathbb{C}[\partial_1,\ldots,\partial_m]\\
P & \longmapsto &\gamma^*P\vert_{\e=0}
\end{array}
\end{equation}


\subsection{Le groupo\"{i}de des $S$-automorphismes associ\'e}\label{secapplpartiel}

\subsubsection{D\'efinition du groupo\"{i}de}

\begin{definition}
Un $S$-automorphisme de $p_1\in M$ vers $p_2\in M$, est une application formelle inversible $(M_{\rho(p_1)},p_1)\rightarrow(M_{\rho(p_2)},p_2)$.
Un $S$-automorphisme d'ordre $k$ est le jet d'ordre $k$ d'un $S$-automorphisme.
\end{definition}

Soit $U\subset M\times M$ un voisinage ouvert de $(p_1,p_2)\in M\times M$ qui soit un rev\^{e}tement non ramifi\'e d'un ouvert de $\mathbb{C}^{2(m+d)}$ et qui fasse commuter les diagrammes:
\[
\xymatrix{
    U\ar[r] \ar[d]_{pr_1/pr_2} & \mathbb{C}^{m+d}\times\mathbb{C}^{m+d}\ar[d]^{pr_1/pr_2} \\
    M\ar[r]\ar[d]&\mathbb{C}^{m+d}\ar[d]\\
    S\ar[r]&\mathbb{C}^d
    }
\]
o\`{u} $pr_1$ et $pr_2$ sont les projections respectives sur les premiers et derniers facteurs.
 Il vient un syst\`eme de coordonn\'ees $p_1=(p_1',q_1)\in \mathbb{C}^m\times\mathbb{C}^d,\ p_2=(p_2',q_2)\in \mathbb{C}^m\times\mathbb{C}^d$ dans lequel un $S$-automorphisme s'\'ecrit: 
\[
\phi(p_1'+(\e_1,\ldots,\e_m),q_1)=\left(\underset{\alpha\in \mathbb{N}^m}{\sum}\phi_1^{\alpha}\frac{\epsilon^{\alpha}}{\alpha !},\ldots,\underset{\alpha\in \mathbb{N}^m}{\sum}\phi_m^{\alpha}\frac{\epsilon^{\alpha}}{\alpha !},q_2\right)
\]
o\`{u}, pour $1\leq i\leq m$, $\alpha\in \mathbb{N}^m$, $\phi_i^{\alpha}\in \mathbb{C}$. La s\'erie tronqu\'ee \`a l'ordre $k$ est un $S$-rep\`ere d'ordre $k$.

\begin{definition}
L'espace des $S$-automorphismes est $Aut(M/S)$ et l'espace des $S$-automorphismes d'ordre $k\in\mathbb{N}$ est $Aut_k(M/S)$.  
\end{definition}

Par d\'efinition, l'espace des $S$-automorphismes est la limite projective des espaces des $S$-automorphismes d'ordre $k\in\mathbb{N}$.

Ces espaces sont des groupo\"{i}des o\`{u}:
\begin{itemize}
\item la base est $M$
\item les applications source et but sont:
\[
\begin{array}{cccc}
s:&Aut(M/S)&\longrightarrow &M\\
&(M_{\rho(p_1)},p_1)\rightarrow(M_{\rho(p_2)},p_2)&\longmapsto &p_1
\end{array}
\]
\[
\begin{array}{cccc}
t:&Aut(M/S)&\longrightarrow &M\\
&(M_{\rho(p_1)},p_1)\rightarrow(M_{\rho(p_2)},p_2)&\longmapsto &p_2
\end{array}
\]
\item l'identit\'e est
\[
\begin{array}{cccc}
e:&M&\longrightarrow &Aut(M/S)\\
&p&\longmapsto &id:(M_{\rho(p)},p)\rightarrow(M_{\rho(p)},p)
\end{array}
\]
\item en notant $Aut(M/S)\underset{{}^s\!M^t}{\times}Aut(M/S)$ le produit fibr\'e de $Aut(M/S)\overset{s}{\rightarrow }M$ et $Aut(M/S)\overset{t}{\rightarrow}M$, la composition partielle est 
\[
\begin{array}{cccc}
m :&Aut(M/S)\underset{{}^s\!M^t}{\times}Aut(M/S)&\rightarrow &Aut(M/S)\\
&(\phi_1,\phi_2)&\mapsto &\phi_1\circ \phi_2
\end{array}
\]

\item l'inverse est
\[
\begin{array}{cccc}
inv :&Aut(M/S)&\rightarrow &Aut(M/S)\\
&\phi&\mapsto &\phi^{-1}
\end{array}
\]
\end{itemize}

\subsubsection{Structure alg\'ebrique}\label{structalg}

La description locale de la structure alg\'ebrique de $Aut(M/S)$ se fait de la mani\`ere suivante.

Reprenons un ouvert $U\subset M\times M$ comme ci-dessus avec pour coordonn\'ees $x_1,\ldots,x_m,z_1,\ldots,z_d $, $y_1,\ldots y_m,t_1,\ldots,t_d$. L'ensemble des $S$-automorphismes d'ordre $k\in\mathbb{N}$ sur $U$ est muni d'une structure de vari\'et\'e affine dont l'anneau de coordonn\'ees est 
\[
\mathcal{O}_{M\times M}(U)[y_i^{\alpha}\ 1\leq i\leq m\ \alpha\in \mathbb{N}^m\ 1\leq\vert\alpha\vert\leq k][1/det(y_i^{1_j})]
\]
o\`{u} $y_i^\alpha$ la fonction d\'efinie par $y_i^\alpha (\phi) = \frac{\partial^\alpha \phi_i}{\partial x^\alpha}(p'_1)$. L'anneau de coordonn\'ees de la limite projective $Aut(M/S)\vert_U$ est 
\[
\mathcal{O}_{M\times M}(U)[y_i^{\alpha}\ 1\leq i\leq m\ \alpha\in \mathbb{N}^m][1/det(y_i^{1_j})]
\]
Notons $\mathcal{D}_{M/S}$ le $\mathcal{O}_M$-module des op\'erateurs diff\'erentiels sur $M$ $\mathcal{O}_S$-lin\'eaire et $pr_1:M\times M\rightarrow M$ la projection sur le premier facteur. Dans les coordonn\'ees que nous nous sommes donn\'ees, $\mathcal{D}_{M/S}(pr_1(U))$ est engendr\'e par $\mathcal{O}_M(pr_1(U))$ et les d\'erivations $\frac{\partial}{\partial x_1},\ldots,\frac{\partial}{\partial x_m}$. Ces d\'erivations agissent sur l'anneau de coordonn\'ees de $Aut(M/S)\vert_U$ de la mani\`ere suivante: $\frac{\partial}{\partial x_i}x_j=\delta_{ij},\ \frac{\partial}{\partial x_i}z_j=0,\ \frac{\partial}{\partial x_i}y_j^\alpha=y_j^{\alpha+1_j},\ \frac{\partial}{\partial x_i}t_j=0$. Ceci donne une structure de $\mathcal{D}_{M/S}(pr_1(U))$-alg\`ebre diff\'erentielle \`a cet anneau. 

Passons \`a la description globale de la structure alg\'ebrique de $Aut(M/S)$. Cette structure est enti\`erement d\'etermin\'ee par son image directe sur $M\times M$ par $Aut(M/S)\overset{s\times t}{\rightarrow} M\times M$. Par abus, nous parlerons indiff\'eremment de la structure alg\'ebrique de $Aut(M/S)$ et de son image directe sur $M\times M$.

Notons $Jac$ le faisceau en id\'eaux engendr\'e par $\det(y_i^{1_j})$ o\`{u} $y_1,\ldots,y_m$ sont des coordonn\'ees locales des fibres de $\rho\circ pr_2$ sur $U\subset M\times M$. 
 
\begin{definition}
Le faisceau $\mathcal{O}_{Aut(M/S)}$ des coordonn\'ees de l'espace des $S$-automorphismes est:
\[
\left(Sym(pr_1^*\mathcal{D}_{M/S}\otimes pr_2^*\mathcal{O}_M)/\mathcal{L}\right)[1/Jac]
\]
o\`{u} 
\begin{itemize}
\item $\mathcal{D}_{M/S}$ est le $\mathcal{O}_M$-module des op\'erateurs diff\'erentiels sur $M$ $\mathcal{O}_S$-lin\'eaire
\item $pr_1$ et $pr_2$ sont les projections sur le premier et le deuxi\`eme facteur de $M\times M$ 
\item $pr_1^*\mathcal{D}_{M/S}\otimes pr_2^*\mathcal{O}_M$ est un produit tensoriel d'espace vectoriel,
\item $Sym(pr_1^*\mathcal{D}_{M/S}\otimes pr_2^*\mathcal{O}_M)$ est l'alg\`ebre sym\'etrique engendr\'ee,
\item cette alg\`ebre a une structure de $\mathcal{D}_{M/S}$-alg\`ebre diff\'erentielle,
\item elle est quotient\'ee par l'id\'eal diff\'erentiel $\mathcal{L}$ engendr\'e par le noyau de $Sym(pr_1^*\mathcal{O}_M\otimes pr_2^*\mathcal{O}_M)\rightarrow \mathcal{O}_{M\times M}$,
\item elle est enfin localis\'ee en $Jac$
\end{itemize}
Le faisceau $\mathcal{O}_{Aut_k(M/S)}$ des coordonn\'ees de l'espace des $S$-automorphismes d'ordre $k\in\mathbb{N}$ est:
\[
\left(Sym(pr_1^*\mathcal{D}_{M/S}^{\leq k}\otimes pr_2^*\mathcal{O}_M)/(\mathcal{L}\cap Sym(pr_1^*\mathcal{D}_{M/S}^{\leq k}\otimes pr_2^*\mathcal{O}_M))\right)[1/Jac]
\]
o\`{u} $\mathcal{D}_{M/S}^{\leq k}$ d\'esigne l'ensemble des op\'erateurs diff\'erentiels d'ordre au plus $k$.
\end{definition}

L'espace des $S$-automorphismes d'ordre $k\in\mathbb{N}$ devient un groupo\"{i}de alg\'ebrique. Nous retrouvons la structure locale. Soit $U\subset M\times_SM$ un rev\^{e}tement non-ramifi\'e d'un ouvert de $\mathbb{C}^{2(m+d)}$ comme ci-dessus de coordonn\'ees  $x_1,\ldots,x_m,z_1,\ldots,z_d,y_1,\ldots,y_m,t_1,\ldots,t_d$:

\begin{lemma}\label{lemautloc}
Nous avons un isomorphisme:
\[
\mathcal{O}_{Aut(M/S)}(U)\simeq\mathcal{O}_{M\times M}(U)[y_i
^{\alpha}\mbox{, } 1\leq i\leq n\mbox{, }\alpha\in \mathbb{N}^m\mbox{, }1\leq\vert\alpha\vert][1/det(y_i^{1_j})]
\]
 qui est compatible avec l'action des op\'erateurs diff\'erentiels. Sa restriction induit:
 \[
 \mathcal{O}_{Aut_k(M/S)}(U)\simeq\mathcal{O}_{M\times M}(U)[y_i^{\alpha}\mbox{, } 1\leq i\leq m\mbox{, }\alpha\in \mathbb{N}^m\mbox{, }1\leq\vert\alpha\vert\leq k][1/det(y_i^{1_j})]
 \]
\end{lemma}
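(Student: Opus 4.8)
L'approche consiste à décalquer la démonstration du lemme \ref{lemreperelocglob}, en remplaçant le module $\mathbb{C}[\partial_1,\ldots,\partial_m]\otimes\mathcal{O}_M$ par $pr_1^*\mathcal{D}_{M/S}\otimes pr_2^*\mathcal{O}_M$. Je commencerais par construire le morphisme. Dans les coordonnées de $U$, l'anneau cible $\mathcal{O}_{M\times M}(U)[y_i^\alpha,\ 1\leq i\leq m,\ \alpha\in\mathbb{N}^m,\ 1\leq|\alpha|]$ est naturellement une $\mathcal{D}_{M/S}(pr_1(U))$-algèbre différentielle, l'action étant donnée par $\frac{\partial}{\partial x_i}y_j^\alpha=y_j^{\alpha+1_i}$, $\frac{\partial}{\partial x_i}x_j=\delta_{ij}$ et $\frac{\partial}{\partial x_i}z_j=\frac{\partial}{\partial x_i}t_j=0$. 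Partant de l'inclusion canonique $\psi:1\otimes pr_2^*\mathcal{O}_M(U)\to\mathcal{O}_{M\times M}(U)[y_i^\alpha,\ldots]$ qui envoie $1\otimes g$ sur la fonction $g$ exprimée dans les coordonnées $y_1,\ldots,y_m,t_1,\ldots,t_d$ de la seconde composante (c'est-à-dire via $y_i\mapsto y_i^0$), je la prolongerais grâce à la structure de $\mathcal{D}_{M/S}$-module en $pr_1^*\mathcal{D}_{M/S}\otimes pr_2^*\mathcal{O}_M\to\mathcal{O}_{M\times M}(U)[y_i^\alpha,\ldots]$, $D\otimes g\mapsto D\cdot\psi(1\otimes g)$, puis, par propriété universelle de l'algèbre symétrique, en un morphisme d'algèbres $\phi$ sur $Sym(pr_1^*\mathcal{D}_{M/S}\otimes pr_2^*\mathcal{O}_M)$.

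Je vérifierais ensuite la compatibilité de $\phi$ aux actions des opérateurs différentiels : les deux membres étant des $\mathcal{D}_{M/S}$-algèbres différentielles et $\phi$ respectant la règle de Leibniz par construction, le noyau de $\phi$ est un idéal différentiel. Comme il contient les éléments $(1\otimes f)(1\otimes g)-(1\otimes fg)$ ainsi que le noyau de $Sym(pr_1^*\mathcal{O}_M\otimes pr_2^*\mathcal{O}_M)\to\mathcal{O}_{M\times M}$, il contient l'idéal $\mathcal{L}$. Le morphisme passe donc au quotient, fournissant $\mathcal{O}_{Aut(M/S)}(U)\to\mathcal{O}_{M\times M}(U)[y_i^\alpha,\ldots]$ avant localisation.

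Pour établir que c'est un isomorphisme, la surjectivité est immédiate puisque tous les générateurs $x_j,z_j,y_j,t_j$ et $y_i^\alpha$ sont atteints. Pour l'injectivité, j'utiliserais le fait que $\mathcal{D}_{M/S}$ est engendré comme $\mathcal{O}_M$-module par les monômes $\partial^\alpha=\frac{\partial^{|\alpha|}}{\partial x^\alpha}$ : modulo $\mathcal{L}$, qui identifie la partie d'ordre zéro à $\mathcal{O}_{M\times M}$, et en respectant la règle de Leibniz, tout élément du quotient s'écrit comme combinaison $\mathcal{O}_{M\times M}(U)$-linéaire de monômes $(\partial^{\alpha_1}\otimes y_{i_1})^{\ell_1}\cdots(\partial^{\alpha_n}\otimes y_{i_n})^{\ell_n}$, d'image $(y_{i_1}^{\alpha_1})^{\ell_1}\cdots(y_{i_n}^{\alpha_n})^{\ell_n}$. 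La liberté de cette famille sur $\mathcal{O}_{M\times M}(U)$ force la nullité de tous les coefficients, d'où l'injectivité. Après inversion de $\det(y_i^{1_j})$ dans les deux anneaux, on obtient l'isomorphisme annoncé, et sa restriction aux opérateurs d'ordre $\leq k$ donne la seconde assertion.

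L'obstacle principal me semble être le passage du module libre $\mathbb{C}[\partial_1,\ldots,\partial_m]\otimes\mathcal{O}_M$ de la situation des repères au module $pr_1^*\mathcal{D}_{M/S}\otimes pr_2^*\mathcal{O}_M$, qui fait intervenir l'anneau non commutatif $\mathcal{D}_{M/S}$. Il faudra justifier soigneusement qu'une base de type Poincaré--Birkhoff--Witt de $\mathcal{D}_{M/S}$ sur $\mathcal{O}_M$ permet de réduire les générateurs $D\otimes g$ aux seuls $\partial^\alpha\otimes y_i$ --- les coefficients provenant de $pr_1^*\mathcal{O}_M$ venant se placer dans $\mathcal{O}_{M\times M}(U)$ --- de sorte que les images $y_i^\alpha$ forment bien une famille algébriquement indépendante sur $\mathcal{O}_{M\times M}(U)$.
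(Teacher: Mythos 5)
Votre démonstration est correcte et suit essentiellement la même voie que celle du papier, qui se contente d'ailleurs de renvoyer au lemme \ref{lemreperelocglob} et de décalquer sa preuve : construction du morphisme par extension de l'injection canonique via la structure de $\mathcal{D}_{M/S}(pr_1(U))$-module puis la propriété universelle de l'algèbre symétrique, passage au quotient par $\mathcal{L}$, et identification du noyau par liberté de la famille des monômes en les $y_i^\alpha$. Votre remarque finale sur la base de type Poincaré--Birkhoff--Witt de $\mathcal{D}_{M/S}$ sur $\mathcal{O}_M$ explicite utilement un point que le papier laisse implicite, mais ne change pas la nature de l'argument.
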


\begin{proof}
La preuve est la m\^{e}me que celle du lemme (\ref{lemreperelocglob}).
Commen\c{c}ons par d\'efinir un morphisme d'espace vectoriel \`a partir de l'injection canonique:
\[
\begin{array}{cccc}
\phi :&pr_1^*\mathcal{O}_{M}\otimes pr_2^*\mathcal{O}_M(U)&\longrightarrow &\mathcal{O}_{M\times M}(U)[y_i^{\alpha}\mbox{, } 1\leq i\leq m\mbox{, }\alpha\in \mathbb{N}^m\mbox{, }1\leq\vert\alpha\vert]\\
&f\otimes g&\longmapsto &f\otimes g 
\end{array}
\]
Nous allons le prolonger en utilisant la structure de $\mathcal{D}_{M/S}(pr_1(U))$- module puis en utilisant la structure d'anneau de l'espace d'arriv\'e. 
Puisque $\mathcal{D}_{M/S}(pr_1(U))$ agit sur $pr_1^*\mathcal{O}_{M}\otimes pr_2^*\mathcal{O}_M(U)$, cette application se prolonge en un morphisme de $\mathcal{D}_{M/S}(pr_1(U))$-module 
\[
pr_1^*\mathcal{D}_{M/S}\otimes pr_2^*\mathcal{O}_M(U)\to \mathcal{O}_{M\times M}(U)[y_i^{\alpha}\mbox{, } 1\leq i\leq m\mbox{, }\alpha\in \mathbb{N}^m\mbox{, }1\leq\vert\alpha\vert]
\]
Nous en d\'eduisons un morphisme d'alg\`ebre
\[
Sym(pr_1^*\mathcal{D}_{M/S}\otimes pr_2^*\mathcal{O}_M)(U)\to \mathcal{O}_{M\times M}(U)[y_i^{\alpha}\mbox{, } 1\leq i\leq m\mbox{, }\alpha\in \mathbb{N}^m\mbox{, }1\leq\vert\alpha\vert]
\]
%
 Il est encore compatible aux actions des op\'erateurs diff\'erentiels de $M$ car elles doivent toutes deux v\'erifier la r\`egle de Leibniz. Le noyau du morphisme est donc invariant sous cette action. 
 
 On v\'erifie comme en \ref{lemreperelocglob} que le noyau de ce morphisme est l'id\'eal diff\'erentiel engendr\'e par le noyau de $Sym(pr_1^*\mathcal{O}_M\otimes pr_2^*\mathcal{O}_M)\rightarrow \mathcal{O}_{M\times M}$. 


La restriction de $\phi$ \`a $\mathcal{O}_{Aut_k(M/S)}$ donne la deuxi\`eme assertion. 
\end{proof}

Pour $k\in\mathbb{N}$, le groupe $\Gamma_k$ agit diagonalement sur le produit $R_k(M/S)\times R_k(M/S)$. Le quotient $\left(R_k(M/S)\times R_k(M/S)\right)/\Gamma_k$ h\'erite de la structure alg\'ebrique de $Aut_k(M/S)$. Ceci est donn\'e par le lemme:

\begin{lemma}\label{quotientalg}
Soit $k\in\mathbb{N}$. L'application 
\[
\begin{array}{cccc}
\Phi_k : & R_k(M/S)\times R_k(M/S)&\longrightarrow & Aut_k(M/S)\\
&(j_k(r),j_k(s))&\longmapsto & j_k(s\circ r^{-1})
\end{array}
\]
est un morphisme surjectif. Les fibres sont les orbites sous l'action diagonale du groupe $\Gamma_k$.
\end{lemma}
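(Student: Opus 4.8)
Le plan est de traiter successivement les trois assertions : $\Phi_k$ est un morphisme, il est surjectif, et ses fibres sont exactement les orbites de l'action diagonale de $\Gamma_k$.

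Pour montrer que $\Phi_k$ est un morphisme, je me placerais dans les coordonnées locales d'une carte $U\subset M\times M$ de $M/S$ et les coordonnées correspondantes sur $R_k(M/S)\times R_k(M/S)$. La partie ``points base'' de $\Phi_k$ est immédiate : la source de $s\circ r^{-1}$ est $r(0)$ et son but est $s(0)$, c'est-à-dire que $(s\times t)\circ\Phi_k$ est la projection naturelle de $R_k(M/S)\times R_k(M/S)$ sur $M\times M$. Pour la partie ``jets'', les coordonnées $\phi_i^\alpha$ de $s\circ r^{-1}$ s'expriment, via les formules de Faà di Bruno, polynomialement en les coefficients de $s$ et en ceux de $r^{-1}$ ; ces derniers s'obtiennent à leur tour polynomialement en les $r_i^\alpha$ après inversion de $jac(r)=\det(r_i^{1_j})$. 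Comme $jac$ est inversé dans l'anneau de coordonnées de $R_k(M/S)$, ceci définit bien un morphisme, exactement comme dans la preuve établissant la structure de fibré principal.

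Pour la surjectivité, il suffit d'exhiber un antécédent de tout $S$-automorphisme $\phi$ d'ordre $k$ de $p_1$ vers $p_2$. Dans une carte de $M/S$ en $p_1$, le repère coordonné $r_0(\e)=(p_1'+\e,q_1)$ est un $S$-repère en $p_1$ : sa jacobienne en $0$ est l'identité et $\rho\circ r_0$ est constante. Posant $s:=\phi\circ r_0$, on vérifie que $s$ est un $S$-repère en $p_2$, car sa jacobienne $D_{p_1}\phi\cdot D_0 r_0$ est de rang $m$ et $\rho\circ s=\rho\circ\phi\circ r_0$ est constante égale à $\rho(p_2)$ puisque $\phi$ est à valeurs dans la fibre $M_{\rho(p_2)}$. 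On a alors $\Phi_k(j_kr_0,j_ks)=\phi$, ce qui donne la surjectivité sur les points.

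Pour identifier les fibres aux orbites, l'inclusion des orbites dans les fibres résulte du calcul direct $\Phi_k(r\circ\gamma,s\circ\gamma)=(s\circ\gamma)\circ(r\circ\gamma)^{-1}=s\circ r^{-1}$ valable pour tout $\gamma\in\Gamma_k$. Réciproquement, supposons $s_1\circ r_1^{-1}=s_2\circ r_2^{-1}$ dans $Aut_k(M/S)$. L'égalité des sources et des buts impose $r_1(0)=r_2(0)$ et $s_1(0)=s_2(0)$, de sorte que $r_2^{-1}\circ r_1$ et $s_2^{-1}\circ s_1$ sont des jets d'ordre $k$ d'applications de $(\mathbb{C}^m,0)$ dans lui-même ; en composant l'égalité à gauche par $s_2^{-1}$ et à droite par $r_1$ on obtient $s_2^{-1}\circ s_1=r_2^{-1}\circ r_1=:\gamma$. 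Ce $\gamma$ est inversible comme composée de jets inversibles, donc appartient à $\Gamma_k$, et l'on a $r_1=r_2\circ\gamma$, $s_1=s_2\circ\gamma$ : ainsi $(r_1,s_1)$ est dans l'orbite diagonale de $(r_2,s_2)$. Le point le plus technique est l'algébricité de $\Phi_k$, mais il se ramène intégralement aux formules de Faà di Bruno et à l'inversion des jets déjà établies ; le reste n'est que manipulation formelle de jets, l'observation clé étant que l'égalité de deux automorphismes produit un unique élément $\gamma$ de $\Gamma_k$ lu simultanément sur les deux repères.
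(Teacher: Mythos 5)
Votre preuve est correcte et suit essentiellement la même démarche que celle de l'article : algébricité via les formules de Faà di Bruno, surjectivité par le choix d'un repère composé avec $\phi$, et identification des fibres aux orbites en posant $\gamma=s_2^{-1}\circ s_1=r_2^{-1}\circ r_1$. Vous explicitez simplement quelques vérifications (appartenance de $\gamma$ à $\Gamma_k$, compatibilité des points base) que l'article laisse implicites.
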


\begin{proof}
Nous savons que l'application est un morphisme gr\^{a}ce aux formules de Faa Di Bruno. Ce morphisme est surjectif puisque pour $\phi\in Aut(M/S)$, $s\in R(M/S)$, $j_k(\phi)=j_k(s\circ (s^{-1}\circ\phi))$.

Les fibres contiennent les orbites:  pour $(s,r)\in R(M/S)^2$, $\gamma\in\Gamma$, $j_k(s\circ r^{-1})=j_k(s\circ\gamma\circ\gamma^{-1}\circ r^{-1})$. 
Les orbites contiennent les fibres: soit $(r,r',s,s')\in R(M/S)^4$ tels que $j_k(s\circ r^{-1})=j_k(s'\circ r'^{-1})$. Posons $j_k(\gamma)=j_k(s^{-1}\circ s')=j_k(r^{-1}\circ r')$. Nous avons $j_k(r')=j_k(r\circ\gamma)$ et $j_k(s')=j_k(s\circ\gamma)$.
\end{proof}

Sur le produit $R_k(M/S)\times R_k(M/S)$ sont d\'efinies les applications
\begin{itemize}
\item identit\'e:
\[
\begin{array}{cccc}
e:&R_k(M/S)&\longrightarrow &R_k(M/S)\times R_k(M/S)\\
&j_k(r)&\longmapsto &(j_k(r),j_k(r))
\end{array}
\]
\item composition partielle:
\[
\begin{array}{cccc}
m :&R_k(M/S)\times R_k(M/S)\underset{{}^{pr_1}\!M^{pr_2}}{\times}R_k(M/S)\times R_k(M/S)&\longrightarrow &R_k(M/S)\times R_k(M/S)\\
&(j_k(r_1),j_k(r_2)),(j_k(r_3),j_k(r_1))&\longmapsto &(j_k(r_3),j_k(r_2))
\end{array}
\]

\item inverse
\[
\begin{array}{cccc}
inv :&R_k(M/S)\times R_k(M/S)&\longrightarrow &R_k(M/S)\times R_k(M/S)\\
&(j_k(r_1),j_k(r_2))&\longmapsto &(j_k(r_2),j_k(r_1))
\end{array}
\]
\end{itemize}
Le morphisme $\Phi_k$ donne une correspondance entre les relations d'\'equivalence de $R_k(M/S)\times R_k(M/S)$ (\textit{i.e.} les vari\'et\'es stables par ces trois applications) qui sont stables sous l'action diagonale du groupe $\Gamma_k$ et les sous-groupo\"{i}des alg\'ebriques de $Aut_k(M/S)$.

Nous terminons cette sous-section en comparant les structures diff\'erentielles du produit $R(M/S)\times R(M/S)$ et $Aut(M/S)$. Notons $\Phi$ l'application quotient envoyant l'un sur l'autre. 
\begin{lemma}\label{lemdiff}
Un $I$ un id\'eal de $\mathcal{O}_{Aut(M/S)}$ est $\mathcal{D}_{M/S}$-invariant si et seulement si $\Phi^*I\subset \mathcal{O}_{R(M/S)\times R(M/S)}$ est $(\partial_i\otimes 1+1\otimes\partial_i)$-invariant pour tout $1\leq i\leq m$.
\end{lemma}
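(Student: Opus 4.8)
The plan is to reduce the statement to a single commutation relation between the ``source derivations'' on the two spaces, and then to transport invariance of ideals across $\Phi$.

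First I would reduce the left-hand condition to the generating derivations. Locally on a chart, $\mathcal{D}_{M/S}$ is generated, as a sheaf of algebras, by $\mathcal{O}_M$ and the relative derivations $\frac{\partial}{\partial x_1},\ldots,\frac{\partial}{\partial x_m}$. Since $I$ is an ideal it is automatically stable under multiplication by $\mathcal{O}_M$, and if two operators preserve $I$ then so does their composite; hence $I$ is $\mathcal{D}_{M/S}$-invariant if and only if $\frac{\partial}{\partial x_i}I\subseteq I$ for all $i$. It therefore suffices to compare, through $\Phi^*$, the families of derivations $\{\frac{\partial}{\partial x_i}\}$ on $\mathcal{O}_{Aut(M/S)}$ and $\{\partial_i\otimes 1+1\otimes\partial_i\}$ on $\mathcal{O}_{R(M/S)\times R(M/S)}$.

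The heart of the argument is the identity
\[
(\partial_i\otimes 1+1\otimes\partial_i)\circ\Phi^*=\sum_{j=1}^m y_j^{1_i}\cdot\Phi^*\circ\frac{\partial}{\partial x_j},
\]
where $y_j^{1_i}$ denotes the coordinate of the first $R$-factor, so that $\big(y_j^{1_i}\big)_{i,j}$ is $D_0 r$ and $\det\big(y_j^{1_i}\big)=jac(r)$. Geometrically this expresses that the diagonal source-translation $(r,s)\mapsto(r(\cdot+te_i),s(\cdot+te_i))$ descends through $\Phi$ to the re-basing of the automorphism $s\circ r^{-1}$ along its own graph, in the direction $D_0r\cdot e_i$; the Jacobian $D_0r$ is precisely the change of variables between the formal source coordinate of the repère and the coordinate $x$ on $M$. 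To prove it rigorously I would work in a chart as in Lemme \ref{lemreperelocglob} and note that both sides are $\mathbb{C}$-linear and satisfy the twisted Leibniz rule $D(fg)=D(f)\,\Phi^*g+\Phi^*f\,D(g)$, so it is enough to verify the identity on the algebra generators $y_j^\alpha$ (and on the base coordinates $x_j,z_j,y_j^0,t_j$), which is a direct application of the Faà di Bruno formulas for $y_j^\alpha(s\circ r^{-1})$. Because $jac(r)$ is invertible on $R(M/S)$, the matrix $\big(y_j^{1_i}\big)$ is invertible over $\mathcal{O}_{R(M/S)\times R(M/S)}$, so the relation inverts to express each $\Phi^*\circ\frac{\partial}{\partial x_j}$ as an $\mathcal{O}_{R(M/S)\times R(M/S)}$-linear combination of the $(\partial_i\otimes 1+1\otimes\partial_i)\circ\Phi^*$.

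With this relation both implications follow. If $\frac{\partial}{\partial x_j}I\subseteq I$ for all $j$, then for a generator $\Phi^*f$ of $\Phi^*I$ (with $f\in I$) the identity gives $(\partial_i\otimes 1+1\otimes\partial_i)(\Phi^*f)=\sum_j y_j^{1_i}\Phi^*(\frac{\partial}{\partial x_j}f)\in\Phi^*I$, and applying the Leibniz rule to an arbitrary element $\sum_k g_k\,\Phi^*f_k$ of $\Phi^*I$ shows $\Phi^*I$ is $(\partial_i\otimes 1+1\otimes\partial_i)$-invariant. Conversely, if $\Phi^*I$ is invariant under all $\partial_i\otimes 1+1\otimes\partial_i$, inverting the relation yields $\Phi^*(\frac{\partial}{\partial x_j}f)\in\Phi^*I$ for every $f\in I$; it then remains to descend this membership to $\mathcal{O}_{Aut(M/S)}$. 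Here I would use that $\Phi$ is the quotient by the diagonal action of $\Gamma$ (Lemme \ref{quotientalg}) and that the frame bundle is Zariski-locally trivial, so that locally there is a section $\sigma$ with $\sigma^*\circ\Phi^*=\mathrm{id}$ (take $\sigma(\phi)=(r_0,\phi\circ r_0)$ for a locally chosen repère $r_0$ at the source). Writing $\Phi^*(\frac{\partial}{\partial x_j}f)=\sum_k h_k\,\Phi^*f_k$ and applying $\sigma^*$ gives $\frac{\partial}{\partial x_j}f=\sum_k\sigma^*(h_k)\,f_k\in I$; since invariance of ideal sheaves is local, this completes the equivalence. I expect the main obstacle to be the clean derivation of the displayed commutation relation, in particular tracking the invertible Jacobian factor $\big(y_j^{1_i}\big)$ produced by the passage from the source coordinate of the repère to the coordinate $x$ on $M$; the descent $\Phi^*g\in\Phi^*I\Rightarrow g\in I$ is comparatively routine once local triviality of $\Phi$ is invoked.
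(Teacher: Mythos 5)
The paper states Lemme \ref{lemdiff} without proof, so there is nothing to compare against; judged on its own, your argument is correct and is surely the intended one. The commutation relation $(\partial_i\otimes 1+1\otimes\partial_i)\circ\Phi^*=\sum_j (y_j^{1_i}\otimes 1)\,\Phi^*\circ\frac{\partial}{\partial x_j}$ is exactly the compatibility between the two differential structures that the paper uses implicitly (e.g.\ in the proof of the lemme \ref{lemidealdiff}), it is correctly verified on generators via the twisted Leibniz rule and Fa\`a di Bruno, the inversion of the Jacobian matrix is legitimate since $\mathcal{O}_{R(M/S)}$ is localized at $Jac$, and the descent $\Phi^*g\in\langle\Phi^*I\rangle\Rightarrow g\in I$ via a Zariski-local section $p\mapsto r_0(p)$ of $R(M/S)\to M$ is sound.
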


\subsection{Prolongements de champs de vecteurs}

Soit $X$ un champ de vecteurs rationnel sur la vari\'et\'e $M$ tangent aux fibres de $\rho$. Ce champ se prolonge naturellement aux espaces des $S$-rep\`eres puis aux espaces des $S$-automorphismes.

\subsubsection{Prolongements aux espaces des $S$-rep\`eres}

D\'ecrivons deux constructions du prolongement du champ $X$.

Une description alg\'ebrique consiste \`a \'etendre la d\'erivation $X$ de $\mathcal{O}_M$ sur $\mathcal{O}_{R(M/S)}$. La d\'erivation $X:\mathcal{O}_M\rightarrow \mathcal{O}_M$ s'\'etend en un endomorphisme $1\otimes X:\mathbb{C}[\partial_1,\ldots,\partial_m]\otimes \mathcal{O}_M\rightarrow \mathbb{C}[\partial_1,\ldots,\partial_m]\otimes \mathcal{O}_M$. Puis en une d\'erivation $1\otimes X:Sym(\mathbb{C}[\partial_1,\ldots,\partial_m]\otimes \mathcal{O}_M)\rightarrow Sym(\mathbb{C}[\partial_1,\ldots,\partial_m]\otimes \mathcal{O}_M)$. On v\'erifie que $1\otimes X(\mathcal{L})\subset \mathcal{L}$, ce qui induit une d\'erivation $RX$ sur $\mathcal{O}_{R(M/S)}$. L'endomorphisme $1\otimes X:\mathbb{C}[\partial_1,\ldots,\partial_m]\otimes \mathcal{O}_M\rightarrow \mathbb{C}[\partial_1,\ldots,\partial_m]\otimes \mathcal{O}_M$ pr\'eserve la filtration par l'ordre des op\'erateurs diff\'erentiels. Ceci implique que $RX$ pr\'eserve $\mathcal{O}_{R_k(M/S)}$. Sa restriction \`a cette sous-alg\`ebre est not\'ee $R_kX$.
Nous voyons dans cette construction que les prolongements commutent \`a l'action de $\mathbb{C}[\partial_1,\ldots,\partial_m]$ d\'efinie dans la sous-section (\ref{ssecalgrep}), \textit{i.e.}  v\'erifient les \'egalit\'es 
\begin{equation}\label{relationderivtot}
R_{k+1}X\circ \partial_i-\partial_i\circ R_kX=0
\end{equation}

Une description analytique consiste \`a prendre le champ tangent aux prolongements des flots du champ $X$ sur $R(M/S)$. Les flots du champ $X$ d\'efinissent des biholomorphismes dans les fibres de $\rho$ sur leurs ouverts de d\'efinition. Un flot au temps $\tau\in\mathbb{C}$ sera not\'e $exp(\tau X)$ ind\'ependamment de son ouvert de d\'efinition. Leurs jets d\'efinissent des $S$-automorphismes. Or l'espace des $S$-automorphismes agit sur l'espace des $S$-rep\`eres par composition au but:
\[
\begin{array}{ccc}
Aut(M/S)\underset{{}^s\!M}{\times} R(M/S)&\longrightarrow & R(M/S)\\
(\phi,r)&\longmapsto & \phi\circ r
\end{array}
\]
 Les prolongements $RX$ et $R_kX$ de $X$ sont d\'efinis sur $R(M/S)$ et $R_k(M/S)$ gr\^{a}ce \`a cette action. Pour $r\in R(M/S)$ tel que $r(0)$ soit dans le domaine de d\'efinition de $X$: 
\[
RX\vert_{r} :=\left.\frac{d}{d\tau}exp(\tau X)\circ r\right\vert_{\tau=0}
\]
\[
 R_kX\vert_{j_kr} :=\left.\frac{d}{d\tau}j_k(exp(\tau X)\circ r)\right\vert_{\tau=0}
\]

\begin{lemma} 
Les deux mani\`eres de prolonger le champ $X$ en $RX$ et en $R_kX$ aux espaces des $S$-rep\`eres $R(M/S)$ et $R_k(M/S)$ d\'ecrites ci-dessus co\"{i}ncident. 
\end{lemma}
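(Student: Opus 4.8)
Le plan est de comparer les deux dérivations sur un système de générateurs de l'algèbre $\mathcal{O}_{R(M/S)}$. Le prolongement analytique est bien une dérivation, puisque c'est le champ de vecteurs associé à un flot sur $R(M/S)$, et il s'étend de façon unique à la localisation par $Jac$. Comme $\mathcal{O}_{R(M/S)}$ est engendrée au-dessus de $\mathcal{O}_M$ par les $y_i^\alpha$ (lemme \ref{lemreperelocglob}), et que la structure de $\mathbb{C}[\partial_1,\ldots,\partial_m]$-module donne $y_i^\alpha=\partial^\alpha y_i^0$ avec $y_i^0\in\mathcal{O}_M$, il suffira de vérifier que les deux prolongements coïncident sur $\mathcal{O}_M$ et commutent aux dérivées totales $\partial_i$.

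Je commencerais par la coïncidence sur $\mathcal{O}_M$. Pour $RX$ algébrique, elle est vraie par construction. Pour $RX$ analytique, l'évaluation d'une fonction de base $f\in\mathcal{O}_M$ en un repère $r$ ne dépend que du point source $r(0)$, et le flot $\exp(\tau X)$ y déplace ce point selon $X$, de sorte que $\left.\frac{d}{d\tau}f(\exp(\tau X)\circ r)\right|_{\tau=0}=X(f)(r(0))$. Les deux dérivations coïncident donc avec $X$ sur $\mathcal{O}_M$; en particulier elles envoient chaque $y_i^0$ sur la $i$-ème composante de $X$ et chaque $z_j$ sur $0$.

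Le cœur de la preuve sera la commutation aux dérivées totales. Pour le prolongement algébrique, c'est la relation (\ref{relationderivtot}). Pour le prolongement analytique, j'interpréterais $\partial_i$ comme la dérivée infinitésimale de l'action par reparamétrage de la source : en notant $\sigma^i_s:\e\mapsto\e+s\,1_i$ la translation de $(\mathbb{C}^m,0)$, on obtient sur les fonctions coordonnées $\partial_i=\left.\frac{d}{ds}\right|_{s=0}(r\mapsto r\circ\sigma^i_s)$. Comme $RX$ est la dérivée de l'action par composition au but $r\mapsto\exp(\tau X)\circ r$, l'associativité de la composition $\exp(\tau X)\circ(r\circ\sigma^i_s)=(\exp(\tau X)\circ r)\circ\sigma^i_s$ montre que ces deux actions à un paramètre commutent, d'où $RX\circ\partial_i=\partial_i\circ RX$. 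La principale difficulté sera de justifier proprement l'échange des deux dérivations $\frac{d}{d\tau}$ et $\frac{d}{ds}$, ce qui résulte de l'analyticité des flots (théorème de Schwarz).

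Il resterait à conclure : coïncidant sur les $y_i^0$ et commutant aux $\partial^\alpha$, les deux prolongements coïncident sur tous les $y_i^\alpha=\partial^\alpha y_i^0$, donc sur toute l'algèbre $\mathcal{O}_{R(M/S)}$. Enfin, l'endomorphisme $1\otimes X$ préservant la filtration par l'ordre des opérateurs, le même raisonnement restreint à $\mathcal{O}_{R_k(M/S)}$ fournit l'égalité des deux constructions de $R_kX$.
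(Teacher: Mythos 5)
Votre preuve est correcte et suit essentiellement la même démarche que celle de l'article : montrer que le prolongement analytique vérifie les relations de commutation (\ref{relationderivtot}) avec les dérivées totales et observer que ces relations, jointes à la coïncidence avec $X$ sur $\mathcal{O}_M$, déterminent entièrement la dérivation. La seule différence est que l'article se contente de renvoyer à Olver pour la commutation, alors que vous en donnez un argument autonome (commutation des actions par reparamétrage à la source et par composition au but), ce qui est tout à fait valable.
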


\begin{proof}
D'apr\`es \cite[pp 117--135]{Olver} la construction analytique v\'erifient aussi les relations (\ref{relationderivtot}) et ces relations d\'efinissent compl\`etement le champ de vecteurs.
\end{proof}

L'action de $\Gamma$ sur l'anneau de coordonn\'ees de $R(M/S)$ provient de son action sur le facteur de gauche dans $\mathbb{C}[\partial_1,\ldots,\partial_m]\otimes \mathcal{O}_M$.  (c.f. \ref{actiondiff}). La d\'erivation $RX$ est induite par un endomorphisme agissant sur le membre de droite du m\^{e}me produit tensoriel. Ces deux action commutent. Autrement dit, pour tout $\gamma\in\Gamma$,

 \begin{equation}\label{equaGinv}
 S_\gamma^*RX=RX\qquad\mbox{et}\qquad \left(S_{j_k(\gamma)}\right)^*R_kX=R_kX 
 \end{equation}

\subsubsection{Prolongements aux espaces des $S$-automorphismes}

Le champ de vecteurs $X$ peut se prolonger aux espaces des $S$-automorphismes par la source ou par le but. Ces prolongements s'expriment \`a l'aide du morphisme $\Phi_k:R_k(M/S)\times R_k(M/S)\rightarrow Aut_k(M/S)$ donn\'e dans le lemme (\ref{quotientalg}):

\begin{definition}
Les prolongements du champ $X$ aux espaces des $S$-automorphismes par la source et par le but sont respectivement:
\[
R_k^sX=d\Phi_k.(R_kX + 0)\qquad
R_k^tX=d\Phi_k.(0 + R_kX)
\]
\end{definition}

Ces champs sont bien d\'efinis puisque les champs $R_kX + 0$ et $0+R_kX $ sont invariants sous l'action diagonale de $\Gamma_k$. 

\section{Le groupo\"ide de Galois}\label{secgrouppart}

  \subsection{Trois d\'efinitions d'un groupo\"{i}de}\label{soussec3def}

Nous pr\'esentons ici trois d\'efinitions d'une sous-vari\'et\'e de $Aut_k(M/S)$ associ\'e au champ $X$. Cette sous-vari\'et\'e est un sous-groupo\"{i}de au-dessus d'un ouvert de $M$. Par abus de langage, nous appellerons sous-groupo\"{i}de ce type de sous-vari\'et\'e. Il est commode d'utiliser la correspondance donn\'ee par le morphisme $\Phi_k$ du lemme (\ref{quotientalg}) entre les relations d'\'equivalence de $R_k(M/S)\times R_k(M/S)$ stables sous l'action diagonale du groupe $\Gamma_k$ et sous-groupo\"{i}des de $Aut_k(M/S)$. Par ailleurs, il sera r\'eguli\`erement n\'ecessaire de se restreindre \`a des ouverts de la base $M$. Si $U$ est un tel ouvert, si $\mathcal{G}$ est une sous-vari\'et\'e de $Aut_k(M/S)$ et si $\mathcal{R}$ est une sous-vari\'et\'e de $R_k(M/S)\times R_k(M/S)$, alors les restrictions respectives au-dessus de cet ouvert seront not\'ees $\mathcal{G}\vert_{U\times U}$ et $\mathcal{R}\vert_{U\times U}$, autrement dit : $\mathcal{G}\vert_{U\times U}:=\mathcal{G}\cap (s\times t)^{-1}(U\times U)$ et $\mathcal{R}\vert_{U\times U}:=\mathcal{R}\cap (pr_1\times pr_2)^{-1}(U\times U)$.

  \subsubsection{D\'efinition alg\'ebrique}
  
 Voici la d\'efinition du groupo\"{i}de en tant que plus petit groupo\"{i}de alg\'ebrique contenant le flot de $X$:
 
\begin{definition}
Le groupo\"{i}de $\mathcal{G}_k^a(X)$, est la plus petite sous-vari\'et\'e de $Aut_k(M/S)$ telle qu'il existe un ouvert $U^a\subset M$ v\'erifiant :

\begin{enumerate}
\item $R_k^tX\subset T\mathcal{G}_k^a(X)$ 
\item la restriction $\mathcal{G}_k^a(X)\vert_{U^a\times U^a}$ est un sous-groupo\"{i}de de $Aut_k(M/S)\vert_{U^a\times U^a}$
\end{enumerate}

\end{definition}

Afin de voir que cette vari\'et\'e est bien d\'efinie, donnons la d\'efinition d'une sous-vari\'et\'e minimale tangente de $R_k(M/S)\times R_k(M/S)$ qui soit une relation d'\'equivalence au-dessus d'un ouvert. Nous allons montrer que l'existence de l'une est \'equivalente \`a l'existence de l'autre, puis que cette deuxi\`eme vari\'et\'e est bien d\'efinie.

\begin{definition} \label{Ra}
 La vari\'et\'e $\mathcal{R}_k^a(X)$ est la plus petite sous-vari\'et\'e de $R_k(M/S)\times R_k(M/S)$ telle qu'il existe un ouvert $U^a\subset B$ v\'erifiant :
\begin{enumerate}
\item $0+R_kX\subset T\mathcal{R}_k^a(X)$
\item la restriction $\mathcal{R}_k^a(X)\vert_{U^a\times U^a}$ est une relation d'\'equivalence de $R_k(U^a/S)\times R_k(U^a/S)$ 
\end{enumerate}
\end{definition}

\begin{lemma}\label{corresRa}
En supposant que les vari\'et\'es $\mathcal{G}_k^a(X)$ et $\mathcal{R}_k^a(X)$ existent, nous avons les \'egalit\'es: $\mathcal{G}_k^a(X)=\Phi_k(\mathcal{R}_k^a(X))$ et $\Phi_k^{-1}(\mathcal{G}_k^a(X))=\mathcal{R}_k^a(X)$.
\end{lemma}

\begin{proof}
Gr\^{a}ce \`a la correspondance donn\'ee par $\Phi_k$, il suffit de montrer que la vari\'et\'e $\mathcal{R}_k^a(X)$ est stable sous l'action diagonale du groupe $\Gamma_k$. Soit $j_k(\gamma)\in \Gamma_k$. L'action diagonale de $j_k(\gamma)$ est not\'ee $\Delta_{j_k(\gamma)}$.

Montrons que la sous-vari\'et\'e $\Delta_{j_k(\gamma)}(\mathcal{R}_k^a(X))$ v\'erifie la condition (1) de la d\'efinition (\ref{Ra}). Le champ $R_kX$ est invariant sous l'action du groupe $\Gamma_k$. Nous avons l'inclusion 
\[0+R_kX=d\Delta_{j_k(\gamma)}.(0+R_kX)\subset d\Delta_{j_k(\gamma)}.T\mathcal{R}_k^a(X)=T(\Delta_{j_k(\gamma)}(\mathcal{R}_k^a(X)))
\]
Montrons que la sous-vari\'et\'e $\Delta_{j_k(\gamma)}(\mathcal{R}_k^a(X))$ v\'erifie la condition (2). Dans la sous-section (\ref{structalg}), nous avions d\'efini l'identit\'e $e$, la composition partielle $m$ et l'inversion $inv$ sur le produit $R_k(M/S)\times R_k(M/S)$. Montrons la stabilit\'e de la sous-vari\'et\'e sous ces trois applications au-dessus de l'ouvert $U^a$:
\begin{align*}
e(U^a)&=\Delta_{j_k(\gamma)}(e(U^a))\\
&\subset \Delta_{j_k(\gamma)}(\mathcal{R}_k^a(X)\vert_{U^a\times U^a})
\end{align*}
L'inversion et la composition partielle commutent avec l'action diagonale et stabilisent $\mathcal{R}_k^a(X)\vert_{U^a\times U^a}$, donc 
\begin{align*}
inv\circ \Delta_{j_k(\gamma)}(\mathcal{R}_k^a(X)\vert_{U^a\times U^a})&=\Delta_{j_k(\gamma)}\circ inv(\mathcal{R}_k^a(X)\vert_{U^a\times U^a})\\
&\subset \Delta_{j_k(\gamma)}(\mathcal{R}_k^a(X)\vert_{U^a\times U^a})
\end{align*} et 
\begin{multline*}
m\left(\Delta_{j_k(\gamma)}(\mathcal{R}_k^a(X)\vert_{U^a\times U^a})\underset{{}^s\!R_k(U^a/S)^t}{\times} \Delta_{j_k(\gamma)}(\mathcal{R}_k^a(X)\vert_{U^a\times U^a})\right) \\
 =\Delta_{j_k(\gamma)}\circ m\left(\mathcal{R}_k^a(X)\vert_{U^a\times U^a}\underset{{}^s\!R_k(U^a/S)^t}{\times}\mathcal{R}_k^a(X)\vert_{U^a\times U^a}\right)\\
\subset \Delta_{j_k(\gamma)}(\mathcal{R}_k^a(X)\vert_{U^a\times U^a})
\end{multline*}

En appelant la minimalit\'e de $\mathcal{R}_k^a(X)$, il vient l'inclusion $\mathcal{R}_k^a(X)\subset \Delta_{j_k(\gamma)}(\mathcal{R}_k^a(X))$. Pour tout ${j_k(\gamma)}\in \Gamma_k$, nous avons l'inclusion souhait\'ee $\Delta_{{j_k(\gamma}^{-1})}(\mathcal{R}_k^a(X))\subset \mathcal{R}_k^a(X)$.
\end{proof}


\begin{lemma}\label{lemRa}
La sous-vari\'et\'e $\mathcal{R}_k^a(X)$ est bien d\'efinie.
\end{lemma}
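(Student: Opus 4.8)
La strat\'egie est de r\'ealiser $\mathcal{R}_k^a(X)$ comme l'unique \'el\'ement minimal d'une famille stable par intersection finie. Notons $\mathcal{F}$ l'ensemble des sous-vari\'et\'es ferm\'ees $\mathcal{R}\subset R_k(M/S)\times R_k(M/S)$ pour lesquelles il existe un ouvert de Zariski non vide $U\subset M$ v\'erifiant les conditions (1) et (2) de la d\'efinition (\ref{Ra}). Comme $S$ et $M$ sont affines, $R_k(M/S)\times R_k(M/S)$ est une vari\'et\'e affine de dimension finie, dont l'espace topologique sous-jacent est noeth\'erien. Il suffit donc de montrer que $\mathcal{F}$ est non vide et stable par intersection : l'intersection $\bigcap_{\mathcal{R}\in\mathcal{F}}\mathcal{R}$ est alors, par noeth\'erianit\'e, une intersection finie d'\'el\'ements de $\mathcal{F}$, donc elle-m\^eme un \'el\'ement de $\mathcal{F}$ ; \'etant contenue dans chacun de ses \'el\'ements, c'est l'unique plus petit \'el\'ement, que l'on note $\mathcal{R}_k^a(X)$.

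La famille $\mathcal{F}$ est non vide : la vari\'et\'e totale $R_k(M/S)\times R_k(M/S)$ (avec $U=M$) porte la relation indiscr\`ete, contient la diagonale $e(R_k(M/S))$, est stable par $inv$ et par la composition partielle $m$, et son espace tangent \'etant plein, le champ $0+R_kX$ lui est trivialement tangent. Montrons maintenant la stabilit\'e par intersection pour la condition (1). Soient $\mathcal{R}_1,\mathcal{R}_2\in\mathcal{F}$, d'ouverts associ\'es $U_1,U_2$ et d'id\'eaux radicaux $I_1,I_2$, et posons $D:=0+R_kX$. La condition (1) pour $\mathcal{R}_j$ \'equivaut \`a $D(I_j)\subset I_j$. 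L'id\'eal de $\mathcal{R}_1\cap\mathcal{R}_2$ est $\sqrt{I_1+I_2}$ ; comme $D(I_1+I_2)\subset I_1+I_2$ et qu'une d\'erivation stabilisant un id\'eal stabilise aussi son radical en caract\'eristique nulle, on obtient $D(\sqrt{I_1+I_2})\subset\sqrt{I_1+I_2}$. De fa\c{c}on \'equivalente, le flot $\exp(\tau(0+R_kX))$, qui agit par $(j_kr,j_ks)\mapsto(j_kr,j_k(\exp(\tau X)\circ s))$, pr\'eserve chaque $\mathcal{R}_j$, donc leur intersection, ce qui redonne la tangence. Ainsi $\mathcal{R}_1\cap\mathcal{R}_2$ satisfait (1).

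Pour la condition (2), posons $U:=U_1\cap U_2$, ouvert non vide car $M$ est irr\'eductible. On a $(\mathcal{R}_1\cap\mathcal{R}_2)\vert_{U\times U}=\mathcal{R}_1\vert_{U\times U}\cap\mathcal{R}_2\vert_{U\times U}$. Une intersection de deux relations d'\'equivalence est une relation d'\'equivalence : elle contient la diagonale $e(R_k(U/S))$, et elle est stable par $inv$ et par $m$, ces op\'erations g\'eom\'etriques commutant \`a l'intersection (si $(r_3,r_1)$ et $(r_1,r_2)$ sont dans $\mathcal{R}_1\cap\mathcal{R}_2$, alors $(r_3,r_2)$ appartient \`a chaque $\mathcal{R}_j$, donc \`a l'intersection). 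Donc $\mathcal{R}_1\cap\mathcal{R}_2\in\mathcal{F}$, ce qui ach\`eve la preuve de la stabilit\'e par intersection et, par l'argument de noeth\'erianit\'e ci-dessus, l'existence de $\mathcal{R}_k^a(X)$.

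Le point d\'elicat est la stabilit\'e de la condition de tangence (1) : l'intersection sch\'ematique de $\mathcal{R}_1$ et $\mathcal{R}_2$ pouvant \^etre non r\'eduite, il faut v\'erifier que la tangence passe au radical $\sqrt{I_1+I_2}$ ; c'est exactement ce que fournit soit le r\'esultat classique sur les id\'eaux diff\'erentiels en caract\'eristique nulle, soit l'invariance du lieu r\'eduit sous le flot de $0+R_kX$. Le reste (non-vacuit\'e, stabilit\'e de la structure de relation d'\'equivalence, restriction \`a l'ouvert $U_1\cap U_2$) est formel.
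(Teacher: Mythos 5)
Votre preuve est correcte et repose, comme celle du texte, sur la n{\oe}th\'erianit\'e, mais elle est men\'ee sous une forme duale et avec un ingr\'edient diff\'erent au point d\'elicat. Le texte travaille au niveau des id\'eaux, non n\'ecessairement radicaux, de l'anneau localis\'e $L$ : les trois conditions (i), (ii), (iii) y sont stables par \emph{somme} d'id\'eaux (par pure additivit\'e, sans aucun passage au radical), un id\'eal maximal existe par n{\oe}th\'erianit\'e, et l'essentiel de l'effort est ensuite consacr\'e \`a redescendre de cet id\'eal \`a une vari\'et\'e qui soit une relation d'\'equivalence au-dessus d'un ouvert explicite $M^\circ-\{h=0\}$ (en chassant les d\'enominateurs de $m^*$ et $inv^*$ par une seule fonction $h$), puis \`a un argument de minimalit\'e comparant id\'eaux et vari\'et\'es. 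Vous travaillez directement avec les vari\'et\'es ferm\'ees et l'\emph{intersection} : la structure de relation d'\'equivalence au-dessus de $U_1\cap U_2$ et la minimalit\'e sont alors imm\'ediates, mais l'id\'eal de $\mathcal{R}_1\cap\mathcal{R}_2$ \'etant $\sqrt{I_1+I_2}$, la stabilit\'e de la condition de tangence exige le lemme de Seidenberg (une d\'erivation stabilisant un id\'eal stabilise son radical en caract\'eristique nulle) ou, de fa\c{c}on \'equivalente, l'invariance de chaque $\mathcal{R}_j$ sous le flot local de $0+R_kX$ ; vous identifiez correctement ce point et le traitez des deux mani\`eres. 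Le texte \'evite cet ingr\'edient pr\'ecis\'ement parce qu'il ne r\'eduit jamais ses id\'eaux. Les deux arguments sont valides ; le v\^otre est plus court et plus g\'eom\'etrique, celui du texte d\'ecrit plus explicitement l'ouvert $U^a$ obtenu. Veillez seulement \`a pr\'eciser que la condition $D(I_j)\subset I_j$ s'entend dans l'anneau localis\'e o\`u le champ rationnel $0+R_kX$ est d\'efini, comme le fait le texte.
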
 
 
\begin{proof} 
Par le lemme (\ref{lemreperelocglob}), il existe un ouvert affine $M^\circ\subset M$ tel que le fibr\'e des $S$-rep\`eres $R_k(M^\circ/S)$ soit aussi affine. Cet ouvert peut \^{e}tre pris de telle sorte que le prolongement $R_kX$ soit bien d\'efini sur $R_k(M^\circ/S)$.
Notons $K:=\mathcal{O}({R_k(M^\circ/S)})\underset{\mathcal{O}({M^\circ})}{\otimes}\mathbb{C}(M)$ l'anneau des fonctions r\'eguli\`eres sur $R_k(M^\circ/S)$ localis\'e en les fonctions r\'eguli\`eres de l'ouvert affine $M^\circ$. Notons $L:=K\otimes K$ l'anneau des fonctions r\'eguli\`eres de $R_k(M^\circ/S)\times R_k(M^\circ/S)$ localis\'e en les fonctions r\'eguli\`eres de l'ouvert $M^\circ$ par la source et par le but. Regardons les id\'eaux $I$ de $L$ v\'erifiant : 
\begin{enumerate}
\item[(i)] L'id\'eal $\mathcal{I}(e(M^\circ))$ de $L$ associ\'e \`a l'identit\'e $e(M^\circ)$ de $R_k(M^\circ/S)\times R_k(M^\circ/S)$ contient $I$
\item[(ii)] $inv^{*}(I)\subset I$ et $m^{*}(I)\subset I\underset{{}^sK^t}{\otimes} 1+1\underset{{}^sK^t}{\otimes} I$ 
\item[(iii)]  $(0+R_kX)(I)\subset I$
\end{enumerate}

 Si $I_1$ et $I_2$ sont deux tels id\'eaux de $L$, c'est-\`a-dire qui v\'erifient (i), (ii) et (iii), alors l'id\'eal $I_1+I_2$ v\'erifie encore (i). De plus $inv^*$, $m^*$ et l'action d'un champ de vecteurs $X$ \'etant additifs, il vient que $I_1+I_2$ v\'erifie aussi (ii) et (iii). Par n{\oe}th\'erianit\'e, il existe un id\'eal maximal, \`a nouveau not\'e $I$, v\'erifiant les trois conditions ci-dessus. Notons $N :=\mathcal{V}(I\cap \mathcal{O}(R_k(M^\circ/S)\times R_k(M^\circ/S)))$ la sous-vari\'et\'e de $R_k(M^\circ/S)\times R_k(M^\circ/S)$ correspondant \`a l'id\'eal $I$.
 
 Montrons que son adh\'erence $\overline{N}$ est la sous-vari\'et\'e de $R_k(M/S)\times R_k(M/S)$ recherch\'ee, c'est-\`a-dire qu'elle v\'erifie (1), (2) et qu'elle est minimale.
 
  La stabilit\'e de l'id\'eal $I$ sous $inv^*$ et $m^*$ perdure en n'inversant qu'une seule fonction de $\mathcal{O}(M^\circ)$. L'id\'eal $I$ est engendr\'e par des \'el\'ements $f_1,\ldots,f_n$, qui 
  peuvent \^{e}tre pris dans l'anneau $\mathcal{O}(R_k(M^\circ/S)\times R_k(M^\circ/S))$. L'id\'eal $I$ v\'erifiant (ii), il existe des \'el\'ements $\alpha_{ij}$, $\beta_{ij}$, $\gamma_{ij}$, $\delta_{ij}$ dans l'anneau localis\'e $L$ tels que $m^*(f_{i})=\displaystyle{\sum_{j}}\alpha_{ij}f_j\underset{{}K}{\otimes} \beta_{ij}+\gamma_{ij}\underset{{}K}{\otimes} \delta_{ij}f_j$. Il existe un multiple commun $h\in \mathcal{O}(M^\circ)$ des d\'enominateurs de tous ces \'el\'ements, c'est-\`a-dire, pour tout $i,j$, les \'el\'ements $\alpha_{ij}$, $\beta_{ij}$, $\gamma_{ij}$, $\delta_{ij}$ sont contenus dans l'anneau 
  $\mathcal{O}(R_k(M^\circ/S)\times R_k(M^\circ/S))\left[\frac{1}{h}\otimes 1,1\otimes \frac{1}{h}\right]$. 
  Cet anneau a de plus la particularit\'e d'\^{e}tre stable sous les morphismes $inv^*$ et $m^*$ puisque l'anneau $\mathcal{O}(R_k(M^\circ/S)\times R_k(M^\circ/S))$ l'est et que $inv^*\left(\frac{1}{h} \otimes1\right)=1\otimes \frac{1}{h}$,
 $inv^*\left(1 \otimes \frac{1}{h}\right)=\frac{1}{h}\otimes1$,
 $m^*\left(\frac{1}{h} \otimes 1\right)=\left(\frac{1}{h} \otimes 1\right) \otimes\left(1 \otimes 1\right)$ et
 $m^*\left(1 \otimes \frac{1}{h}\right)=\left(1 \otimes 1\right) \otimes\left(1\otimes\frac{1}{h}\right)$.
  Il vient alors la stabilit\'e recherch\'ee : en posant $I'=I\cap \mathcal{O}(R_k(M^\circ/S)\times R_k(M^\circ/S))\left[\frac{1}{h}\otimes 1,1\otimes \frac{1}{h}\right]$, alors $inv^{*}(I')\subset I'$ et $m^{*}(I')\subset I'  \otimes 1+1\otimes  I'$. Autrement dit, la sous-vari\'et\'e $N$ est stable par inversion et par composition partielle au-dessus de l'ouvert $M^\circ-\{h=0\}$. De plus, l'inclusion (i): $I\cap \mathcal{O}(R_k(M^\circ/S)\times R_k(M^\circ/S))\subset \mathcal{I}(e(M^\circ))$ implique $e(M^\circ-\{h=0\})\subset N$. Donc son adh\'erence $\overline{N}$ dans $R_k(M/S)$ v\'erifie la condition (2) avec cet ouvert.
 
 La condition (iii) implique que le prolongement $0+R_kX$ est tangent \`a la sous-vari\'et\'e $N$. Cette condition est ferm\'ee. Il vient que le prolongement $0+R_kX$ est tangent \`a son adh\'erence, \textit{i.e.} $\overline{N}$ v\'erifie (1).
 
 Il reste enfin la minimalit\'e. Soit $N'$ une sous-vari\'et\'e de $R_k(M/S)\times R_k(M/S)$ v\'erifiant (1) et (2). Notons $O'$ l'ouvert de $M$ correspondant \`a la condition (2). Montrons que cette sous-vari\'et\'e contient $\overline{N}$. L'intersection $N''=N'\cap \overline{N}$ v\'erifie elle aussi ces deux conditions o\`{u} l'ouvert correspondant \`a la condition (2) est l'intersection $O'\cap (M^\circ-\{k=0\})$. Notons $N_{M^\circ}''$ la restriction de $N''$ \`a l'ouvert affine $R_k(M^\circ/S)\times R_k(M^\circ/S)$ et $J :=\mathcal{I}(N_{M^\circ}'')$ l'id\'eal associ\'e. L'inclusion des sous-vari\'et\'es $N_{M^\circ}''\subset N$ implique l'inclusion inverse des id\'eaux associ\'es $(I\cap \mathcal{O}(R_k(M^\circ/S)\times R_k(M^\circ/S)))\subset J$. Ainsi, 
 il vient $I=(I\cap \mathcal{O}(R_k(M^\circ/S)\times R_k(M^\circ/S)))L\subset JL$. Or, tout comme l'id\'eal $I$, l'id\'eal $JL$ de l'anneau localis\'e $L$ v\'erifie aussi les conditions (i), (ii) et (iii). La maximalit\'e de $I$ implique $I=JL$. Ceci 
 donne l'inclusion $J\subset (JL\cap \mathcal{O}(R_k(M^\circ/S)\times R_k(M^\circ/S)))=I\cap\mathcal{O}(R_k(M^\circ/S)\times R_k(M^\circ/S))$. En revenant aux sous-vari\'et\'es associ\'ees, il vient l'inclusion inverse $N\subset N_{M^\circ}''$. Puis en passant aux adh\'erences, il vient $\overline{N}\subset \overline{ N_{M^\circ}''}\subset N''$. Finalement, nous avons montr\'e l'\'egalit\'e $\overline{N}=N''$, ce qui donne l'inclusion $\overline{N}\subset N'$ escompt\'ee.
 \end{proof}

 \subsubsection{D\'efinition topologique}
 
 Voici la d\'efinition du groupo\"{i}de en tant que cl\^{o}ture de Zariski du flot de $X$.
 
\begin{definition}\label{Rb}
 Le groupo\"{i}de $\mathcal{G}_k^b(X)$ est la plus petite sous-vari\'et\'e de $Aut_k(M/S)$ qui est tangente au prolongement $R_k^tX$ et qui contient l'identit\'e. Autrement dit, en reprenant les notations de la d\'efinition \cite[definition 1.2]{Bonnet} $\mathcal{G}_k^b(X) :=V(R^t_k X,e(M))$.
\end{definition}

Le corollaire  \cite[corollary 2.6]{Bonnet}  nous dit que cette vari\'et\'e minimale tangente est bien d\'efinie et qu'elle est irr\'eductible. De plus, comme pr\'ec\'edemment, il lui correspond une sous-vari\'et\'e minimale tangente de $R_k(M/S)\times R_k(M/S)$.
 
\begin{definition}
La vari\'et\'e $\mathcal{R}_k^b(X)$, est la plus petite sous-vari\'et\'e de $R_k(M/S)\times R_k(M/S)$ qui est tangente au prolongement $0+R_kX$ et qui contient l'identit\'e. Autrement dit, 
$\mathcal{R}_k^b(X) :=V(0+R_kX,e(M))$.
\end{definition}
De la m\^{e}me mani\`ere que dans la preuve du lemme (\ref{lemRa}), nous pouvons montrer que cette vari\'et\'e est invariante sous l'action diagonale et correspond par $\Phi_k$ au groupo\"{i}de $\mathcal{G}_k^b(X)$:
\begin{lemma}\label{corresRb}
Les vari\'et\'es minimales tangentes v\'erifient $\mathcal{G}_k^b(X)=\Phi_k(\mathcal{R}_k^b(X))$ et $\Phi_k^{-1}(\mathcal{G}_k^b(X))=\mathcal{R}_k^b(X)$.
\end{lemma}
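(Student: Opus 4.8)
Le plan est de calquer la preuve du lemme \ref{corresRa} : gr\^ace au lemme \ref{quotientalg}, o\`u $\Phi_k$ appara\^it comme le quotient par l'action diagonale de $\Gamma_k$ (ses fibres sont exactement les orbites), il suffit d'\'etablir que $\mathcal{R}_k^b(X)$ est stable sous cette action, puis de v\'erifier que les propri\'et\'es caract\'eristiques de $\mathcal{R}_k^b(X)$ et de $\mathcal{G}_k^b(X)$ se correspondent via $\Phi_k$. L'existence et l'irr\'eductibilit\'e des deux vari\'et\'es minimales tangentes sont d\'ej\`a acquises par \cite[corollary 2.6]{Bonnet}, je n'aurais donc qu'\`a identifier ces vari\'et\'es.

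Pour la $\Gamma_k$-invariance, je reprendrais la strat\'egie du lemme \ref{corresRa}. Fixons $j_k(\gamma)\in\Gamma_k$ et notons $\Delta_{j_k(\gamma)}$ son action diagonale. Je montrerais que $\Delta_{j_k(\gamma)}(\mathcal{R}_k^b(X))$ v\'erifie les deux conditions d\'efinissant $\mathcal{R}_k^b(X)$. D'une part, l'\'equation (\ref{equaGinv}) donne l'invariance $d\Delta_{j_k(\gamma)}.(0+R_kX)=0+R_kX$, d'o\`u la tangence $0+R_kX\subset T(\Delta_{j_k(\gamma)}(\mathcal{R}_k^b(X)))$. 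D'autre part, la section identit\'e est pr\'eserv\'ee, puisque $\Delta_{j_k(\gamma)}(j_k(r),j_k(r))=(j_k(r\circ\gamma),j_k(r\circ\gamma))$ reste diagonale, de sorte que $e(M)=\Delta_{j_k(\gamma)}(e(M))\subset\Delta_{j_k(\gamma)}(\mathcal{R}_k^b(X))$. Par minimalit\'e de $\mathcal{R}_k^b(X)$, il vient $\mathcal{R}_k^b(X)\subset\Delta_{j_k(\gamma)}(\mathcal{R}_k^b(X))$ ; en appliquant ceci \`a $j_k(\gamma)^{-1}$ on obtient l'inclusion inverse, donc l'\'egalit\'e $\Delta_{j_k(\gamma)}(\mathcal{R}_k^b(X))=\mathcal{R}_k^b(X)$.

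Il resterait \`a identifier l'image. Comme $\mathcal{R}_k^b(X)$ est $\Gamma_k$-invariante, le lemme \ref{quotientalg} assure que $\Phi_k(\mathcal{R}_k^b(X))$ est une sous-vari\'et\'e de $Aut_k(M/S)$ satisfaisant $\Phi_k^{-1}(\Phi_k(\mathcal{R}_k^b(X)))=\mathcal{R}_k^b(X)$. La relation $R_k^tX=d\Phi_k.(0+R_kX)$ montre que $\Phi_k(\mathcal{R}_k^b(X))$ est tangente \`a $R_k^tX$ et contient $\Phi_k(e(M))=e(M)$ ; la minimalit\'e de $\mathcal{G}_k^b(X)$ donne alors $\mathcal{G}_k^b(X)\subset\Phi_k(\mathcal{R}_k^b(X))$. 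R\'eciproquement, $\Phi_k^{-1}(\mathcal{G}_k^b(X))$ est $\Gamma_k$-invariante (r\'eunion d'orbites), tangente \`a $0+R_kX$ et contient $e(M)$, d'o\`u par minimalit\'e $\mathcal{R}_k^b(X)\subset\Phi_k^{-1}(\mathcal{G}_k^b(X))$, c'est-\`a-dire $\Phi_k(\mathcal{R}_k^b(X))\subset\mathcal{G}_k^b(X)$. On conclut l'\'egalit\'e $\mathcal{G}_k^b(X)=\Phi_k(\mathcal{R}_k^b(X))$, puis $\Phi_k^{-1}(\mathcal{G}_k^b(X))=\mathcal{R}_k^b(X)$ en utilisant la $\Gamma_k$-invariance d\'ej\`a \'etablie.

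Le point d\'elicat sera le transfert de la condition de tangence par $\Phi_k$ et $\Phi_k^{-1}$ : il faut justifier que l'image d'une vari\'et\'e tangente \`a $0+R_kX$ est tangente \`a $R_k^tX$, et que l'image r\'eciproque d'une vari\'et\'e tangente \`a $R_k^tX$ est tangente \`a $0+R_kX$. Ceci d\'ecoule de ce que $\Phi_k$ est le quotient par l'action libre et lisse de $\Gamma_k$ --- sa diff\'erentielle envoie $0+R_kX$ sur $R_k^tX$ par d\'efinition m\^eme du prolongement par le but --- la $\Gamma_k$-invariance pr\'ealable garantissant que l'image de $\mathcal{R}_k^b(X)$ est une sous-vari\'et\'e bien d\'efinie \`a laquelle appliquer cet argument.
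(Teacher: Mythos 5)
Votre preuve est correcte et suit exactement la d\'emarche que l'article indique (l'article ne r\'edige pas ce lemme : il renvoie simplement \`a l'argument du lemme \ref{corresRa}, que vous reproduisez fid\`element — invariance de $\mathcal{R}_k^b(X)$ sous l'action diagonale via l'\'equation (\ref{equaGinv}) et la pr\'eservation de la diagonale, puis double inclusion par minimalit\'e de part et d'autre du quotient $\Phi_k$). Vous explicitez m\^eme le point de transfert de la tangence par $\Phi_k$ et $\Phi_k^{-1}$ que l'article laisse implicite ; rien \`a redire.
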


\subsubsection{D\'efinition via les int\'egrales premi\`eres}
\begin{definition}
Une fonction rationnelle $H\in \mathbb{C}(R_k(M/S))$ est une int\'egrale premi\`ere rationnelle du prolongement $R_kX$ si $R_kX.H=0$.

Le sous-corps de $\mathbb{C}(R_k(M/S))$ constitu\'e des int\'egrales premi\`eres rationnelles du prolongement $R_kX$ est not\'e $\mathbb{C}(R_k(M/S))^{R_kX}$.
\end{definition}
Dans cette sous-section, le groupo\"{i}de est l'ensemble des $S$-automorphismes qui pr\'eservent les niveaux des int\'egrales premi\`eres rationnelles de $R_kX$:

\begin{definition}\label{Rc}
Soient $(H_i)_{1\leq i\leq n}$ une famille g\'en\'eratrice de $\mathbb{C}(R_k(M/S))^{R_kX}$ et $R_k(M/S)^\circ$ un ouvert de $R_k(M/S)$ sur lequel les applications $H_i$ sont bien d\'efinies. Le groupo\"{i}de $\mathcal{G}_k^c(X)$ est la cl\^{o}ture de Zariski de l'ensemble $\{\phi\in \Phi_k(R_k(M/S)^\circ\times R_k(M/S)^\circ)\ \vert\ \forall i,\ H_i\circ\phi=H_i\}$.
\end{definition}

Cette d\'efinition du groupo\"{i}de demande de faire un choix parmi les int\'egrales premi\`eres du prolongement du champ $X$. Nous allons d\'efinir une sous-vari\'et\'e de $R_k(M/S)\times R_k(M/S)$ qui lui correspond. Nous allons montrer qu'elle est intrins\`eque au champ $X$, qu'elle est invariante sous l'action diagonale du groupe $\Gamma_k$ et que son image par $\Phi_k$ est le groupo\"{i}de. Commen\c{c}ons par rappeler la d\'efinition d'int\'egrale premi\`ere maximale donn\'ee en annexe :

\begin{definition}
Soient $N$ une vari\'et\'e irr\'eductible et $\pi :R_k(M/S)\dashrightarrow N$ une application rationnelle dominante. L'application $\pi$ est une int\'egrale premi\`ere maximale de $R_kX$ si $\pi^*\mathbb{C}(N)=\mathbb{C}(R_k(M/S))^{R_kX}$.
\end{definition}
Voici la d\'efinition de la sous-vari\'et\'e dont nous avons parl\'e :
\begin{definition}\label{defRc}
Soient $\pi :R_k(M/S)\dashrightarrow N$ une int\'egrale premi\`ere maximale du prolongement $R_kX$ et $R_k(M/S)^\circ$ le domaine de d\'efinition de $\pi$. La sous-vari\'et\'e $\mathcal{R}_k^c(X)$ de $R_k(M/S)\times R_k(M/S)$ est la cl\^{o}ture de Zariski du produit fibr\'e associ\'e, \textit{i.e.}  $\mathcal{R}_k^c(X) :=\overline{R_k(M/S)^\circ\times_{N} R_k(M/S)^\circ}$. Cette cl\^{o}ture sera aussi not\'ee $R_k(M/S)\times_{N} R_k(M/S)$.
\end{definition}

Dans cette d\'efinition nous avons d\^{u} choisir une int\'egrale premi\`ere maximale du prolongement du champ $X$. Nous donnons deux lemmes qui seront utiles pour montrer que la sous-vari\'et\'e d\'efinie est intrins\`eque au champ $X$ et ne d\'epend pas de l'int\'egrale premi\`ere maximale choisie. Ces lemmes seront aussi utiles dans la preuve de l'\'equivalence des trois d\'efinitions dans la section suivante puis dans le th\'eor\`eme de sp\'ecialisation qui est donn\'e en fin de chapitre.

\begin{lemma} \label{fibredense}
Soit $f:P\rightarrow N$ un morphisme dominant o\`{u} $N$ est une vari\'et\'e irr\'eductible. Soit $O$ un ouvert dense de $P$. Il existe un ouvert $W\subset N$ tel que pour tout $y\in W$, $f^{-1}(y)\cap O$ est un ouvert dense de $f^{-1}(y)$.
\end{lemma}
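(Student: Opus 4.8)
The plan is to reduce the statement to the theorem on the dimension of the fibres of a dominant morphism and to run a dimension count component by component. Write $Z := P\setminus O$, a closed, nowhere dense subset of $P$. Since $O$ is open, $f^{-1}(y)\cap O$ is automatically open in $f^{-1}(y)$, so the only content is density; and $f^{-1}(y)\cap O$ is dense in $f^{-1}(y)$ precisely when $Z$ contains no irreducible component of $f^{-1}(y)$. It is this last formulation that I would establish for $y$ in a suitable nonempty open $W\subseteq N$.

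First I would decompose $P=P_1\cup\cdots\cup P_r$ into irreducible components. As $O$ is dense, $O\cap P_i$ is dense open in each $P_i$, so $Z_i:=P_i\setminus O$ is a proper closed subvariety and $\dim Z_i<\dim P_i$. The non-dominant components cause no trouble: if $f(P_i)$ is not dense, then $\overline{f(P_i)}$ is a proper closed subset of the irreducible variety $N$, and over the complement of the (finite) union of these closures the fibre meets only the dominant $P_i$. Because $f$ is dominant and $N$ is irreducible, at least one $P_i$ is dominant. Call the resulting nonempty open set $W_1$.

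Next I would control the bad locus fibrewise. Decompose each $Z_i$ into irreducible components $Z_{i,\ell}$, so $\dim Z_{i,\ell}<\dim P_i$. For those $Z_{i,\ell}$ that dominate $N$, the fibre-dimension theorem yields a nonempty open $W_{i,\ell}\subseteq N$ over which $\dim\bigl((f|_{Z_{i,\ell}})^{-1}(y)\bigr)=\dim Z_{i,\ell}-\dim N$; for those that do not dominate $N$, set $W_{i,\ell}:=N\setminus\overline{f(Z_{i,\ell})}$, over which their fibres are empty. Intersecting $W_1$ with all the finitely many nonempty opens $W_{i,\ell}$ gives a nonempty open $W\subseteq N$, where irreducibility of $N$ is exactly what keeps a finite intersection of nonempty opens nonempty.

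Finally, fix $y\in W$ and let $C$ be an irreducible component of $f^{-1}(y)$. Over $W_1$ the fibre lies in the dominant components, so $C\subseteq P_i$ for some dominant $i$; as $C$ is then a component of the fibre of $f|_{P_i}$, the universal lower bound on fibre dimension gives $\dim C\ge\dim P_i-\dim N$. Suppose $C\subseteq Z$. Then $C\subseteq P_i\cap Z=Z_i$, so $C\subseteq Z_{i,\ell}$ for some $\ell$, whence $C\subseteq (f|_{Z_{i,\ell}})^{-1}(y)$. By the choice of $W$ this fibre is empty or of dimension $\dim Z_{i,\ell}-\dim N<\dim P_i-\dim N$, contradicting $\dim C\ge\dim P_i-\dim N$ together with $C\neq\emptyset$. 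Hence no component of $f^{-1}(y)$ is contained in $Z$, so $f^{-1}(y)\cap O$ meets every component and is dense in $f^{-1}(y)$. The one genuine obstacle is the bookkeeping forced by the reducibility of $P$ and by the possibly different fibre dimensions of the various $P_i$; the crux that makes it work is the strict inequality $\dim Z_i<\dim P_i$ and its propagation, via the fibre-dimension theorem, to the generic fibres, which prevents $Z$ from swallowing a whole component of $f^{-1}(y)$.
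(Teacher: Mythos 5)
Your proof is correct and follows essentially the same route as the paper's: decompose $P$ into irreducible components, discard the non-dominant ones by removing the closures of their images, and use the fibre-dimension theorem together with the strict inequality $\dim(P_i\setminus O)<\dim P_i$ to see that the bad locus cannot contain a whole component of a general fibre. The only (harmless) difference is organisational: you decompose each $Z_i$ further into irreducible components and argue by contradiction on a component $C$ of $f^{-1}(y)$, whereas the paper bounds directly the dimension of the generic fibre of $f\vert_{P_i-O}$ against the pure dimension $d_i-n$ of the generic fibre of $f\vert_{P_i}$.
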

\begin{proof}
Soient $n$ la dimension de la vari\'et\'e $N$, $(P_i)_{i\in I}$ l'ensemble des composantes irr\'eductibles de $P$, $(P_i)_{i\in J}$ l'ensemble des composantes irr\'eductibles de $P$ qui dominent $N$, $(P_i)_{i\in J'}$ le sous-ensemble des composantes irr\'eductibles de $P$ qui dominent $N$ et telles que $P_i-O$ ne domine pas $N$. Soit $N_0=N-\left(\underset{i\in I-J}{\bigcup}f(P_i)\bigcup \underset{i\in J'}{\bigcup}f(P_i-O)\right)$.

Par d\'efinition, pour tout $i\in J', y\in N_0$, $f\vert_{P_i}^{-1}(y)\cap O=f\vert_{P_i}^{-1}(y)$.
Soit $i\in J-J'$ et $d_i$ la dimension de $P_i$. L'ensemble $P_i-O$ est de dimension inf\'erieure \`a $d_i-1$ car $O$ est un ouvert dense. Une fibre g\'en\'erique de la restriction $f\vert_{P_i}$ est de dimension pure $d_i-n$ et la dimension d'une fibre g\'en\'erique de la restriction $f\vert_{P_i-O}$ est inf\'erieure \`a $d_i-1-n$. Autrement dit, il existe un ouvert $W_i\subset N_0$ tel que pour tout $y\in W_i$, $f\vert_{P_i}^{-1}(y)$ est de dimension pure $d_i-n$ et tel que le compl\'ementaire de $f\vert_{P_i}^{-1}(y)\cap O$ dans $f\vert_{P_i}^{-1}(y)$ est de dimension inf\'erieure \`a $d_i-1-n$. Donc pour tout $y\in W_i$, $f\vert_{P_i}^{-1}(y)\cap O$ est dense dans $f\vert_{P_i}^{-1}(y)$. L'ouvert $W:=\underset{i\in J}{\bigcap}W_i\subset N_0$ est l'ouvert recherch\'e puisque pour tout $y\in W$, $f^{-1}(y)\cap O$ est dense dans $f^{-1}(y)$.
\end{proof}

\begin{lemma}\label{prodfibre}
Soit $f :P\dashrightarrow N$ une application rationnelle dominante o\`{u} $N$ est une vari\'et\'e irr\'eductible. Soit $E$ un ensemble g\'en\'eral de $P$. Le produit fibr\'e $E\times_NE$ est dense dans le produit fibr\'e $P\times_N P$.
\end{lemma}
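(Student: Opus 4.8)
The plan is to deduce the statement from Lemma \ref{fibredense} by a fibrewise argument, after reducing to the locus where $f$ is equidimensional. First I would restrict to the domain of definition of $f$, so that $f$ may be assumed a dominant morphism $P\to N$; intersecting $E$ with this dense open keeps $E$ general. On $P\times_N P$ I consider the morphism $g:=f\circ pr_1=f\circ pr_2$, whose fibre over a closed point $y\in N$ is $g^{-1}(y)=f^{-1}(y)\times f^{-1}(y)$. By Chevalley's theorem on fibre dimension there is a dense open $N^\circ\subseteq N$ over which $f$ is equidimensional; over $N^\circ$ every component of $P\times_N P$ dominates $N$, which is exactly what will let the fibrewise density propagate to the whole variety (see the obstacle below).

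Next I would exploit the very definition of a general set: $E=\bigcap_{i\in\mathbb{N}}O_i$ is a countable intersection of dense Zariski opens $O_i\subseteq P$. Applying Lemma \ref{fibredense} to each $O_i$ yields a dense open $W_i\subseteq N$ such that $f^{-1}(y)\cap O_i$ is dense in $f^{-1}(y)$ for every $y\in W_i$. Setting $W:=N^\circ\cap\bigcap_i W_i$, a general and hence dense subset of $N$, I then fix $y\in W$. For such $y$ each $O_i\cap f^{-1}(y)$ is a dense open of $f^{-1}(y)$, so its complement is a proper closed subset; since the base field $\mathbb{C}$ is uncountable, a variety is never a countable union of proper closed subvarieties, and therefore $E\cap f^{-1}(y)=\bigcap_i\bigl(O_i\cap f^{-1}(y)\bigr)$ is dense in $f^{-1}(y)$.

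To conclude I would pass to the fibre product. For $y\in W$ the equality $(E\times_N E)\cap g^{-1}(y)=(E\cap f^{-1}(y))\times(E\cap f^{-1}(y))$ holds, and a product of dense subsets is dense in the product of the ambient varieties, so this set is dense in $g^{-1}(y)=f^{-1}(y)\times f^{-1}(y)$. As $g^{-1}(y)$ is closed in $P\times_N P$, the closure $\overline{E\times_N E}$ contains $g^{-1}(y)$ for every $y\in W$, hence contains $\overline{g^{-1}(W)}$. Finally, because $W$ is dense in $N$ and every component of $P\times_N P$ over $N^\circ$ dominates $N$, the same uncountability argument shows that $g^{-1}(W)$ is dense in $P\times_N P$; therefore $\overline{E\times_N E}=P\times_N P$.

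The main obstacle is this last step. A fibre product may acquire components lying over a proper closed subset of $N$ when the fibre dimension of $f$ jumps, and such a component could be missed entirely by a general set $E$ (for instance $E=P\setminus f^{-1}(z_0)$ misses any component sitting over $z_0$). This is precisely why I insist on restricting to the equidimensional locus $N^\circ$, where no spurious components occur and every component of the fibre product dominates $N$; reconciling this reduction with the intended meaning of $P\times_N P$ (the closure of the fibre product over the domain of $f$, as in Definition \ref{defRc}) is the one point that must be handled with care. The two invocations of the uncountability of $\mathbb{C}$ — to make $E\cap f^{-1}(y)$ dense in the fibre, and to make $g^{-1}(W)$ dense in the total space — are the other places where the countable notion of generality used throughout the paper is essential.
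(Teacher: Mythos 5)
Your proof is essentially the paper's own: the paper likewise applies Lemma \ref{fibredense} to each open $O_i$ of the countable intersection $E=\bigcap_i O_i$, uses that a countable intersection of dense opens of a complex variety is dense to conclude that $E\cap f^{-1}(f(x))$ is dense in $f^{-1}(f(x))$ for $x$ in a general subset $U\subset P$ (so that $\{x\}\times f^{-1}(f(x))\subset\overline{E\times_N E}$ for such $x$), and then finishes by setting $V:=P^\circ\times_N P^\circ\setminus\overline{E\times_N E}$ and arguing that $pr_1(V)\cap U=\emptyset$ forces $V=\emptyset$; the only difference from your write-up is bookkeeping (you index the good locus by points $y\in N$, the paper pulls it back to points $x\in P$). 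The obstacle you flag --- components of the fibre product lying over the locus where the fibre dimension of $f$ jumps --- is exactly the step where the paper asserts that a nonempty open $V$ would have $pr_1(V)$ containing a nonempty open of $P^\circ$; the paper does not restrict to an equidimensional locus, so your final density claim for $g^{-1}(W)$ and the paper's claim about $pr_1(V)$ are the same assertion, and the paper supplies no further detail there than you do.
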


\begin{proof}
\textit{Fait :} Notons $P^\circ$ le domaine de d\'efinition de $f$. Il existe un ensemble g\'en\'eral $U\subset P$ tel que pour tout $x\in U$, $\{x\}\times f^{-1}(f(x))$ est inclus dans l'adh\'erence de Zariski de $E\times_NE$ dans $P^\circ\times_N P^\circ$. Cette adh\'erence est not\'ee $\overline{E\times_NE}^{P^\circ}$.

\noindent L'ensemble $E$ est l'intersection d\'enombrable d'ouverts $O_i\subset P$, $i\in\mathbb{N}$. Fixons $i\in\mathbb{N}$. Le lemme (\ref{fibredense}) appliqu\'e au morphisme $f:P^\circ\rightarrow N$ et \`a l'ouvert $O_i$ donne l'existence d'un ouvert $W_i\subset N$ tel que pour tout $y\in W_i$, $f^{-1}(y)\cap O_i$ est un ouvert dense de $f^{-1}(y)$. Pour tout $x\in U_i:=f^{-1}(W_i)\cap O_i$, $f^{-1}(f(x))\cap O_i$ est un ouvert dense de $f^{-1}(f(x))$. Notons $U=\underset{i}{\bigcap}U_i$. Pour tout $x\in U$, $f^{-1}(f(x))\cap E$ est dense dans $f^{-1}(f(x))$ car une intersection d\'enombrable d'ouverts denses est dense. Ainsi, pour tout $x\in U\subset E$, $\{x\}\times f^{-1}(f(x))=\overline{\{x\}\times f^{-1}(f(x))\cap E}^{P^\circ}\subset \overline{E\times_NE}^{P^\circ}$ ce qui montre le fait.

Notons maintenant $V=P^\circ\times_NP^\circ-\overline{E\times_NE}^{P^\circ}$ et montrons que cet ouvert de $P^\circ\times_NP^\circ$ est vide. Notons $pr_1:P^\circ\times_NP^\circ\rightarrow P^\circ$ la projection sur la premi\`ere coordonn\'ee.  Si $x\in pr_1(V)$ alors 
 $\{x\}\times f^{-1}(f(x))\not\subset \overline{E\times_NE}^{P^\circ}$. Si $U$ est l'ensemble g\'en\'eral donn\'e dans le fait pr\'ec\'edent alors $pr_1(V)\cap U=\emptyset$. Si $V$ \'etait non vide alors $pr_1(V)$ contiendrait un ouvert de $P^\circ$ ce qui contredirait la vacuit\'e de son intersection avec l'ensemble g\'en\'eral $U$. Donc $E\times_NE$ est dense dans $P^\circ\times_NP^\circ$ qui est lui-m\^{e}me dense dans $P\times_N P$ par d\'efinition. Ceci termine la preuve du lemme.
\end{proof}

\begin{lemma}\label{Rcbiendef}
La sous-vari\'et\'e $\mathcal{R}_k^c(X)$ d\'efinie en (\ref{defRc}) ne d\'epend pas de l'int\'egrale premi\`ere maximale choisie. Elle est de plus invariante sous l'action diagonale du groupe $\Gamma_k$.
\end{lemma}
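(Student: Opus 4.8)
La stratégie sera de déduire les deux assertions du lemme (\ref{prodfibre}) et de l'invariance du prolongement $R_kX$ sous l'action de $\Gamma_k$ exprimée par (\ref{equaGinv}).

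Je commencerais par l'indépendance vis-à-vis de l'intégrale première maximale choisie. Soient $\pi : R_k(M/S) \dashrightarrow N$ et $\pi' : R_k(M/S) \dashrightarrow N'$ deux intégrales premières maximales de $R_kX$. Par définition, $\pi^*\mathbb{C}(N) = \pi'^*\mathbb{C}(N') = \mathbb{C}(R_k(M/S))^{R_kX}$, donc $\pi^*$ et $\pi'^*$ identifient $\mathbb{C}(N)$ et $\mathbb{C}(N')$ à un même sous-corps de $\mathbb{C}(R_k(M/S))$. L'isomorphisme de corps $(\pi^*)^{-1}\circ\pi'^*:\mathbb{C}(N')\to\mathbb{C}(N)$ fournit une application birationnelle $\psi : N \dashrightarrow N'$ vérifiant $\pi' = \psi\circ\pi$ sur un ouvert dense. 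Je choisirais alors un ouvert dense $E\subset R_k(M/S)$ contenu dans les domaines de définition de $\pi$ et de $\pi'$, sur lequel $\psi$ est un isomorphisme sur son image et l'égalité $\pi'=\psi\circ\pi$ est vérifiée. Sur un tel ouvert, les relations $\pi(r)=\pi(s)$ et $\pi'(r)=\pi'(s)$ coïncident, de sorte que $E\times_N E = E\times_{N'}E$. Le lemme (\ref{prodfibre}), appliqué à l'ensemble général $E$, assure que $E\times_N E$ (resp. $E\times_{N'}E$) est dense dans le produit fibré définissant $\mathcal{R}_k^c(X)$ via $\pi$ (resp. via $\pi'$). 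En passant aux adhérences de Zariski, on obtient l'égalité des deux sous-variétés, ce qui établit la première assertion.

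Pour l'invariance sous l'action diagonale, je partirais de (\ref{equaGinv}) : pour $j_k(\gamma)\in\Gamma_k$, on a $S_{j_k(\gamma)}^* R_kX = R_kX$, donc l'automorphisme $S_{j_k(\gamma)}$ de $R_k(M/S)$ préserve le corps $\mathbb{C}(R_k(M/S))^{R_kX}$. Il en résulte que $\pi\circ S_{j_k(\gamma^{-1})}$ est encore une intégrale première maximale de but $N$, puisque $(\pi\circ S_{j_k(\gamma^{-1})})^*\mathbb{C}(N)=S_{j_k(\gamma^{-1})}^*\mathbb{C}(R_k(M/S))^{R_kX}=\mathbb{C}(R_k(M/S))^{R_kX}$. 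Je calculerais ensuite l'image du produit fibré de $\pi$ par l'action diagonale $\Delta_{j_k(\gamma)}$ : un couple $(r,s)$ vérifie $\pi(r)=\pi(s)$ si et seulement si $\Delta_{j_k(\gamma)}(r,s)=(r\circ\gamma,s\circ\gamma)$ vérifie $(\pi\circ S_{j_k(\gamma^{-1})})(r\circ\gamma)=(\pi\circ S_{j_k(\gamma^{-1})})(s\circ\gamma)$, car $S_{j_k(\gamma^{-1})}(r\circ\gamma)=r$. Ainsi $\Delta_{j_k(\gamma)}$ envoie le produit fibré associé à $\pi$ sur celui associé à l'intégrale première maximale $\pi\circ S_{j_k(\gamma^{-1})}$. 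Comme $\Delta_{j_k(\gamma)}$ est un isomorphisme, il commute à l'adhérence de Zariski, et la première assertion déjà démontrée donne $\Delta_{j_k(\gamma)}(\mathcal{R}_k^c(X))=\mathcal{R}_k^c(X)$.

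La difficulté principale réside dans le contrôle des domaines de définition et des adhérences de Zariski lors de la comparaison des produits fibrés issus d'intégrales premières distinctes : c'est précisément pour lever cet obstacle que le lemme (\ref{prodfibre}), garantissant la densité du produit fibré restreint à un ensemble général dans le produit fibré total, a été établi au préalable. Il restera à vérifier que l'ouvert $E$ peut être choisi simultanément adapté aux deux intégrales premières, ce qui ne présente aucune difficulté puisqu'une intersection finie d'ouverts denses demeure un ouvert dense, donc un ensemble général au sens du lemme (\ref{prodfibre}).
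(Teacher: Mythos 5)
Votre preuve est correcte et suit essentiellement la même démarche que celle de l'article : identification birationnelle des bases $N$ et $N'$ puis densité du produit fibré restreint via le lemme (\ref{prodfibre}) pour la première assertion, et utilisation de l'invariance (\ref{equaGinv}) pour reconnaître $\pi\circ S_{j_k(\gamma^{-1})}$ comme intégrale première maximale et conclure par la première assertion pour la seconde. Le seul écart est cosmétique : vous travaillez sur un ouvert commun $E$ adapté aux deux intégrales premières, là où l'article manipule directement le domaine de définition $R_k(M/S)^\circ$, ce qui revient au même.
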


\begin{proof}
Soient $\pi :R_k(M/S)\dashrightarrow N$ et $\pi' :R_k(M/S)\dashrightarrow N'$ deux int\'egrales premi\`eres maximales. Aux \'egalit\'es $\mathbb{C}(N)=\mathbb{C}(R_k(M/S))^{R_kX}=\mathbb{C}(N')$ il est associ\'e le diagramme commutatif : 
\[\xymatrix{
    R_k(M/S)\ar@{-->>}[d]_{\pi} \ar@{-->>}[dr]^{\pi'}  & \\
   N\ar@{-->}[r]^{\sim}_{\psi}&N'
   }\quad\mbox{qui induit sur des ouverts convenables :}\quad
  \xymatrix{
    R_k(M/S)^\circ\ar@{->>}[d]_{\pi} \ar@{->>}[dr]^{\pi'}  & \\
   N^\circ\ar@{->}[r]^{\sim}_{\psi}&N'^\circ
   } 
\]
La commutativit\'e du dernier diagramme donne l'\'egalit\'e des produits fibr\'es $R_k(M/S)^\circ\times_{N^\circ}R_k(M/S)^\circ=R_k(M/S)^\circ\times_{N'^\circ}R_k(M/S)^\circ$. En appliquant le lemme (\ref{prodfibre}) \`a $\pi$ puis \`a $\pi'$ avec l'ouvert $R_k(M/S)^\circ$ qui est en particulier un ensemble g\'en\'eral, nous obtenons les \'egalit\'es $R_k(M/S)\times_{N} R_k(M/S)=\overline{R_k(M/S)^\circ\times_{N^\circ} R_k(M/S)^\circ}=\overline{R_k(M/S)^\circ\times_{N'^\circ} R_k(M/S)^\circ}=R_k(M/S)\times_{N'} R_k(M/S)$. Ce qui conclut la preuve de la premi\`ere assertion.

Montrons la stabilit\'e sous l'action diagonale. Soit $j_k(\gamma)\in \Gamma_k$. Notons $\Delta_{j_k(\gamma)}$ l'action diagonale de $j_k(\gamma)$ sur le produit $R_k(M/S)\times R_k(M/S)$ et montrons que $\Delta_{j_k(\gamma)}(\mathcal{R}_k^c(X))=\mathcal{R}_k^c(X)$. Soit $\pi:R_k(M/S)\dashrightarrow N$ une int\'egrale premi\`ere maximale du prolongement $R_kX$. Notons $R_k(M/S)^\circ$ le domaine de d\'efinition de $\pi$ et $N'$ une copie de $N$ de telle sorte que l'application $\pi\circ S_{j_k(\gamma^{-1})}$ s'\'ecrive $\pi\circ S_{j_k(\gamma^{-1})}:R_k(M/S)\dashrightarrow N'$. Nous avons l'\'egalit\'e $R_k(M/S)^\circ.{j_k(\gamma)}\times_{N'}R_k(M/S)^\circ.{j_k(\gamma)}=\Delta_{j_k(\gamma)}(R_k(M/S)^\circ\times_N R_k(M/S)^\circ)$. De plus l'application $\pi\circ S_{j_k(\gamma^{-1})}$ est aussi une int\'egrale premi\`ere maximale puisque:
\[
\begin{array}{lll}
(\pi\circ S_{j_k(\gamma^{-1})})^*\mathbb{C}(R_k(M/S))&=&{S_{{j_k(\gamma}^{-1})}}^*\mathbb{C}(R_k(M/S))^{R_kX}\\
&=&\{f\circ S_{j_k(\gamma^{-1})}\ \vert\ R_kX(f)=0, \ f\in\mathbb{C}(R_k(M/S))\}\\
&=&\{h\in\mathbb{C}(R_k(M/S))\ \vert\ R_kX(h\circ S_{j_k(\gamma)})=0 \}\\
&=&\{h\in\mathbb{C}(R_k(M/S))\ \vert\  {S_{j_k(\gamma)}}^*R_kX(h)=0 \}\\
&=& \mathbb{C}(R_k(M/S))^{R_kX}
\qquad\mbox{car ${S_{j_k(\gamma)}}^*R_kX=R_kX$}
\end{array}
\]
La non-d\'ependance envers l'int\'egrale premi\`ere maximale choisie nous permet d'obtenir l'\'egalit\'e $\mathcal{R}_k^c(X)=\overline{R_k(M/S)^\circ\times_{N'}R_k(M/S)^\circ}=\overline{\Delta_{j_k(\gamma)}.R_k(M/S)^\circ\times_{N}R_k(M/S)^\circ}$. Sachant que $\Delta_{j_k(\gamma)}$ est un isomorphisme de $R_k(M/S)\times R_k(M/S)$, il vient $\mathcal{R}_k^c(X)=R_k(M/S)\times_{N'}R_k(M/S)=\Delta_{j_k(\gamma)}(R_k(M/S)\times_{N}R_k(M/S))=\Delta_{j_k(\gamma)}(\mathcal{R}_k^c(X))$ ce qui termine la preuve.
\end{proof}

Nous donnons maintenant le lien entre les vari\'et\'es $\mathcal{R}_k^c(X)$ et $\mathcal{G}_k^c(X)$ qui montre que cette derni\`ere est intrins\`eque \`a la distribution.

\begin{lemma}\label{corresRc}
Les vari\'et\'es $\mathcal{R}_k^c(X)$ et $\mathcal{G}_k^c(X)$ se correspondent par la projection $\Phi_k:R_k(M/S)\times R_k(M/S)\rightarrow Aut_k(M/S)$, \textit{i.e.} $\mathcal{G}_k^c(X)=\Phi_k(\mathcal{R}_k^c(X))$ et $\Phi_k^{-1}(\mathcal{G}_k^c(X))=\mathcal{R}_k^c(X)$. 
\end{lemma}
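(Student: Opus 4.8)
The plan is to prove the two equalities in two stages: first establish $\mathcal{G}_k^c(X)=\Phi_k(\mathcal{R}_k^c(X))$, and then deduce $\Phi_k^{-1}(\mathcal{G}_k^c(X))=\mathcal{R}_k^c(X)$ for free. For the second equality, note that by Lemma \ref{Rcbiendef} the variety $\mathcal{R}_k^c(X)$ is invariant under the diagonal action of $\Gamma_k$; since the fibres of $\Phi_k$ are exactly the $\Gamma_k$-orbits (Lemma \ref{quotientalg}), any $\Gamma_k$-invariant subset $Z$ satisfies $\Phi_k^{-1}(\Phi_k(Z))=Z$, so $\Phi_k^{-1}(\mathcal{G}_k^c(X))=\Phi_k^{-1}(\Phi_k(\mathcal{R}_k^c(X)))=\mathcal{R}_k^c(X)$. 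Throughout I use the following translation of the defining condition of $\mathcal{G}_k^c(X)$: if $\phi=\Phi_k(j_k(r),j_k(s))=j_k(s\circ r^{-1})$, then $\phi$ has source $r(0)$, and the frames at $r(0)$ form the single $\Gamma_k$-orbit $\{j_k(r\circ\gamma)\}$, on which $\phi$ acts by $j_k(r\circ\gamma)\mapsto j_k(s\circ\gamma)$. Hence the equation $H_i\circ\phi=H_i$ is equivalent to $H_i(s\circ\gamma)=H_i(r\circ\gamma)$ for every $\gamma\in\Gamma_k$.

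For the inclusion $\Phi_k(\mathcal{R}_k^c(X))\subseteq \mathcal{G}_k^c(X)$, the key observation is that each $H_i\circ S_\gamma$ is again a rational first integral of $R_kX$: by the invariance relation (\ref{equaGinv}), $S_\gamma^*R_kX=R_kX$, so $R_kX(H_i\circ S_\gamma)=(R_kX\,H_i)\circ S_\gamma=0$. Consequently $H_i\circ S_\gamma$ lies in $\mathbb{C}(R_k(M/S))^{R_kX}=\pi^*\mathbb{C}(N)$, and therefore factors as $H_i\circ S_\gamma=G_{i,\gamma}\circ\pi$ for a suitable rational function $G_{i,\gamma}$ on $N$. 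Now if $(j_k(r),j_k(s))\in R_k(M/S)^\circ\times_N R_k(M/S)^\circ$, then $\pi(r)=\pi(s)$, whence $H_i(s\circ\gamma)=G_{i,\gamma}(\pi(s))=G_{i,\gamma}(\pi(r))=H_i(r\circ\gamma)$ for all $\gamma$ and all $i$; thus $\phi=j_k(s\circ r^{-1})$ lies in the set defining $\mathcal{G}_k^c(X)$. This shows $\Phi_k(R_k(M/S)^\circ\times_N R_k(M/S)^\circ)$ is contained in that defining set, and applying the continuous map $\Phi_k$ to $\mathcal{R}_k^c(X)=\overline{R_k(M/S)^\circ\times_N R_k(M/S)^\circ}$ gives $\Phi_k(\mathcal{R}_k^c(X))\subseteq \mathcal{G}_k^c(X)$.

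For the reverse inclusion I would argue generically. Let $\phi$ belong to the defining set, written $\phi=\Phi_k(j_k(r),j_k(s))$ with $(j_k(r),j_k(s))\in R_k(M/S)^\circ\times R_k(M/S)^\circ$. Specialising the condition to $\gamma=\mathrm{id}$ gives $H_i(r)=H_i(s)$ for every $i$. Because the family $(H_i)$ generates $\mathbb{C}(R_k(M/S))^{R_kX}=\pi^*\mathbb{C}(N)$, the induced rational map $\overline{\mathbf H}:N\dashrightarrow\mathbb{A}^n$ is birational onto its image, hence injective over a dense open $N^\flat\subseteq N$. Over the corresponding open set of pairs, the equalities $H_i(r)=H_i(s)$ force $\pi(r)=\pi(s)$, i.e. $(j_k(r),j_k(s))\in R_k(M/S)^\circ\times_N R_k(M/S)^\circ\subseteq\mathcal{R}_k^c(X)$. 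Thus the defining set of $\mathcal{G}_k^c(X)$ and the set $\Phi_k(R_k(M/S)^\circ\times_N R_k(M/S)^\circ)$ coincide on a dense open subset; taking Zariski closures yields $\mathcal{G}_k^c(X)\subseteq \overline{\Phi_k(R_k(M/S)^\circ\times_N R_k(M/S)^\circ)}=\Phi_k(\mathcal{R}_k^c(X))$.

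The main obstacle is the closure bookkeeping that lets the two generic descriptions propagate to an equality of closed varieties. Concretely, I must know that $\Phi_k$ carries the closed $\Gamma_k$-invariant set $\mathcal{R}_k^c(X)$ to a closed set, so that $\Phi_k(\overline{R_k(M/S)^\circ\times_N R_k(M/S)^\circ})=\overline{\Phi_k(R_k(M/S)^\circ\times_N R_k(M/S)^\circ)}$; this is exactly the quotient property behind the correspondence of Lemma \ref{quotientalg}. I also need to control the non-generic locus where $\overline{\mathbf H}$ fails to be injective, which is precisely the role of the density statement of Lemma \ref{prodfibre}: it guarantees that the general fibre product $R_k(M/S)^\circ\times_N R_k(M/S)^\circ$ is dense in the full fibre product, so that discarding the bad locus does not shrink the relevant Zariski closures. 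Combining these with the $\Gamma_k$-invariance from Lemma \ref{Rcbiendef} closes the argument.
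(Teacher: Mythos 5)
Your proof is correct and follows essentially the same route as the paper's: you translate the condition $H_i\circ\phi=H_i$ into the orbit condition $H_i(s\circ\gamma)=H_i(r\circ\gamma)$, use the invariance $S_\gamma^*R_kX=R_kX$ to see that $H_i\circ S_\gamma$ factors through $\pi$, and then pass to closures via the quotient topology, the $\Gamma_k$-invariance of $\mathcal{R}_k^c(X)$, and Lemma \ref{prodfibre}. The only (harmless) organizational difference is that you deduce $\Phi_k^{-1}(\mathcal{G}_k^c(X))=\mathcal{R}_k^c(X)$ from the first equality plus invariance, while the paper proves the two inclusions separately; your explicit mention of the injectivity locus of $(H_1,\ldots,H_n)$ on $N$ makes a step the paper leaves implicit.
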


\begin{proof}
Soient $(H_i)_{1\leq i\leq n}$ une famille g\'en\'eratrice de $\mathbb{C}(R_k(M/S))^{R_kX}$ et $\pi:R_k(M/S)\dashrightarrow N$ une int\'egrale premi\`ere maximale du prolongement $R_kX$. Soit $R_k(M/S)^\circ$ un ouvert de $R_k(M/S)$ sur lequel les applications $H_i$, $\pi$ sont bien d\'efinies. Par d\'efinition, $\mathcal{R}_k^c(X)$ et $\mathcal{G}_k^c(X)$ sont les cl\^{o}tures respectives des ensembles  $R_k(M/S)^\circ\times_NR_k(M/S)^\circ$ et $\{\phi\in \Phi_k(R_k(M/S)^\circ\times R_k(M/S)^\circ)\ \vert\ \forall i,\ H_i\circ\phi=H_i\}$.  Commen\c{c}ons par montrer que ces ensembles se correspondent par $\Phi_k$.
Soient $(j_k(r),j_k(s))\in (R_k(M/S)^\circ)^2$ et $j_k(\phi)=\Phi_k(j_k(r),j_k(s))$. Les \'equivalences suivantes montrent la correspondance: 
\[
\begin{array}{ccl}
&&\forall i=1,\ldots,n,\ H_i\circ\phi=H_i\\
&\Leftrightarrow & \pi(j_k(r)=\pi(j_k(s))\\
&\Leftrightarrow & \forall j_k(\gamma)\in \Gamma_k,\ (j_k(r\circ\gamma),j_k(s\circ\gamma))\in (R_k(M/S)^\circ)^2,\ \pi(j_k(r\circ\gamma))=\pi(j_k(s\circ\gamma)) 
\end{array}
\]
o\`{u} la derni\`ere \'equivalence d\'ecoule du fait que ${S_{j_k(\gamma)}}^*\pi$ est aussi une int\'egrale premi\`ere maximale.
Nous avons l'inclusion $\Phi^{-1}(\mathcal{G}_k^c(X))\supset\mathcal{R}_k^c(X)$ car $\Phi^{-1}(\mathcal{G}_k^c(X))$ est ferm\'e et contient $R_k(M/S)^\circ\times_N R_k(M/S)^\circ$. La stabilit\'e de la sous-vari\'et\'e $\mathcal{R}_k^c(X)$ sous l'action diagonale donne $\Phi_k^{-1}\Phi_k(\mathcal{R}_k^c(X))=\mathcal{R}_k^c(X)$. L'image $\Phi(\mathcal{R}_k^c(X))$ est ferm\'ee puisque la topologie sur $Aut_k(M/S)$ est la topologie quotient. Nous avons l'inclusion $\mathcal{G}_k^c(X)\subset\Phi(\mathcal{R}_k^c(X))$ car $\Phi(\mathcal{R}_k^c(X))$ est un ferm\'e qui contient $\{\phi\in \Phi_k(R_k(M/S)^\circ\times R_k(M/S)^\circ)\ \vert\ \forall i,\ H_i\circ\phi=H_i\}$. Ceci donne l'inclusion $\Phi^{-1}(\mathcal{G}_k^c(X))\subset\mathcal{R}_k^c(X)$ et termine la preuve.
\end{proof}

\subsection{\'Equivalence des trois d\'efinitions}

\begin{theorem}\label{\'egalit\'e}
Soient $S$ et $M$ deux vari\'et\'es complexes, lisses, irr\'eductibles, $\rho:M \rightarrow S$ un morphisme lisse \`a fibres connexes et $X$ un champ de vecteurs sur $M$ tangent aux fibres de $\rho$.
Les trois d\'efinitions de groupo\"{i}de donn\'es dans la sous-section pr\'ec\'edente sont \'equivalentes, \textit{i.e.}  $\mathcal{G}_k^a(X)=\mathcal{G}_k^b(X)=\mathcal{G}_k^c(X)$. Cette sous-vari\'et\'e de $Aut_k(M/S)$ est appel\'e le groupo\"{i}de de Galois d'ordre $k$. Elle est not\'ee $Gal_k(X/S)$.
\end{theorem}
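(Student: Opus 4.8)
Le plan est de se ramener, via les correspondances des lemmes (\ref{corresRa}), (\ref{corresRb}) et (\ref{corresRc}), à prouver l'égalité des sous-variétés associées dans $R_k(M/S)\times R_k(M/S)$, c'est-à-dire $\mathcal{R}_k^a(X)=\mathcal{R}_k^b(X)=\mathcal{R}_k^c(X)$. En effet, ces trois lemmes donnent $\Phi_k^{-1}(\mathcal{G}_k^\bullet(X))=\mathcal{R}_k^\bullet(X)$ et $\mathcal{G}_k^\bullet(X)=\Phi_k(\mathcal{R}_k^\bullet(X))$ pour $\bullet\in\{a,b,c\}$, de sorte que l'égalité des trois $\mathcal{R}$ entraîne immédiatement celle des trois $\mathcal{G}$. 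Tout le travail se fait donc au niveau du produit $R_k(M/S)\times R_k(M/S)$, où l'on dispose des opérations $e$, $m$, $inv$ et du champ $0+R_kX$.

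J'établirais d'abord la chaîne d'inclusions faciles $\mathcal{R}_k^b(X)\subseteq\mathcal{R}_k^a(X)\subseteq\mathcal{R}_k^c(X)$. Pour la première, j'observerais que $\mathcal{R}_k^a(X)$ est une relation d'équivalence au-dessus d'un ouvert $U^a$, donc réflexive: elle contient $e(U^a)$ dont l'adhérence est $e(M)$ puisque $M$ est irréductible. Étant en outre tangente à $0+R_kX$, la minimalité de $\mathcal{R}_k^b(X)$ parmi les sous-variétés tangentes contenant l'identité donne l'inclusion. Pour la seconde, je vérifierais que $\mathcal{R}_k^c(X)$ est elle-même une relation d'équivalence au-dessus d'un ouvert: la réflexivité vient de la diagonale $\{(r,r)\}$, la symétrie de l'échange des facteurs et la transitivité de la relation $\pi(r)=\pi(s)$; elle est tangente à $0+R_kX$ car $\pi$ est une intégrale première, le champ $0+R_kX$ ne déplaçant que le second facteur le long de $R_kX$, qui préserve les niveaux de $\pi$. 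La minimalité de $\mathcal{R}_k^a(X)$ parmi les relations d'équivalence tangentes fournit alors $\mathcal{R}_k^a(X)\subseteq\mathcal{R}_k^c(X)$.

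L'inclusion difficile est $\mathcal{R}_k^c(X)\subseteq\mathcal{R}_k^b(X)$. L'idée est que $\mathcal{R}_k^b(X)$, étant tangente à $0+R_kX$ et contenant la diagonale, contient l'orbite $\{(r,exp(\tau R_kX).r)\ \vert\ \tau\}$ de tout point diagonal. Pour $r$ général, la fibre de $\mathcal{R}_k^b(X)$ au-dessus de $r$ par la première projection est fermée, donc contient l'adhérence de Zariski de l'orbite de $r$ sous $R_kX$. Le point crucial est d'identifier cette adhérence d'orbite générique avec la fibre $\pi^{-1}(\pi(r))$ de l'intégrale première maximale: c'est la propriété fondamentale des intégrales premières maximales (la plus petite sous-variété invariante par $R_kX$ passant par un point général, soit $V(R_kX,\{r\})$ au sens de Bonnet, coïncide avec la fibre de $\pi$), conséquence de la maximalité de $\pi$ et de la construction de \cite[corollary 2.6]{Bonnet}. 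On en déduit que les fibres générales des premières projections de $\mathcal{R}_k^b(X)$ et de $\mathcal{R}_k^c(X)$ coïncident; comme $\mathcal{R}_k^b(X)$ est irréductible et déjà contenue dans $\mathcal{R}_k^c(X)$, l'égalité des fibres générales force $\mathcal{R}_k^c(X)\subseteq\mathcal{R}_k^b(X)$. Les lemmes (\ref{fibredense}) et (\ref{prodfibre}) interviennent ici pour contrôler la densité des produits fibrés et justifier le passage des fibres générales aux adhérences.

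Le principal obstacle est cette dernière inclusion, et plus précisément l'identification de l'adhérence d'une orbite générique de $R_kX$ avec la fibre de l'intégrale première maximale $\pi$: c'est elle qui encode le contenu « géométrique » du théorème, les deux premières inclusions n'étant que des conséquences formelles des propriétés de minimalité. Toute la difficulté technique — généricité, densité des produits fibrés, passage à la limite de Zariski — se trouve concentrée dans les lemmes préparatoires (\ref{fibredense}) et (\ref{prodfibre}) et dans le recours au résultat de Bonnet sur les sous-variétés minimales invariantes.
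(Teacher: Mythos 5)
Votre proposition est correcte et suit essentiellement la même démarche que la preuve du papier : réduction aux sous-variétés $\mathcal{R}_k^\bullet(X)$ via les lemmes de correspondance, puis le cycle d'inclusions $\mathcal{R}_k^b\subset\mathcal{R}_k^a$, $\mathcal{R}_k^a\subset\mathcal{R}_k^c$ et $\mathcal{R}_k^c\subset\mathcal{R}_k^b$, cette dernière reposant sur le théorème de Bonnet identifiant la fibre générique de l'intégrale première maximale à la sous-variété minimale tangente, combiné au lemme (\ref{prodfibre}). Seule nuance : la vérification que $\mathcal{R}_k^c(X)$ est une relation d'équivalence au-dessus d'un ouvert de $M$ demande un peu plus de soin que vous ne le suggérez (le papier utilise la transitivité de l'action de $\Gamma_k$ et son irréductibilité pour ramener trois repères quelconques dans un même translaté du domaine de définition de $\pi$), mais cela ne change pas la structure de l'argument.
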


\begin{proof}
Gr\^{a}ce au lemmes (\ref{corresRa}), (\ref{corresRb}) et (\ref{corresRc}), il suffit de montrer les \'egalit\'es $\mathcal{R}_k^a(X)=\mathcal{R}_k^b(X)=\mathcal{R}_k^c(X)$. Nous montrons dans l'ordre: $\mathcal{R}_k^b(X)\subset  \mathcal{R}_k^a(X)$, $\mathcal{R}_k^c(X)\subset \mathcal{R}_k^b(X)$ et $\mathcal{R}_k^a(X)\subset  \mathcal{R}_k^c(X)$.

La vari\'et\'e $\mathcal{R}_k^a(X)$ contient la diagonale de $R_k(M/S)\times R_k(M/S)$ et est tangente au prolongement $0+R_kX$. Elle contient $\mathcal{R}_k^b(X)$ par minimalit\'e de cette derni\`ere.

Soit $\pi:R_k(M/S)\dashrightarrow N$ une int\'egrale premi\`ere maximale de $R_kX$. Le th\'eor\`eme \cite[theorem 1.1.]{Bonnet} nous donne l'existence d'un ensemble g\'en\'eral $E\subset R_k(M/S)$ tel que pour tout $j_k(r)\in E$, $\overline{\pi^{-1}(\pi(j_k(r)))}$ est $V(R_kX,j_k(r))$ la sous-vari\'et\'e minimale tangente \`a $R_kX$ contenant $j_k(r)$. Par d\'efinition, la sous-vari\'et\'e $\mathcal{R}_k^b(X)$ contient la diagonale de $R_k(M/S)\times R_k(M/S)$ et est tangente \`a $0+R_kX$. Pour tout $j_k(r)\in E$, $\{j_k(r)\}\times V(R_kX,j_k(r))\subset \mathcal{R}_k^b(X)$. Donc $E\times_NE\subset \{\{j_k(r)\}\times V(R_kX,j_k(r))\ \vert\ j_k(r)\in E\}\subset \mathcal{R}_k^b(X)$. Gr\^{a}ce au lemme (\ref{prodfibre}) nous obtenons $\mathcal{R}_k^c(X)=\overline{E\times_NE}\subset \mathcal{R}_k^b(X)$.

Il reste la derni\`ere inclusion $\mathcal{R}_k^a(X)\subset \mathcal{R}_k^c(X)$ \`a montrer. Soient $\pi:R_k(M/S)\dashrightarrow N$ une int\'egrale premi\`ere maximale du prolongement $R_kX$ et $R_k(M/S)^\circ$ son domaine de d\'efinition. L'ensemble $R_k(M/S)^\circ\times_{N} R_k(M/S)^\circ$ est tangent au champ $0+R_kX$ puisque le champ $R_kX$ est tangent aux fibres de $\pi$. La propri\'et\'e de tangence est une propri\'et\'e ferm\'ee. La vari\'et\'e $\mathcal{R}_k^c(X)$ est tangente au champ $0+R_kX$. Montrons que cette vari\'et\'e est une relation d'\'equivalence au-dessus d'un ouvert. Ceci nous permettra de conclure sur l'inclusion cherch\'ee gr\^{a}ce \`a la minimalit\'e de la vari\'et\'e $\mathcal{R}_k^a(X)$. Quitte \`a r\'eduire l'ouvert $R_k(M/S)^\circ$, nous pouvons supposer que son image $M^\circ$ par la projection $R_k(M/S)\rightarrow M$ soit un ouvert de $M$.

\textit{Fait: Soit $(j_k(r_1),j_k(r_2),j_k(r_3))\in R^k(M^\circ/S)^3$. Il existe $j_k(\gamma)\in \Gamma_k$ tel que $(j_k(r_1),j_k(r_2),j_k(r_3))\in \left(R_k(M/S)^\circ.j_k(\gamma)\right)^3$}. 

 Pour $i=1,2,3$, les ensembles $\{j_k(\gamma)\in \Gamma_k,\ j_k(r_i).j_k(\gamma)^{-1}\in R_k(M/S)^\circ\}$ sont trois ouverts de $\Gamma_k$. Chacun d'eux est non vide gr\^{a}ce \`a la transitivit\'e de l'action de $\Gamma_k$ sur chaque fibre de $R_k(M^\circ/S)\rightarrow M^\circ$. L'irr\'eductibilit\'e de $\Gamma_k$ implique que leur intersection est non vide. Ce qui montre le fait.

Soit $(j_k(r_1),j_k(r_2),j_k(r_3))\in R_k(M^\circ/S)^3$. Notons $\Delta$ l'action diagonale de $\Gamma_k$. Le fait nous donne l'existence d'un \'el\'ement $j_k(\gamma)\in\Gamma_k$ tel que $(j_k(r_1),j_k(r_2))\in \Delta_{j_k(\gamma)}(R_k(M/S)^\circ\times R_k(M/S)^\circ)$ et $(j_k(r_2),j_k(r_3))\in \Delta_{j_k(\gamma)}(R_k(M/S)^\circ\times R_k(M/S)^\circ)$. Soit $N'$ une copie de $N$ de telle sorte que l'application $\pi\circ S_{j_k(\gamma^{-1})}$ s'\'ecrive $\pi\circ S_{j_k(\gamma^{-1})}:R_k(M/S)\dashrightarrow N'$. Le lemme (\ref{Rcbiendef}) nous dit que $\mathcal{R}_k^c(X)\cap\Delta_{j_k(\gamma)}(R_k(M/S)^\circ\times R_k(M/S)^\circ)=R_k(M/S)^\circ.j_k(\gamma)\times_{N'}R_k(M/S)^\circ.j_k(\gamma)$. Nous obtenons ainsi les implications 
\[
\begin{array}{ccl}
(j_k(r_1),j_k(r_2))\in \mathcal{R}_k^c(X)\vert_{M^\circ\times M^\circ}&\implies &\pi(j_k(r_1\circ\gamma^{-1}))=\pi(j_k(r_2\circ\gamma^{-1}))\\
&\implies & (j_k(r_2),j_k(r_1))\in \mathcal{R}_k^c(X)\vert_{M^\circ\times M^\circ}
\end{array}
\]
et
\[
\begin{array}{ccl}
&(j_k(r_1),j_k(r_2))\in \mathcal{R}_k^c(X)\vert_{M^\circ\times M^\circ}\mbox{ et } (j_k(r_2),j_k(r_3))\in \mathcal{R}_k^c(X)\vert_{M^\circ\times M^\circ}\\
\implies &\pi(j_k(r_1\circ\gamma^{-1}))=\pi(j_k(r_2\circ\gamma^{-1}))\mbox{ et }\pi(j_k(r_2\circ\gamma^{-1}))=\pi(j_k(r_3\circ\gamma^{-1}))\\
\implies &\pi(j_k(r_1\circ\gamma^{-1}))=\pi(j_k(r_3\circ\gamma^{-1}))\\
\implies & (j_k(r_1),j_k(r_3))\in \mathcal{R}_k^c(X)\vert_{M^\circ\times M^\circ}
\end{array}
\]
qui nous donnent la stabilit\'e de la vari\'et\'e $\mathcal{R}_k^c(X)\vert_{M^\circ\times M^\circ}$ par inversion et par composition partielle. Autrement dit, la vari\'et\'e $\mathcal{R}_k^c(X)\vert_{M^\circ\times M^\circ}$ est une relation d'\'equivalence, ce qui conclut la preuve de la derni\`ere inclusion.
\end{proof}

\begin{proposition}\label{dimgroup}
Soit $v\in\mathbb{N}$ le maximum des dimensions des sous-vari\'et\'es minimales tangentes \`a $R_kX$ (voir \cite{Bonnet}). Le groupo\"{i}de de Galois d'ordre $k$ du champ $X$ est une vari\'et\'e irr\'eductible de dimension $\dim(R_k(M/S))+v-\dim(\Gamma_k)$.
\end{proposition}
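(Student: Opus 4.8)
Le plan est de d\'eduire les deux assertions de deux descriptions diff\'erentes du m\^eme groupo\"ide. Pour l'\textbf{irr\'eductibilit\'e}, j'utiliserais la d\'efinition topologique : par l'\'equivalence des trois d\'efinitions, $Gal_k(X/S)=\mathcal{G}_k^b(X)=V(R_k^tX,e(M))$ est une sous-vari\'et\'e minimale tangente au sens de \cite{Bonnet}, et le corollaire \cite[corollary 2.6]{Bonnet} affirme qu'une telle vari\'et\'e est irr\'eductible. L'irr\'eductibilit\'e serait donc imm\'ediate, sans calcul.

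Pour la \textbf{dimension}, je passerais \`a la description via les int\'egrales premi\`eres, en travaillant sur le produit fibr\'e $\mathcal{R}_k^c(X)$ reli\'e \`a $Gal_k(X/S)$ par le lemme \ref{corresRc}. Je commencerais par fixer une int\'egrale premi\`ere maximale $\pi:R_k(M/S)\dashrightarrow N$ de $R_kX$, de domaine $R_k(M/S)^\circ$. L'\'etape cl\'e serait d'identifier la dimension de la fibre g\'en\'erale de $\pi$ au nombre $v$ : le th\'eor\`eme \cite[theorem 1.1]{Bonnet} identifie, sur un ensemble g\'en\'eral, cette fibre \`a la sous-vari\'et\'e minimale tangente $V(R_kX,j_k(r))$, et il faudrait v\'erifier que sa dimension g\'en\'erique r\'ealise le maximum $v$ (la dimension ne pouvant chuter que sur le lieu ferm\'e strict de d\'eg\'en\'erescence de $R_kX$). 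On en tirerait $\dim N=\dim R_k(M/S)-v$. En fibrant ensuite $\mathcal{R}_k^c(X)=\overline{R_k(M/S)^\circ\times_N R_k(M/S)^\circ}$ (d\'efinition \ref{defRc}) au-dessus de $N$, dont la fibre g\'en\'erale est $\pi^{-1}(w)\times\pi^{-1}(w)$ de dimension $2v$, j'obtiendrais $\dim\mathcal{R}_k^c(X)=(\dim R_k(M/S)-v)+2v=\dim R_k(M/S)+v$.

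Il resterait \`a contr\^oler la chute de dimension lors du passage au quotient $\Phi_k$. J'invoquerais le lemme \ref{quotientalg} : les fibres de $\Phi_k$ sont les orbites de l'action diagonale de $\Gamma_k$, et comme $\mathcal{R}_k^c(X)$ est invariante sous cette action (lemme \ref{Rcbiendef}), la restriction $\Phi_k:\mathcal{R}_k^c(X)\to Gal_k(X/S)$ serait surjective, de fibres les orbites. Puisque $R_k(M/S)$ est un fibr\'e principal sous $\Gamma_k$, l'action y est libre, donc l'action diagonale sur le produit l'est aussi ; chaque orbite serait alors isomorphe \`a $\Gamma_k$, de dimension $\dim\Gamma_k$. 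Le th\'eor\`eme sur la dimension des fibres (applicable car toutes les fibres sont \'equidimensionnelles) donnerait enfin $\dim Gal_k(X/S)=\dim\mathcal{R}_k^c(X)-\dim\Gamma_k=\dim R_k(M/S)+v-\dim\Gamma_k$.

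Le point le plus d\'elicat me semble \^etre la justification que la dimension g\'en\'erique des sous-vari\'et\'es minimales tangentes co\"incide avec leur maximum $v$, autrement dit que la fibre g\'en\'erale de $\pi$ r\'ealise ce maximum : c'est l\`a que r\'eside le contenu de \cite{Bonnet}, et c'est le seul endroit o\`u il faut \^etre soigneux sur le choix du lieu g\'en\'eral. En comparaison, la libert\'e de l'action diagonale est une cons\'equence directe de la structure de fibr\'e principal, et le calcul de dimension par fibration est routinier une fois ces deux faits \'etablis.
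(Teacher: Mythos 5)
Votre d\'emonstration suit essentiellement la m\^eme d\'emarche que celle du papier : l'irr\'eductibilit\'e par la d\'efinition topologique et \cite[corollary 2.6]{Bonnet}, puis la dimension via une int\'egrale premi\`ere maximale, le calcul $\dim(R_k(M/S)\times_N R_k(M/S))=\dim R_k(M/S)+v$ reposant sur \cite[theorem 1.1]{Bonnet}, et enfin la chute de $\dim(\Gamma_k)$ au passage au quotient par $\Phi_k$. Vous explicitez simplement davantage la libert\'e de l'action diagonale et le fait que la fibre g\'en\'erique de $\pi$ r\'ealise le maximum $v$, points que le papier d\'el\`egue directement \`a \cite{Bonnet}.
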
 

\begin{proof}
Nous savons par la d\'efinition dite topologique que le groupo\"{i}de de Galois est la sous-vari\'et\'e minimale tangente \`a $0+R_kX$ qui contient l'identit\'e. Le corollaire \cite[corollary 2.6]{Bonnet} nous dit que cette sous-vari\'et\'e est irr\'eductible. Utilisons la d\'efinition avec les int\'egrales premi\`eres pour le calcul de la dimension. Soit $\pi:R_k(M/S)\dashrightarrow N$ une int\'egrale premi\`ere maximale. Le th\'eor\`eme \cite[theorem 1.1]{Bonnet} nous dit que les fibres g\'en\'eriques sont de dimension $v$ et que la vari\'et\'e $N$ est de dimension $\dim(R_k(M/S))-v$. Le produit fibr\'e $R_k(M/S)\times_NR_k(M/S)$ est de dimension $2\dim(R_k(M/S))-(\dim(R_k(M/S))-v)=\dim(R_k(M/S))+v$. Le groupo\"{i}de de Galois \'etant l'image de ce produit fibr\'e par $\Phi_k$, il est donc de dimension $\dim(R_k(M/S))+v-\dim(\Gamma_k)$.
\end{proof}

\subsection{Le groupo\"ide de Galois}
\begin{lemma}\label{malproj}
La restriction de la projection $Aut_k(M/S)\rightarrow Aut_{k-1}(M/S)$ au groupo\"{i}de de Galois d'ordre $k$ induit un morphisme dominant $Gal_k(X/S)\rightarrow Gal_{k-1}(X/S)$.
\end{lemma}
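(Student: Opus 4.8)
The plan is to work with the ``topological'' description of the Galois groupoid (Définition \ref{Rb}), namely $Gal_k(X/S)=\mathcal{G}_k^b(X)=V(R_k^tX,e(M))$, the smallest subvariété of $Aut_k(M/S)$ tangent to the prolongation $R_k^tX$ and containing the identity section $e(M)$; recall from \cite[corollary 2.6]{Bonnet} that such a minimal tangent subvariété exists and is irreducible. Writing $\pi_k\colon Aut_k(M/S)\to Aut_{k-1}(M/S)$ for the truncation morphism, the one structural input I would establish first is that $\pi_k$ intertwines the two prolongations, i.e.\ that $R_k^tX$ and $R_{k-1}^tX$ are $\pi_k$-related:
\[
R_k^tX\circ \pi_k^{*}=\pi_k^{*}\circ R_{k-1}^tX .
\]
Via the commuting square relating $\Phi_k$, $\Phi_{k-1}$ and the truncations, this reduces to the fact that $0+R_kX$ projects to $0+R_{k-1}X$ under $R_k(M/S)\times R_k(M/S)\to R_{k-1}(M/S)\times R_{k-1}(M/S)$, which is itself a consequence of the relations (\ref{relationderivtot}) and of the fact that $RX$ preserves the filtration by order and restricts to $R_kX$ on each $\mathcal{O}_{R_k(M/S)}$.

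Granting this, I would first check that $\pi_k$ restricts to a morphism $Gal_k(X/S)\to Gal_{k-1}(X/S)$, i.e.\ that $\pi_k(Gal_k(X/S))\subset Gal_{k-1}(X/S)$. For this I consider the closed subvariété $\pi_k^{-1}(Gal_{k-1}(X/S))$. Its ideal is generated by the functions $\pi_k^{*}g$ with $g\in I(Gal_{k-1}(X/S))$, and the intertwining identity gives $R_k^tX(\pi_k^{*}g)=\pi_k^{*}(R_{k-1}^tX(g))$, which lies in $\pi_k^{*}I(Gal_{k-1}(X/S))$ since $Gal_{k-1}(X/S)$ is tangent to $R_{k-1}^tX$. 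As $R_k^tX$ is a derivation, Leibniz shows it preserves this ideal, so $\pi_k^{-1}(Gal_{k-1}(X/S))$ is tangent to $R_k^tX$; it also contains $e(M)$ because $\pi_k(e(M))=e(M)$. By minimality of $Gal_k(X/S)$ among tangent subvariétés through $e(M)$, we get $Gal_k(X/S)\subset \pi_k^{-1}(Gal_{k-1}(X/S))$, hence $\pi_k(Gal_k(X/S))\subset Gal_{k-1}(X/S)$.

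For dominance I would run the symmetric argument on the image. Set $W=\overline{\pi_k(Gal_k(X/S))}$, a closed subvariété of $Aut_{k-1}(M/S)$ whose ideal is $I(W)=(\pi_k^{*})^{-1}I(Gal_k(X/S))$. For $g\in I(W)$ one has $\pi_k^{*}g\in I(Gal_k(X/S))$, and the same intertwining identity together with the tangency of $Gal_k(X/S)$ to $R_k^tX$ yields $\pi_k^{*}(R_{k-1}^tX(g))=R_k^tX(\pi_k^{*}g)\in I(Gal_k(X/S))$, i.e.\ $R_{k-1}^tX(g)\in I(W)$. Thus $W$ is tangent to $R_{k-1}^tX$ and contains $e(M)$, so minimality of $Gal_{k-1}(X/S)$ forces $Gal_{k-1}(X/S)\subset W$. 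Combined with the previous paragraph this gives $W=Gal_{k-1}(X/S)$, which is exactly the dominance of the induced morphism.

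The only genuinely non-formal step is the intertwining identity $d\pi_k\,R_k^tX=R_{k-1}^tX$; everything else is a direct consequence of the minimality characterising $\mathcal{G}^b$ and of the elementary fact that in characteristic zero a derivation stabilising an ideal also stabilises its radical, so that passing between reduced structures causes no trouble. I expect the main care to be needed in tracking the prolongations through $\Phi_k$ and in confirming that the tangency used in Bonnet's framework is indeed the ideal-stability statement used above; the irreducibility of both groupoids, guaranteed by \cite[corollary 2.6]{Bonnet}, ensures there are no spurious components to worry about.
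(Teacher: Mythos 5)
Your argument is correct and follows essentially the same route as the paper: the key intertwining relation $d\pi_k.R_k^tX=R_{k-1}^tX$, then minimality of $Gal_k(X/S)$ applied to $\pi_k^{-1}(Gal_{k-1}(X/S))$ for one inclusion and minimality of $Gal_{k-1}(X/S)$ applied to $\overline{\pi_k(Gal_k(X/S))}$ for the other. The only cosmetic difference is that the paper justifies the intertwining by noting the flows coincide, $\pi_k\circ j_k(\exp(\tau X))=j_{k-1}(\exp(\tau X))$, whereas you derive it from the relations (\ref{relationderivtot}); both are valid.
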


\begin{proof}
Notons $\pi_k$ la projection $Aut_k(M/S)\rightarrow Aut_{k-1}(M/S)$. Il suffit de montrer que $\overline{\pi_k(Gal_k(X/S))}=Gal_{k-1}(X/S)$. Remarquons d\'ej\`a que $d{\pi_k}.R_k^tX=R_{k-1}^tX$ puisque leurs flots sont les m\^{e}mes, \`a savoir: $\pi_k\circ j_k(\exp(\tau X))=j_{k-1}(\exp(\tau X))$. Nous savons aussi que l'image de l'identit\'e de $Aut_k(M/S)$ par $\pi_k$ est l'identit\'e de $Aut_{k-1}(M/S)$. Nous en d\'eduisons que $\overline{\pi(Gal_k(X/S))}$ est une vari\'et\'e contenant l'identit\'e et tangente \`a $R_{k-1}^tX$ puisque la tangence est une propri\'et\'e ferm\'ee. La minimalit\'e de $Gal_{k-1}(X/S)$ implique l'inclusion  $\overline{\pi(Gal_k(X/S))}\supset Gal_{k-1}(X/S)$. Inversement, la vari\'et\'e $\pi_k^{-1}(Gal_{k-1}(X/S))$ contient l'identit\'e de $Aut_k(M/S)$  et est tangente \`a $R_k^tX$. La minimalit\'e de $Gal_k(X/S)$ implique l'inclusion $Gal_k(X/S)\subset \pi_k^{-1}(Gal_{k-1}(X/S))$. En prenant l'image par $\pi_k$ de part et d'autre de l'inclusion cela donne l'inclusion $\pi_k(Gal_k(X/S))\subset Gal_{k-1}(X/S)$ ce qui termine la preuve.
\end{proof}

Ce lemme permet de donner la d\'efinition suivante:

\begin{definition}\label{maldef}
Le groupo\"ide de Galois $Gal(X/S)$ est la limite projective de la famille des groupo\"{i}des de Galois d'ordre $k$: $(Gal_k(X/S))_k$. Lorsque l'espace des param\`etres $S$ est r\'eduit \`a un point, il est not\'e  $Gal(X)$.
\end{definition}
La d\'efinition du groupo\"{i}de de Galois d'ordre $k$ avec les int\'egrales premi\`eres du champ $X$ am\`ene \`a d\'ecrire le groupo\"ide de Galois de la fa\c{c}on suivante: 
\[
Gal(X/S)=\{\phi\in Aut(M/S)\ \vert\ \forall k\in\mathbb{N},\ \forall H\in\mathbb{C}(R_k(M/S))^{R_kX},\ H\circ j_k(\phi)=H\}
\]

\begin{example}\label{exinvvect}
Soit $Y$ un champ de vecteurs sur $M$ tangent aux fibres de $\rho$. Il d\'efinit $m$ fonctions $H_i\in\mathbb{C}(R_1(M/S))$ donnant les coordonn\'ees de $j_0(r^\ast Y )$:
\[
j_0(r^*Y)=\underset{i}{\sum}H_i(j_1r)\frac{\partial}{\partial \e_i}
\]
Le champ $Y$ est invariant par le champ $X$, \textit{i.e.} $\mathcal{L}_XY=0$, si et seulement si $R_1X(H_i)=0$ pour tout $i$.
 En particulier, si $Y=X$ alors $R_1X(H_i)=0$ pour tout $i$.
 
Pour $\phi\in Aut(M/S)$, par d\'efinition des fonctions $H_i$, $j_0((\phi\circ r)^*Y)=\underset{i}{\sum}H_i(j_1(\phi\circ r))\frac{\partial}{\partial \e_i}$. Ainsi $\phi^*Y=Y$ si et seulement si $H_i \circ R_1(\phi)=H_i$ pour tout $i$. 
\end{example}
\begin{example}\label{exinvforme}
Soit $\omega\in\Omega^1(M/S)$ le dual de l'espace des d\'erivations $\mathcal{O}_S$-lin\'eaire. Cette $1$-forme d\'efinit $m$ fonctions $H_i\in\mathbb{C}(R_1(M/S))$ par 
\[
j_0(r^\ast \omega) = \sum H_i(j_1r) d\epsilon_i
\]
La $1$-forme $\omega$ est invariante par $X$, \textit{i.e.} $\mathcal{L}_X\omega=0$, si et seulement si $R_1X(H_i)=0$ pour tout $i$.
Pour $\phi\in Aut(M/S)$, nous avons l'\'equivalence $\phi^*\omega=\omega$ si et seulement si $H_i\circ R_1\phi=H_i$ pour tout $i$. 
\end{example}

La proposition qui va suivre fait le lien avec entre notre d\'efinition de groupo\"ide de Galois et celle donn\'ee par B. Malgrange (\cite{Malgrange}) dans le cas o\`{u} l'espace des param\`etres $S$ est r\'eduit \`a un point. Commen\c{c}ons par un lemme. L'espace des op\'erateurs diff\'erentiels $\mathcal{D}_{M/S}$ agit sur $\mathcal{O}_{Aut(M/S)}$ (voir la sous-section \ref{structalg}). 

\begin{lemma}\label{lemidealdiff}
Soit $I\subset \mathcal{O}_{Aut(M/S)}$ l'id\'eal associ\'e au groupo\"ide de Galois $Gal(X/S)$. Cet id\'eal est $\mathcal{D}_{M/S}$-invariant. Nous dirons que le groupo\"ide de Galois est une $\mathcal{D}_{M/S}$-sous-pro-vari\'et\'e de $Aut(M/S)$.
\end{lemma}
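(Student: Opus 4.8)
Le plan est de se ramener, grâce au lemme \ref{lemdiff}, à une question de tangence sur le produit $R(M/S)\times R(M/S)$. Ce lemme affirme en effet que $I$ est $\mathcal{D}_{M/S}$-invariant si et seulement si l'idéal $\Phi^*I$ de $\mathcal{O}_{R(M/S)\times R(M/S)}$ est invariant par chacune des dérivations totales $D_i:=\partial_i\otimes 1+1\otimes\partial_i$, pour $1\leq i\leq m$. Or $\Phi^*I$ est l'idéal de la sous-variété $\Phi^{-1}(Gal(X/S))$, qui est la limite projective des $\mathcal{R}_k^c(X)$ (théorème \ref{\'egalit\'e} et lemme \ref{corresRc}). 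La description de $Gal(X/S)$ par les intégrales premières donnée après la définition \ref{maldef} montre qu'elle s'écrit
\[
\Phi^{-1}(Gal(X/S))=\{(r,s)\ \vert\ \forall k,\ \forall H\in\mathbb{C}(R_k(M/S))^{R_kX},\ H(r)=H(s)\}.
\]
Sur un ouvert dense adéquat, l'idéal de cette sous-variété est donc engendré par les différences $H\circ pr_1-H\circ pr_2$, où $H$ parcourt le corps des intégrales premières $\mathbb{C}(R(M/S))^{RX}$. Il suffit de vérifier que chaque $D_i$ préserve cet idéal.

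L'ingrédient conceptuel serait que la dérivation $\partial_i$ stabilise le corps des intégrales premières. Ceci découlerait directement de la relation de commutation (\ref{relationderivtot}) : si $H\in\mathbb{C}(R_k(M/S))^{R_kX}$, alors
\[
R_{k+1}X(\partial_i H)=\partial_i\bigl(R_kX(H)\bigr)=0,
\]
de sorte que $\partial_i H\in\mathbb{C}(R_{k+1}(M/S))^{R_{k+1}X}\subset\mathbb{C}(R(M/S))^{RX}$. Autrement dit, $\partial_i$ envoie toute intégrale première sur une intégrale première, d'ordre supérieur d'une unité.

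On conclurait alors par un calcul sur les générateurs. Comme $D_i$ est une dérivation,
\[
D_i\bigl(H\circ pr_1-H\circ pr_2\bigr)=(\partial_i H)\circ pr_1-(\partial_i H)\circ pr_2,
\]
qui est à nouveau une différence d'intégrales premières, donc appartient à l'idéal. La règle de Leibniz propage l'invariance des générateurs à l'idéal qu'ils engendrent. La tangence étant une condition fermée, $D_i$ resterait tangent à l'adhérence $\Phi^{-1}(Gal(X/S))$ tout entière. Le lemme \ref{lemdiff} livrerait alors la $\mathcal{D}_{M/S}$-invariance de $I$, ce qui achèverait la preuve.

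La seule précaution technique porte sur la description de l'idéal : il faut se placer sur l'ouvert dense de définition d'une intégrale première maximale $\pi:R(M/S)\dashrightarrow N$, où le produit fibré $R(M/S)\times_N R(M/S)$ a son idéal effectivement engendré par les différences $H\circ pr_1-H\circ pr_2$, puis passer à l'adhérence en invoquant le caractère fermé de la tangence, exactement comme dans la preuve du théorème \ref{\'egalit\'e}. En revanche, le c{\oe}ur de l'argument --- la stabilité des intégrales premières sous $\partial_i$ --- est immédiat via (\ref{relationderivtot}), si bien qu'il n'y a pas ici de véritable obstacle, mais seulement une vérification soignée du passage par le morphisme quotient $\Phi$.
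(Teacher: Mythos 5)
Votre d\'emonstration suit pour l'essentiel la m\^eme strat\'egie que celle de l'article : r\'eduction au produit $R(M/S)\times R(M/S)$ par le lemme \ref{lemdiff}, description de l'id\'eal par les int\'egrales premi\`eres, et surtout l'argument central selon lequel la relation (\ref{relationderivtot}) donne $R_{k+1}X(\partial_i H)=\partial_i(R_kX(H))=0$, de sorte que $\partial_i$ envoie les int\'egrales premi\`eres d'ordre $k$ sur des int\'egrales premi\`eres d'ordre $k+1$. Le seul point o\`u les deux r\'edactions diff\`erent est la gestion des d\'enominateurs : l'article travaille avec les g\'en\'erateurs r\'eguliers $P\otimes Q-Q\otimes P$, \'etablit une identit\'e alg\'ebrique faisant appara\^itre le facteur $Q\otimes Q$, puis l'\'elimine gr\^ace \`a la primalit\'e de $I_{k+1}$ (irr\'eductibilit\'e de $Gal_{k+1}(X)$) et au fait que $Q\otimes Q$ ne s'annule pas sur l'identit\'e, tandis que vous calculez directement sur les fonctions rationnelles $H\circ pr_1-H\circ pr_2$ et concluez par fermeture de la condition de tangence --- les deux voies m\`enent au but, mais c'est pr\'ecis\'ement l\`a que votre argument reste le plus elliptique.
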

\begin{proof}
Pour tout $k\in\mathbb{N}$, notons $I_k\subset \mathcal{O}_{R_k(M/S)\times R_k(M/S)}$ l'id\'eal associ\'e \`a la sous-vari\'et\'e de $R_k(M/S)\times R_k(M/S)$ correspondante au groupo\"{i}de de Galois d'ordre $k$: $Gal_k(X/S)$. Par le lemme (\ref{lemdiff}), il suffit de montrer que $(\partial_i\otimes 1+1\otimes \partial_i)I_k\subset I_{k+1}$. Pour $k\in\mathbb{N}$, d\'ecrivons $I_k$ en utilisant la d\'efinition (\ref{Rc}) du groupo\"{i}de de Galois d'ordre $k$. Si $U$ est un ouvert affine de $M$, alors $I_k(U\times U)=\{P\otimes Q-Q\otimes P\ \vert\ (P,Q)\in \mathcal{O}_{R_k(M/S)}(U)^2\mbox{ et }P/Q\in\mathbb{C}(R_k(M/S))^{R_kX}\}$. 
Soit $(P,Q)\in \mathcal{O}_{R_k(M/S)}(U)^2$ v\'erifiant $P/Q\in\mathbb{C}(R_k(M/S))^{R_kX}$. Un petit calcul donne l'\'egalit\'e 
\begin{multline}\tag{$\star$}
(Q\otimes Q)(\partial_i\otimes 1+1\otimes \partial_i)(P\otimes Q-Q\otimes P)+(P\otimes Q-Q\otimes P)(\partial_iQ\otimes Q+Q\otimes \partial_iQ))\\=(Q\partial_iP-P\partial_iQ)\otimes Q^2-Q^2\otimes (Q\partial_iP-P\partial_iQ)
\label{egal}
\end{multline}
Or le fait que $P/Q\in\mathbb{C}(R_k(M/S))^{R_kX}$ et les relation (\ref{relationderivtot}) que v\'erifient le champ de vecteurs $R_kX$ nous donnent $R_{k+1}X\partial_i(P/Q)=\partial_iR_kX(P/Q)=0$. Ceci implique que $\frac{Q\partial_iP-P\partial_iQ}{Q^2}=\partial_i(P/Q)\in \mathbb{C}(R_{k+1}(M))^{R_{k+1}X}$ et donc que $(Q\partial_iP-P\partial_iQ)\otimes Q^2-Q^2\otimes (Q\partial_iP-P\partial_iQ)\in I_{k+1}(U\times U)$. De plus $P\otimes Q-Q\otimes P\in I_k(U\times U)\subset I_{k+1}(U\times U)$ par le lemme (\ref{malproj}). Donc l'\'egalit\'e (\ref{egal}) nous donne $(Q\otimes Q)(\partial_i\otimes 1+1\otimes \partial_i)(P\otimes Q-Q\otimes P)\in I_{k+1}(U\times U)$. L'\'el\'ement $Q\otimes Q$ n'est pas inclus dans $I_{k+1}(U\times U) $ car sinon il devrait s'annuler sur l'identit\'e ce qui impliquerait que $Q=0$. Le groupo\"{i}de de Galois $Gal_{k+1}(X)$ est irr\'eductible donc l'id\'eal $I_{k+1}(U\times U)$ est premier. Ceci entraine que $(\partial_i\otimes 1+1\otimes \partial_i)(P\otimes Q-Q\otimes P)\in I_{k+1}(U\times U)$. Ce qui donne l'inclusion recherch\'ee et montre que le groupo\"ide $Gal(X/S)$ est une $\mathcal{D}_{M/S}$-pro-vari\'et\'e.
\end{proof}

\begin{proposition} \label{prop\'equivdef}
Nous supposons que la vari\'et\'e $S$ est r\'eduite \`a un point. Le groupo\"ide de Galois $Gal(X)$ est la sous-pro-vari\'et\'e minimal de $Aut(M)$ v\'erifiant les trois propri\'et\'es suivantes:
\begin{itemize}
\item $Gal(X)$ est une $\mathcal{D}_{M}$-sous-pro-vari\'et\'e de $Aut(M)$
\item Il existe un ouvert $U\subset M$ au-dessus duquel tous les groupo\"{i}des de Galois d'ordre $k$ sont des sous-groupo\"{i}des alg\'ebriques de $Aut_k(M)$. En suivant (\cite{Malgrange}), si $Gal(X)$ v\'erifie ces deux premi\`eres conditions, alors $Gal(X)$ est appel\'e un groupo\"{i}de de Lie.

\item Les groupo\"{i}des de Galois d'ordre $k$ sont tangents au champ $R_k^tX$, \textit{i.e.}  $R_k^tX\subset TGal_k(X)$
\end{itemize}
\end{proposition}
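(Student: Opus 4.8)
\emph{Plan.} Le plan est de vérifier que $Gal(X)$ satisfait les trois propriétés, puis d'établir sa minimalité ; chaque propriété résultera de l'une des trois définitions réunies au théorème d'équivalence \ref{\'egalit\'e}. La première propriété est exactement le lemme \ref{lemidealdiff}, qui affirme que l'idéal de $Gal(X)$ dans $\mathcal{O}_{Aut(M)}$ est $\mathcal{D}_M$-invariant. La troisième est immédiate à partir de la définition topologique \ref{Rb} : par construction, chaque $Gal_k(X)=\mathcal{G}_k^b(X)$ est la plus petite sous-variété tangente à $R_k^tX$ contenant l'identité, donc en particulier tangente à $R_k^tX$.

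Pour la deuxième propriété, la définition algébrique fournit, pour chaque ordre $k$, un ouvert $U_k\subset M$ au-dessus duquel $Gal_k(X)=\mathcal{G}_k^a(X)$ est un sous-groupoïde de $Aut_k(M)$. La difficulté principale sera d'en extraire un ouvert $U$ \emph{unique}, valable simultanément pour tous les $k$, car la construction du lemme \ref{lemRa} ne produit a priori qu'un ouvert dépendant de $k$. J'utiliserais ici la $\mathcal{D}_M$-invariance (lemme \ref{lemidealdiff}) pour propager la structure de groupoïde, l'idée étant que l'invariance de l'idéal signifie que les équations de $Gal_{k+1}(X)$ s'obtiennent en dérivant celles de $Gal_k(X)$, de sorte que la stabilité par $e$, $m$ et $inv$ à l'ordre $k+1$ se déduit de celle à l'ordre $k$ par la règle de Leibniz, sans rétrécir l'ouvert. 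On fixe ainsi un ouvert $U$ commun à tous les ordres, en suivant la théorie des $\mathcal{D}$-groupoïdes de \cite{Malgrange} ; c'est le point le plus technique de la démonstration.

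Reste la minimalité, qui découle directement de la définition topologique. Soit $\mathcal{H}=\varprojlim\mathcal{H}_k$ une sous-pro-variété vérifiant les trois propriétés, dont les troncatures $\mathcal{H}_k$ sont des sous-groupoïdes au-dessus d'un ouvert commun $U$ ; montrons $Gal_k(X)\subset\mathcal{H}_k$ pour tout $k$. Comme $\mathcal{H}_k$ est un sous-groupoïde au-dessus de $U$, il contient la section identité $e(U)$ ; étant fermée et $U$ étant dense dans la variété irréductible $M$, il contient $\overline{e(U)}=e(M)$. Par la troisième propriété, $\mathcal{H}_k$ est tangente à $R_k^tX$. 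La définition \ref{Rb} caractérisant $Gal_k(X)$ comme la plus petite sous-variété de $Aut_k(M)$ tangente à $R_k^tX$ et contenant $e(M)$, la minimalité entraîne $Gal_k(X)\subset\mathcal{H}_k$. En passant à la limite projective, il vient $Gal(X)\subset\mathcal{H}$, ce qui achèverait la preuve.
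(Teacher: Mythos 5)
Votre plan suit la m\^eme architecture que la preuve de l'article : les propri\'et\'es (1) et (3) sont imm\'ediates (lemme \ref{lemidealdiff} et d\'efinition topologique \ref{Rb}), toute la difficult\'e est concentr\'ee sur la propri\'et\'e (2), et votre argument de minimalit\'e --- chaque $\mathcal{H}_k$ contient $\overline{e(U)}=e(M)$ et est tangent \`a $R_k^tX$, donc contient $Gal_k(X)$ par minimalit\'e de ce dernier --- est correct et co\"incide en substance avec celui du texte.

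En revanche, votre traitement de la propri\'et\'e (2) comporte une lacune r\'eelle. Le m\'ecanisme que vous proposez, \`a savoir propager la structure de groupo\"ide de l'ordre $k$ \`a l'ordre $k+1$ par la r\`egle de Leibniz au motif que ``les \'equations de $Gal_{k+1}(X)$ s'obtiennent en d\'erivant celles de $Gal_k(X)$'', repose sur une hypoth\`ese fausse en g\'en\'eral. La $\mathcal{D}_M$-invariance de l'id\'eal $I$ de $Gal(X)$ garantit seulement que les d\'eriv\'ees des \'el\'ements de $I_k$ appartiennent \`a $I_{k+1}$ ; elle ne dit pas que $I_{k+1}$ est engendr\'e par $I_k$ et ses d\'eriv\'ees. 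De nouvelles \'equations peuvent appara\^itre \`a chaque ordre, et rien dans votre argument n'emp\^eche l'ouvert de se r\'etr\'ecir ind\'efiniment lorsque $k$ cro\^it. C'est pr\'ecis\'ement pour contourner cela que la preuve de l'article introduit, pour chaque $k$, la $\mathcal{D}_M$-sous-vari\'et\'e auxiliaire $Gal(X)(k)$ d\'efinie par l'id\'eal diff\'erentiel engendr\'e par $I\cap\mathcal{O}_{Aut_k(M)}$ : le th\'eor\`eme 4.4.1 de \cite{Malgrange} assure que chaque $Gal(X)(k)$ est un groupo\"ide de Lie (c'est l\`a, et non par une simple r\'ecurrence de Leibniz, qu'on passe d'un ordre groupo\"ide \`a tous les ordres au-dessus d'un m\^eme ouvert), puis le th\'eor\`eme 4.5.1 de \cite{Malgrange} --- un \'enonc\'e de type n{\oe}th\'erianit\'e pour les $\mathcal{D}$-groupo\"ides --- assure que l'intersection d\'ecroissante $\underset{k}{\bigcap}Gal(X)(k)$, dont on v\'erifie qu'elle co\"incide avec $Gal(X)$, est encore un groupo\"ide de Lie. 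Votre r\'ef\'erence g\'en\'erale \`a la th\'eorie des $\mathcal{D}$-groupo\"ides ne suffit pas : il faut identifier et invoquer ce r\'esultat de stabilisation, que l'argument de Leibniz ne remplace pas.
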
 

La d\'efinition pr\'esente dans \cite{Malgrange} de groupo\"{i}de de Lie  diff\`ere l\'eg\`erement de celle donn\'ee ci-dessus. Pour $n\in\mathbb{N}$, notons $\pi_n$ la projection $Aut(M)\rightarrow Aut_n(M)$. La d\'efinition de \cite{Malgrange} dit qu'une $\mathcal{D}_{M}$-sous-pro-vari\'et\'e $G$ de l'espace des automorphismes de $M$ est un groupo\"{i}de de Lie s'il existe $n\in\mathbb{N}$ tel qu'au-dessus d'un ouvert, $\pi_k(G)$ est un groupo\"{i}de pour $k\geq n$. Mais ces deux d\'efinitions co\"{i}ncident. Les op\'erations de composition partielle et d'inversion commutent aux projections $Aut_n(M)\rightarrow Aut_k(M)$ pour $k\leq n$. Si l'ensemble constructible $\pi_n(G)$ est stable par ces op\'erations au-dessus d'un ouvert alors pour $k\leq n$ l'ensemble constructible $\pi_k(G)$ est aussi stable par ces op\'erations au-dessus d'un ouvert \'eventuellement plus petit. Quitte \`a prendre un plus petit ouvert pour $\pi_n(G)$, nous n'aurons pas besoin de le r\'eduire pour $\pi_k(G)$.

\begin{proof}
Le groupo\"ide de Galois v\'erifie la premi\`ere et la troisi\`eme propri\'et\'e. Nous allons montrer qu'il v\'erifie la deuxi\`eme. Pour montrer que l'ouvert de cette deuxi\`eme condition peut \^{e}tre choisi ind\'ependamment de $k$ nous utilisons un th\'eor\`eme de B. Malgrange qui est donn\'e dans \cite{Malgrange} dans un cadre analytique mais qui reste valable dans un cadre alg\'ebrique. Nous montrons ensuite la minimalit\'e.
 
Pour montrer la deuxi\`eme propri\'et\'e nous allons construire un groupo\"{i}de de Lie puis voir qu'il co\"{i}ncide avec le groupo\"ide de Galois.
Notons $I\subset \mathcal{O}_{Aut(M)}$ l'id\'eal associ\'e \`a $Gal(X)$. Pour $k\in\mathbb{N}$, notons $\pi_k$ la projection $Aut(M)\rightarrow Aut_k(M)$. Pour $k\in\mathbb{N}$, notons $Gal(X)(k)$ la $\mathcal{D}_{M}$-sous-vari\'et\'e de $Aut(M)$ qui est d\'efinie par le faisceau d'id\'eaux diff\'erentiel engendr\'e par $I\cap\mathcal{O}_{Aut_k(M)}$. Montrons que $Gal(X)(k)$ est un groupo\"{i}de de Lie. D'apr\`es le lemme (\ref{lemidealdiff}), $Gal(X)$ est une $\mathcal{D}_{M}$-sous-pro-vari\'et\'e de $Aut(M)$. Donc $Gal(X)\subset Gal(X)(k)$ et $\overline{\pi_k(Gal(X)(k))}=\overline{\pi_k(Gal(X))}=Gal_k(X)$. La d\'efinition (\ref{Rb}) dite alg\'ebrique du groupo\"{i}de de Galois d'ordre $k$ de $X$ nous dit que $\overline{\pi_k(Gal(X)(k))}$ est un groupo\"{i}de au-dessus d'un ouvert.
 Le th\'eor\`eme \cite[4.4.1]{Malgrange} nous dit que $Gal(X)(k)$ est un groupo\"{i}de de Lie.
Le th\'eor\`eme \cite[4.5.1]{Malgrange} nous dit que l'intersection $\underset{k}{\bigcap}Gal(X)(k)$ est encore un groupo\"{i}de de Lie. Pour montrer que le groupo\"ide de Galois $Gal(X)$ est bien un groupo\"{i}de de Lie, il suffit de montrer qu'il co\"{i}ncide avec cette intersection. Or pour tout $k\in\mathbb{N}$, nous avons vu que $Gal(X)\subset Gal(X)(k)$ donc $Gal(X)$ est contenu dans l'intersection. Sachant que pour tout $k\in\mathbb{N}$, $\overline{\pi_k(Gal(X)(k))}=\overline{\pi_k(Gal(X))}$, nous obtenons l'inclusion inverse et terminons la preuve pour la deuxi\`eme propri\'et\'e. 

Il reste la minimalit\'e. Si $W$ est une pro-vari\'et\'e v\'erifiant les trois propri\'et\'es de la proposition alors la minimalit\'e de $W$ implique $W\subset Gal(X)$.  Or $W$ est la limite projective d'une famille de vari\'et\'e $(W_k)_k$. Si l'inclusion est stricte alors il existe un entier $k$ tel que l'inclusion $W_k\subset Gal_k(X)$ est stricte. Mais ceci contredit la minimalit\'e de $Gal_k(X)$. Il vient donc $W=Gal(X)$. Ceci termine la preuve de la proposition.
\end{proof}

\begin{corollary}
Lorsque l'espace des param\`etres $S$ est r\'eduit \`a un point, la d\'efinition (\ref{maldef}) du groupo\"ide de Galois de $X$ co\"{i}ncide avec celle donn\'ee par B. Malgrange dans \cite{Malgrange}.  
\end{corollary}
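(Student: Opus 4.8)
Le plan est de d\'eduire ce corollaire directement de la Proposition \ref{prop\'equivdef}, qui en contient l'essentiel. Je commencerais par rappeler la d\'efinition donn\'ee dans \cite{Malgrange}: le groupo\"ide de Galois du champ $X$ y est la plus petite $\mathcal{D}_M$-sous-pro-vari\'et\'e de $Aut(M)$ qui soit un groupo\"ide de Lie et dont la structure infinit\'esimale contienne le prolongement de $X$. Le point crucial est que cette d\'efinition se d\'ecompose en exactement trois exigences---l'invariance sous $\mathcal{D}_M$, la propri\'et\'e de groupo\"ide au-dessus d'un ouvert de Zariski de $M$, et la tangence au prolongement du flot de $X$---assorties d'une condition de minimalit\'e.

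Je mettrais ensuite ces exigences en correspondance, une \`a une, avec les trois propri\'et\'es de la Proposition \ref{prop\'equivdef}. La premi\`ere propri\'et\'e, affirmant que $Gal(X)$ est une $\mathcal{D}_M$-sous-pro-vari\'et\'e, n'est autre que l'invariance de Malgrange, d\'ej\`a \'etablie par le Lemme \ref{lemidealdiff}. La deuxi\`eme propri\'et\'e, l'existence d'un ouvert $U\subset M$ au-dessus duquel chaque $Gal_k(X)$ est un sous-groupo\"ide alg\'ebrique, est pr\'ecis\'ement la notion de groupo\"ide de Lie de \cite{Malgrange}; la remarque qui pr\'ec\`ede la preuve de la Proposition \ref{prop\'equivdef} concilie la formulation en apparence plus faible de \cite{Malgrange} (condition de groupo\"ide seulement pour $k\geq n$) avec celle employ\'ee ici. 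La troisi\`eme propri\'et\'e, $R_k^tX\subset TGal_k(X)$, traduit l'exigence que le groupo\"ide soit tangent au flot de $X$.

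Le seul point demandant de la prudence est l'identification de la condition de tangence: il faut v\'erifier que la condition sous laquelle Malgrange demande que le groupo\"ide de Galois contienne $X$ co\"incide bien avec la tangence g\'eom\'etrique $R_k^tX\subset TGal_k(X)$. C'est ici qu'intervient l'\'equivalence des trois d\'efinitions \'etablie au Th\'eor\`eme \ref{\'egalit\'e}: la d\'efinition dite topologique $\mathcal{G}_k^b(X)=V(R_k^tX,e(M))$ rend cette tangence manifeste, et la Proposition \ref{prop\'equivdef} la transporte au niveau de la pro-vari\'et\'e. Une fois cet appariement effectu\'e, l'objet minimal v\'erifiant les trois propri\'et\'es---identifi\'e \`a $Gal(X)$ par la proposition---est par d\'efinition le groupo\"ide de Galois de Malgrange, et les deux d\'efinitions co\"incident. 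Je n'attends pas d'obstacle v\'eritable ici: le corollaire est pour l'essentiel une reformulation de la proposition dans le vocabulaire de \cite{Malgrange}.
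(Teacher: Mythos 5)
Votre preuve est correcte et suit essentiellement la m\^eme voie que celle de l'article : celui-ci se contente d'observer que le groupo\"ide de Galois de Malgrange est, par d\'efinition, la sous-pro-vari\'et\'e minimale de $Aut(M)$ v\'erifiant les trois propri\'et\'es de la proposition (\ref{prop\'equivdef}), laquelle identifie pr\'ecis\'ement cet objet \`a $Gal(X)$. Votre appariement point par point des trois conditions (invariance sous $\mathcal{D}_M$, propri\'et\'e de groupo\"ide de Lie au-dessus d'un ouvert, tangence au prolongement de $X$) et votre renvoi \`a la remarque conciliant les deux formulations de la notion de groupo\"ide de Lie ne font qu'expliciter ce que l'article laisse implicite.
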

\begin{proof}
Le groupo\"ide de Galois d\'efini par B. Malgrange est la sous-pro-vari\'et\'e minimal de $Aut(M)$ v\'erifiant les trois propri\'et\'es d\'ecrites dans la proposition (\ref{prop\'equivdef}).  
\end{proof}

Pour conclure cette partie nous allons d\'ecrire une partie des \'equations du groupo\"ide de Galois d'un syst\`eme lin\'eaire. Suivant les cas particuliers effectivement consid\'er\'es d'autres \'equations pourront apparaitre.
\begin{example}
Posons $S =\mathbb C$ et $M = S\times \mathbb C^{n+1}$. Nous noterons $q$ la coordonn\'ee sur $S$ et $t,x_1,\ldots x_n$ les coordonn\'ees sur $\mathbb C^{n+1}$. Consid\'erons un champ de vecteurs de la forme 
\[
X = \frac{\partial}{\partial t} + \sum A_i^j(t,q)x_j\frac{\partial}{\partial x_i}
\]
o\`u  $A\in M_{n\times n}(\mathbb{C}(t,q))$. Ce champ de vecteurs repr\'esente le syst\`eme lin\'eaire \`a param\`etres : $\frac{dX}{dt} = A(t,q)X$.

L'espace des rep\`eres d'ordre 1 est $R_1(M/S) =  M \times GL(\mathbb C^{n+1})$ de coordonn\'ees $q$, $t$, $x_1, \ldots x_n$, $t^{1_i}$et $x_j^{1_i}$, $j=1,\ldots, n$ et $i=0, \ldots, n$. Le prolongement du champ de vecteurs est
\[
R_1X = X + \sum \left(\frac{\partial A_i^j}{\partial t} t^{1_k}x_j + A_i^j x_j^{1_k}\right)\frac{\partial}{\partial x_i^{1_k}}
\]

\begin{enumerate}
\item Les fonctions $t^{1_i}$ sont des int\'egrales premi\`eres de $R_1X$. Elles sont pr\'eserv\'ees par $Gal_1(X/S)$. Ceci signifie que les \'el\'ements de $Gal(X/S)$ sont de la forme :
\[
(M_q,p_1) \to (M_q,p_2) : (t,x) \mapsto (t+c, \varphi_q(t,x))\qquad\mbox{avec }c\in\mathbb{C}  
\]  
\item Pla\c{c}ons nous sur $V \subset R_1(M/S)$ d\'ecrite par $t^{1_i} = \delta_{0i}$ pour $i=0, \ldots, n$. Notons $x'$ la matrice $(x_j^{1_i})_{\substack{j=1,\ldots, n \\ i=1, \ldots, n}}$. Les coefficients de $(x')^{-1}x$ sont des int\'egrales premi\`eres de $R_1X$. Le groupo\"ide les pr\'eservant est l'ensemble des transformations de la forme :
\[
(M_q,p_1) \to (M_q,p_2) : (t,x) \mapsto (t+c, \alpha_q(t)x)   
\]  
avec 
$\frac{d}{dt} \alpha_q = A(t,q)\alpha_q - \alpha_q A (t+c,q)$  

\end{enumerate}

Les r\'eductions suppl\'ementaires qui peuvent \'eventuellement \^etre faites d\'ependront de la matrice $A$. Elles seront donn\'ees par des \'equations polynomiales en les coefficients de $\alpha$,\textit{i.e.} dans $\mathbb C [q,t][\alpha_i^j]$.
  
Notez que le sous-groupo\"ide de $Gal(X/S)$ des \'elements fixant $t$ est le groupe de Galois intrins\`eque sur $\mathbb C(q,t)$ avec sa structure diff\'erentielle par rapport \`a $\frac{d}{dt}$.
\end{example}

\begin{example}
Posons $S =\{\ast\}$ et $M = \mathbb{C}\times \mathbb C^{n+1}$. Nous noterons $q$ la coordonn\'ee sur $\mathbb{C}$ et $t,x_1,\ldots x_n$ les coordonn\'ees sur $\mathbb C^{n+1}$. Consid\'erons un champ de vecteurs de la forme 
\[
X = \frac{\partial}{\partial t} + \sum A_i^j(t,q)x_j\frac{\partial}{\partial x_i}
\]
o\`u  $A\in M_{n\times n}(\mathbb{C}(t,q))$. Ce champ de vecteur repr\'esente le syst\`eme  non lin\'eaire: 
\[
\left\{ \begin{array}{l} \frac{dX}{dt} = A(t,q)X \\ \frac{dq}{dt} = 0\end{array}\right.
\]

L'espace des rep\`eres d'ordre 1 est $R_1(M) =  M \times GL(\mathbb C^{n+2})$ de coordonn\'ees $q$, $t$, $x_1, \ldots x_n$, $q^{1_i}$, $t^{1_i}$et $x_j^{1_i}$, $j=1,\ldots, n$ et $i=-1, \ldots, n$. Le prolongement du champ de vecteurs est
\[
R_1X = X + \sum \left(\left(\frac{\partial A_i^j}{\partial t} t^{1_k}+\frac{\partial A_i^j}{\partial q} q^{1_k}\right)x_j + A_i^j x_j^{1_k}\right)\frac{\partial}{\partial x_i^{1_k}}
\] 

\begin{enumerate}
\item Les fonctions $q$, $q^{1_i}$ et $t^{1_i}$ sont des int\'egrales premi\`eres de $R_1X$. Elles sont pr\'eserv\'ees par $Gal_1(X)$. Ceci signifie que les \'el\'ements de $Gal(X)$ sont de la forme :
\[
(M,p_1) \to (M,p_2) : (q,t,x) \mapsto (q,t+c, \varphi(q,t,x))  
\]  
\item Pla\c{c}ons nous sur $V \subset R_1(M)$ d\'ecrite par $q^{1_{-1}} = 1$, $q^{1_i}=0$ $t^{1_i} = \delta_{0i}$ pour $i=-1, \ldots, n$. Notons $x'$ la matrice $(x_j^{1_i})_{j=1,\ldots, n ; i=1, \ldots, n}$. Les coordonn\'ees de $(x')^{-1}x$ sont des int\'egrales premi\`eres de $R_1X$. Le groupo\"ide les pr\'eservant est l'ensemble des transformation de la forme :
\[
(M,p_1) \to (M,p_2) : (q,t,x) \mapsto (q,t+c, \alpha(q,t)x)   
\]  
avec 
$\frac{\partial}{\partial t} \alpha = A(t,q)\alpha - \alpha A (t+c,q)$  

\end{enumerate}

Les r\'eductions suppl\'ementaires qui peuvent \'eventuellement \^etre faites d\'ependront de la matrice $A$. Elles seront donn\'ees par des \'equations diff\'erentielles en les coefficients de $\alpha$,\textit{i.e.} dans $\mathbb C [q,t][ \frac{\partial^k \alpha_i^j}{\partial q^k}]$.
  
Notez que le sous-groupo\"ide de $Gal(X)$ des \'elements fixant $t$ est une version intrins\`eque du groupe de Galois \`a param\`etres sur $\mathbb C(q)$ avec sa structure diff\'erentielle par rapport \`a $\frac{\partial}{\partial t}$ et $\frac{\partial}{\partial q}$.
\end{example}
\section{Th\'eor\`emes de projection et de sp\'ecialisation}
Ces th\'eor\`emes se proposent de comparer les "tailles" de diff\'erents groupo\"ide de Galois. Avant de les \'enoncer nous devons pr\'eciser la mani\`ere de mesurer la "taille" d'objets de dimension infinie.
\subsection{Type diff\'erentiel du groupo\"ide de Galois}\label{secpoldiff}
Pour mesurer la taille du groupo\"ide de Galois, nous allons regarder la suite des dimensions donn\'ee par la filtration par les jets d'ordre fini. Plus g\'en\'eralement, E.R.Kolchin a donn\'e dans \cite{Kolchin2} un analogue du polyn\^{o}me de Hilbert dans le cadre de l'alg\`ebre diff\'erentielle: 

\begin{theorem}\label{Kolchin}
Soient $(K,D_1,\ldots,D_m)$ un corps diff\'erentiel dans lequel les d\'erivations $D_i$ commutent entre elles, des \'el\'ements $\eta_1,\ldots, \eta_n$ d'une extension diff\'erentielle de $(K,D_1,\ldots,D_m)$, $L=K\langle \eta_1,\ldots \eta_n\rangle$ le corps diff\'erentiel engendr\'e par ces \'el\'ements et pour $k\in\mathbb{N}$, $L^{\leq k}$ le sous corps de $L$ engendr\'e par $K$ et les d\'eriv\'ees au plus $k$-i\`eme de ces \'el\'ements. Il existe un polyn\^{o}me $P\in\mathbb{Q}[x]$ de degr\'e $\ell$ appel\'e polyn\^{o}me de dimension diff\'erentielle tel que:
\begin{itemize}
\item il existe $k_0\in\mathbb{N}$ tel que pour tout $k\geq k_0$, $P(k)=\trdeg_K(L^{\leq k})$
\item $\ell\leq m$ et le polyn\^{o}me $P$ peut s'\'ecrire sous la forme $P(k)=\underset{0\leq i\leq \ell}{\sum}a_i\binom{k+i}{i}$ o\`{u} $a_i\in\mathbb{Z}$
\item le degr\'e $\ell$ et le coefficient $a_\ell$ ne d\'ependent pas des \'el\'ements $\eta_1,\ldots, \eta_n$ mais seulement de $L$
\item l'entier $\ell$ est le type diff\'erentiel de $L$ sur $K$. 
\item le coefficient $a_\ell$ est le degr\'e de transcendance diff\'erentielle de $L$ sur $K$. 
\end{itemize}
\end{theorem}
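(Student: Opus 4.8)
The plan is to reduce the statement to a purely combinatorial fact about subsets of $\mathbb{N}^m$ by passing to the differential polynomial algebra. First I would introduce the ring of differential polynomials $K\{y_1,\dots,y_n\}$ on $n$ differential indeterminates (with the commuting derivations $D_1,\dots,D_m$) and let $\mathfrak{p}$ be the prime differential ideal of relations satisfied by $\eta_1,\dots,\eta_n$, so that $L\cong\mathrm{Frac}(K\{y\}/\mathfrak{p})$. Fixing an orderly ranking on the derivatives $D^\theta y_i$ ($\theta\in\mathbb{N}^m$, $1\le i\le n$), the Ritt--Kolchin theory provides a characteristic set of $\mathfrak{p}$ whose leaders generate, under the action of the monoid $\{D^\theta\}$, a set $\mathcal{E}$ of \emph{derived} derivatives. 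The complement $\mathcal{P}$, the \emph{parametric} derivatives, is then a transcendence basis: reduction modulo the characteristic set shows that $L^{\le k}$ is algebraic over the subfield generated by $K$ and the parametric derivatives of order $\le k$, so that for $k$ large
\[
\trdeg_K(L^{\le k})=\#\{(\theta,i)\in\mathcal{P}\ \vert\ |\theta|\le k\}.
\]

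The combinatorial heart is to show that the right-hand side is eventually a numerical polynomial. Treating each of the $n$ indeterminates separately, one is led to the classical lemma: for a monoid ideal of $\mathbb{N}^m$ (a subset stable under addition of elements of $\mathbb{N}^m$), the number of elements of its complement of order $\le k$ is, for $k\gg 0$, given by a polynomial in $k$. I would prove this by induction on $m$ using the finite-difference operator $\Delta f(k)=f(k+1)-f(k)$: slicing $\mathbb{N}^m$ along the last coordinate reduces the computation of $\Delta$ to an $(m-1)$-dimensional problem, and a function whose difference is eventually polynomial is itself eventually polynomial. The same finite-difference calculus yields the representation $P(k)=\sum_{0\le i\le \ell}a_i\binom{k+i}{i}$ with $a_i\in\mathbb{Z}$, since the $\binom{k+i}{i}$ form a $\mathbb{Z}$-basis of the integer-valued polynomials and each $a_i$ is obtained by iterated differences. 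The bound $\ell\le m$ is immediate: the total number of derivatives of order $\le k$ is $n\binom{k+m}{m}$, a polynomial of degree $m$ that dominates $\trdeg_K(L^{\le k})$.

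It remains to establish that $\ell$ and $a_\ell$ are intrinsic to $L$. For this I would use a squeezing argument: if $L=K\langle\eta\rangle=K\langle\zeta\rangle$ are two generating systems, each $\zeta_j$ lies in some $L^{\le c}$ and each $\eta_i$ in some $L^{\le c'}$, whence $L_\zeta^{\le k}\subseteq L_\eta^{\le k+c}$ and $L_\eta^{\le k}\subseteq L_\zeta^{\le k+c'}$ for all $k$. Applying $\trdeg_K$ gives $P_\zeta(k)\le P_\eta(k+c)$ and $P_\eta(k)\le P_\zeta(k+c')$ for large $k$; two numerical polynomials trapped in this way necessarily share the same degree and the same leading coefficient, because shifting the argument by a constant leaves the degree and the leading term $a_\ell k^\ell/\ell!$ unchanged. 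Since $\binom{k+\ell}{\ell}$ has leading term $k^\ell/\ell!$, the equality of leading terms forces equality of $a_\ell$. This proves that $\ell$ and $a_\ell$ depend only on $L$; one then defines them to be the differential type and the differential transcendence degree of $L$ over $K$.

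The main obstacle is the combinatorial lemma together with the reduction identifying $\trdeg_K(L^{\le k})$ with the count of parametric derivatives: making the latter precise requires the full Ritt--Kolchin reduction theory (autoreduced sets, leaders, and the reduction of an arbitrary derivative to a polynomial in parametric derivatives) and care that the equality of transcendence degree with the parametric count holds for all sufficiently large $k$, not merely asymptotically. The polynomiality lemma itself, though elementary, is where the degree bound and the integrality of the coefficients in the binomial basis are genuinely produced.
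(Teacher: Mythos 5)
The paper does not prove this theorem: it is imported verbatim from Kolchin \cite{Kolchin2} and used as a black box to define the differential type, so there is no internal proof to compare against. Your reconstruction is, in outline, exactly Kolchin's original argument: characteristic set of the defining prime $\mathfrak{p}$ for an orderly ranking, identification of $\trdeg_K(L^{\leq k})$ with the number of parametric derivatives of order at most $k$, the numerical-polynomial lemma for the complement of a monoid ideal of $\mathbb{N}^m$ (proved by induction on $m$ with finite differences, which also yields the integral coefficients in the basis $\binom{k+i}{i}$ and the bound $\ell\leq m$), and the squeeze $P_\zeta(k)\leq P_\eta(k+c)$, $P_\eta(k)\leq P_\zeta(k+c')$ giving the invariance of $\ell$ and $a_\ell$. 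The sketch is correct; the two points that genuinely carry the weight and that you rightly flag are (i) that the parametric derivatives are algebraically independent because a nontrivial relation among them would be reduced with respect to the characteristic set yet lie in $\mathfrak{p}$, and (ii) that reducing a principal derivative of order at most $k$ only involves derivatives of order at most $k$ precisely because the ranking is orderly, the inverted separants and initials being nonzero in $L$ since $\mathfrak{p}$ is prime.
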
 

\begin{example}\label{extypediff}
Soit $U\subset\mathbb{C}^2$ un ouvert affine muni de deux syst\`emes de coordonn\'ees $(x_1,x_2)$ et $(y_1,y_2)$. Consid\'erons l'extension diff\'erentielle
\[
\left(\mathbb{C},\frac{\partial}{\partial u},\frac{\partial}{\partial v}\right)\subset \left(\mathbb{C}(Aut(U)),\frac{\partial}{\partial x_1},\frac{\partial}{\partial x_2}\right)=\left(\mathbb{C}(x_1,x_2,y_1^\alpha,y_2^\alpha,\alpha\in\mathbb{N}^2),\frac{\partial}{\partial x_1},\frac{\partial}{\partial x_2}\right)
\]
o\`{u} $\frac{\partial}{\partial x_i}x_j=\delta_{ij},\ \frac{\partial}{\partial x_i}y_j^\alpha=y_j^{\alpha+1_i}$.
Le degr\'e de transcendance de $\mathbb{C}(Aut_k(U))$ sur $\mathbb{C}$ est deux fois le cardinal de $\{\alpha\in\mathbb{N}^2\ \vert\ \vert\alpha\vert\leq k\}$ plus deux:
\[
2(1+2+\ldots+(k+1))+2=2\binom{k+2}{2}+2
\]
\end{example}

\begin{corollary}\label{corpoldim}
Il existe un polyn\^{o}me $P\in\mathbb{Q}[x]$ de degr\'e $\ell$ tel que:
\begin{itemize}
\item il existe $k_0\in\mathbb{N}$ tel que pour tout $k\geq k_0$, $P(k)=dim (Gal_k(X/S))$
\item $\ell\leq \dim_S(M)$ et le polyn\^{o}me $P$ peut s'\'ecrire sous la forme $P(k)=\underset{0\leq i\leq l}{\sum}a_i\binom{k+i}{i}$ o\`{u} $a_i\in\mathbb{Z}$
\item le coefficient $a_\ell$ est le degr\'e de transcendance diff\'erentiel de $\mathbb{C}(Gal(X/S))$ sur $\mathbb{C}$
\end{itemize}

\end{corollary}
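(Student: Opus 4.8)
The plan is to recognise the statement as a direct application of Kolchin's differential dimension polynomial (Theorem \ref{Kolchin}) to the function field of $Gal(X/S)$, equipped with the $m=\dim_S(M)$ commuting derivations $\frac{\partial}{\partial x_1},\ldots,\frac{\partial}{\partial x_m}$ acting on $\mathcal{O}_{Aut(M/S)}$ as described in \ref{structalg}. First I would use Lemma \ref{lemidealdiff}: the ideal of $Gal(X/S)$ is $\mathcal{D}_{M/S}$-invariant, so the function field $L:=\mathbb{C}(Gal(X/S))$ inherits the structure of a differential field over the field of constants $K:=\mathbb{C}$ (on which the derivations vanish, exactly as in Example \ref{extypediff}).

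Next I would check that $L$ is finitely differentially generated over $K$. In the local coordinates of \ref{structalg}, the field $\mathbb{C}(Aut(M/S))$ is differentially generated over $\mathbb{C}$ by the finitely many functions $x_i$, $z_j$, $t_j$ and the order-zero jets $y_i^0$, since $\frac{\partial^\alpha}{\partial x^\alpha}(y_i^0)=y_i^\alpha$ and $\frac{\partial}{\partial x_i}$ kills the $z_j,t_j$ and sends $x_j$ to a constant. Passing to the differential quotient, the images of these functions differentially generate $L=\mathbb{C}\langle\eta_1,\ldots,\eta_n\rangle$.

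The crucial step is the identification $L^{\leq k}=\mathbb{C}(Gal_k(X/S))$. By Lemma \ref{malproj} the projections $Gal(X/S)\to Gal_k(X/S)$ are dominant, so $\mathbb{C}(Gal_k(X/S))$ embeds in $L$ as the subfield generated over $\mathbb{C}$ by $x_i,z_j,t_j$ together with the $y_i^\alpha$ for $|\alpha|\leq k$. By the coincidence of the filtration by order of differential operator with the filtration by the weight $|\alpha|$ of the jet variables (Lemma \ref{lemreperelocglob}, and its analogue Lemma \ref{lemautloc}), these $y_i^\alpha$ are precisely the derivatives of order at most $k$ of the generators, which is exactly Kolchin's $L^{\leq k}$. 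Since $Gal_k(X/S)$ is irreducible by Proposition \ref{dimgroup}, one then has $\dim(Gal_k(X/S))=\trdeg_{\mathbb{C}}\bigl(\mathbb{C}(Gal_k(X/S))\bigr)=\trdeg_K(L^{\leq k})$.

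Finally I would invoke Theorem \ref{Kolchin} for $K=\mathbb{C}\subset L=\mathbb{C}(Gal(X/S))$: it yields a polynomial $P\in\mathbb{Q}[x]$ with $P(k)=\trdeg_K(L^{\leq k})=\dim(Gal_k(X/S))$ for all $k\geq k_0$, of degree $\ell\leq m=\dim_S(M)$, expressible as $\sum_{0\leq i\leq\ell}a_i\binom{k+i}{i}$ with $a_i\in\mathbb{Z}$, and whose leading coefficient $a_\ell$ is the differential transcendence degree of $L$ over $\mathbb{C}$, that is, of $\mathbb{C}(Gal(X/S))$ over $\mathbb{C}$. I expect the main obstacle to be the identification $L^{\leq k}=\mathbb{C}(Gal_k(X/S))$: one must confirm both that the projection loses no part of the function field (dominance, via Lemma \ref{malproj}) and that Kolchin's ``derivatives of order at most $k$'' matches the weight filtration of the jet coordinates (via Lemma \ref{lemreperelocglob}); every other ingredient is then formal.
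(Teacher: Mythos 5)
Votre preuve est correcte et suit essentiellement la même démarche que celle de l'article : toutes deux appliquent le théorème de Kolchin (théorème \ref{Kolchin}) au corps $\mathbb{C}(Gal(X/S))$ vu comme extension différentielle de $\mathbb{C}$ pour les dérivations $\frac{\partial}{\partial x_i}$ de la sous-section \ref{structalg}, en utilisant le lemme \ref{lemidealdiff} pour l'invariance de l'idéal sous $\mathcal{D}_{M/S}$ et la proposition \ref{dimgroup} pour sa primalité, les générateurs différentiels étant les $2(m+d)$ coordonnées de la carte du lemme \ref{lemautloc}. Vous êtes même un peu plus explicite que l'article sur le point clef $L^{\leq k}=\mathbb{C}(Gal_k(X/S))$ (dominance des projections via le lemme \ref{malproj} et coïncidence de la filtration par l'ordre des dérivées avec celle par le poids $|\alpha|$ des variables de jets), que l'article laisse implicite.
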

\begin{proof}
Soit $U\subset M\times M$ le rev\^{e}tement non-ramifi\'e d'un ouvert de $\mathbb{C}^{2(m+d)}$ pr\'esent\'e dans le lemme (\ref{lemautloc}). Ses coordonn\'ees sont not\'ees $x_1,\ldots,x_m,t_1,\ldots,t_d,y_1,\ldots,y_m,z_1,\ldots,z_d$. D'apr\`es ce lemme, $\mathbb{C}(Aut(M/S))=\mathbb{C}(M\times M)(y_i^{\alpha}\mbox{, } 1\leq i\leq m\mbox{, }\alpha\in \mathbb{N}^m\mbox{, }1\leq\vert\alpha\vert)$. 
Les d\'erivations $D_1:=\frac{\partial}{\partial x_1},\ldots,D_m:=\frac{\partial}{\partial x_m}$ engendrent $\mathcal{D}_{M/S}$. Elles v\'erifient $D_ix_j=\delta_{ij},\ D_it_j=0,\ D_iy_j^\alpha=y_j^{\alpha+1_i},\ D_iz_j=0$.
Ainsi, $(\mathbb{C}(Aut(M/S)),D_1,\ldots,D_m)$ est une extension diff\'erentielle de $(\mathbb{C},D_1,\ldots,D_m)$ qui est diff\'erentiellement engendr\'ee par les $2(m+d)$ d'\'el\'ements: $x_1,\ldots,x_m,t_1,\ldots,t_d,y_1,\ldots,y_m,z_1,\ldots,z_d$.

L'id\'eal $I$ associ\'e au groupo\"ide de Galois est premier par la proposition (\ref{dimgroup}) et $\mathcal{D}_{M/S}$-invariant par le lemme (\ref{lemidealdiff}). Les d\'erivations $D_i$ passent au quotient $\mathcal{O}_{Aut(M/S)}(U)/I(U)$. Le corps des fractions du quotient $\mathbb{C}(Gal(X/S))$ est diff\'erentiellement engendr\'e par la classe des $2(m+d)$ d'\'el\'ements $x_1,\ldots,x_m,t_1,\ldots,t_d,y_1,\ldots,y_m,z_1,\ldots,z_d$ modulo $I$. Le th\'eor\`eme (\ref{Kolchin}) nous donne alors l'existence du polyn\^{o}me appel\'e polyn\^{o}me de dimension diff\'erentielle. 
\end{proof}
\begin{definition}
Le type diff\'erentiel du groupo\"ide de Galois du champ $X$ sur $S$ est le degr\'e de son polyn\^{o}me de dimension diff\'erentielle. Il est not\'e $\tdiff(Gal(X/S))$.
\end{definition}
\subsection{Le th\'eor\`eme de projection}\label{secprojection}
Soient $N$ une vari\'et\'e irr\'eductible lisse de dimension $n\leq m$, $\pi :M\dashrightarrow N$ une application rationnelle dominante et $\tilde{\rho}:N\rightarrow S$ un morphisme lisse \`a fibres connexes tels que le diagramme commute
\[
\xymatrix{
    M\ar@{-->>}[r]^{\pi} \ar@{->}[dr]_{\rho}  &N \ar@{->}[d]^{\tilde{\rho}}\\
   &S
   }
\]

\begin{definition}
La sous-vari\'et\'e des $S$-automorphismes d'ordre $k$ de $M$ adapt\'es au feuilletage induit par $\pi$ est not\'ee $Aut_k(\mathcal{F}_{\pi})$, \textit{i.e.}  
\[
Aut_k(\mathcal{F}_{\pi}) :=\overline{\left\{j_k(\phi)\in R(M/S)\ \vert \ j_k(\phi)^*(\pi^*\Omega_N)\subset \pi^*\Omega_N)\right\}}
\]
\end{definition}
Soient $U\subset M\times M$, $\tilde{U}\subset N\times N$ deux voisinages ouverts qui sont respectivement des rev\^{e}tements non ramifi\'es d'ouverts de $\mathbb{C}^{2(m+d)}$, $\mathbb{C}^{2(n+d)}$ et qui font commuter le diagramme:
\[
\xymatrix{
	U\ar[r] \ar[d]_{pr_1/pr_2} & \mathbb{C}^{m+d}\times \mathbb{C}^{m+d}\ar[d]^{pr_1/pr_2} \\
    M\ar[r] \ar[d]_\pi & \mathbb{C}^{m+d}\ar[d] \\
    \tilde{U}\ar[r] \ar[d]_{\tilde{\rho}} & \mathbb{C}^{n+d}\ar[d] \\
    S\ar[r]&\mathbb{C}^d}
\]
Les fl\`eches verticales sans noms du diagramme sont les projections sur les derni\`eres coordonn\'ees. Il vient un syst\`eme de coordonn\'ees $p_1=(p_1',\tilde{p_1},q_1)\in\mathbb{C}^{m-n}\times\mathbb{C}^n\times\mathbb{C}^d,\ p_2=(p_2',\tilde{p_2},q_2)\in\mathbb{C}^{m-n}\times\mathbb{C}^n\times\mathbb{C}^d$ dans lequel un \'el\'ement de $Aut_k(\mathcal{F}_\pi)$ s'\'ecrit:
\begin{multline*}
j_k(\phi)((p_1',\tilde{p_1})+(\e_1,\ldots,\e_m),q_1)=\left(\underset{\substack{\alpha\in \mathbb{N}^m \\ \vert\alpha\vert\leq k}}{\sum}\phi_1^{\alpha}\frac{\epsilon^{\alpha}}{\alpha !},\ldots,\underset{\substack{\alpha\in \mathbb{N}^m \\ \vert\alpha\vert\leq k}}{\sum}\phi_{m-n}^{\alpha}\frac{\epsilon^{\alpha}}{\alpha !},\right.\\ 
\left.\underset{\substack{\alpha\in \{0\}^{m-n}\times\mathbb{N}^n \\ \vert\alpha\vert\leq k}}{\sum}\phi_{m-n+1}^{\alpha}\frac{\epsilon^{\alpha}}{\alpha !},\ldots,\underset{\substack{\alpha\in \{0\}^{m-n}\times\mathbb{N}^n \\ \vert\alpha\vert\leq k}}{\sum}\phi_{m}^{\alpha}\frac{\epsilon^{\alpha}}{\alpha !},q_2\right)
\end{multline*}
o\`{u} pour tout $i=1,\ldots,m,\ \alpha\in\mathbb{N},\ \phi_i^\alpha\in\mathbb{C}$.
Il vient une application $\pi_*:Aut_k(\mathcal{F}_\pi)\rightarrow Aut_k(N/S)$ telle que 
\[
\pi_*(j_k(\phi))(\tilde{p_1}+(\e_{m-n+1},\ldots,\e_m),q_1)=\left(\underset{\substack{\alpha\in \{0\}^{m-n}\times\mathbb{N}^n \\ \vert\alpha\vert\leq k}}{\sum}\phi_{m-n+1}^{\alpha}\frac{\epsilon^{\alpha}}{\alpha !},\ldots,\underset{\substack{\alpha\in \{0\}^{m-n}\times\mathbb{N}^n \\ \vert\alpha\vert\leq k}}{\sum}\phi_{m}^{\alpha}\frac{\epsilon^{\alpha}}{\alpha !},q_2\right)
\]
\begin{theorem}\label{thmproj}
Soit $X$ et $Y$ des champs de vecteurs rationnels sur $M$ et $N$ v\'erifiant $d\pi.X=Y$. Pour tout $k\in\mathbb{N}$, 
\[
\overline{\pi_*Gal_k(X/S))}=Gal_k(Y/S)
\]
\end{theorem}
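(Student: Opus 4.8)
The plan is to use the topological characterization $Gal_k(X/S)=V(R_k^tX,e(M))$ as the minimal subvariety tangent to the target-prolongation and containing the identity section (Definition \ref{Rb}), together with the fact that the hypothesis $d\pi.X=Y$ forces $\pi$ to conjugate the flows of $X$ and $Y$. On their common domains one has $\pi\circ\exp(\tau X)=\exp(\tau Y)\circ\pi$; in particular $\exp(\tau X)$ sends fibres of $\pi$ to fibres of $\pi$, so it is adapted to $\mathcal{F}_\pi$. Denoting by $\Psi^X_\tau$ the flow of $R_k^tX$ on $Aut_k(M/S)$, which is post-composition $\phi\mapsto\exp(\tau X)\circ\phi$, and by $\Psi^Y_\tau$ the analogous flow for $Y$, the identity $\pi\circ(\exp(\tau X)\circ\phi)=\exp(\tau Y)\circ(\pi_*\phi)\circ\pi$ valid for $\phi\in Aut_k(\mathcal{F}_\pi)$ gives the intertwining
\[
\pi_*\circ\Psi^X_\tau=\Psi^Y_\tau\circ\pi_* .
\]
Differentiating at $\tau=0$ shows that $R_k^tX$ and $R_k^tY$ are $\pi_*$-related. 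Since $Aut_k(\mathcal{F}_\pi)$ is closed, contains $e(M)$, and is invariant under $\Psi^X_\tau$ (post-composition by the adapted map $\exp(\tau X)$ preserves adaptedness), it is tangent to $R_k^tX$; by minimality of $Gal_k(X/S)$ we get $Gal_k(X/S)\subset Aut_k(\mathcal{F}_\pi)$, so $\pi_*$ is defined on the whole Galois groupoid.

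For $Gal_k(Y/S)\subset\overline{\pi_*Gal_k(X/S)}$ I would invoke the minimality of $Gal_k(Y/S)$. The closed set $\overline{\pi_*Gal_k(X/S)}$ contains $\pi_*(e(M))=e(N)$; and since $R_k^tX$ is tangent to $Gal_k(X/S)$ and $d\pi_*$ carries it to $R_k^tY$, the field $R_k^tY$ is tangent to $\pi_*Gal_k(X/S)$ on a dense open set, hence to its closure, tangency being a closed condition. Thus $\overline{\pi_*Gal_k(X/S)}$ is a subvariety tangent to $R_k^tY$ and containing $e(N)$, so it contains $V(R_k^tY,e(N))=Gal_k(Y/S)$.

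For the reverse inclusion I would use the minimality of $Gal_k(X/S)$, applied to the closed subvariety $A:=\pi_*^{-1}(Gal_k(Y/S))\cap Aut_k(\mathcal{F}_\pi)$ of $Aut_k(M/S)$. It contains $e(M)$, as $\pi_*(e(M))=e(N)\in Gal_k(Y/S)$. It is tangent to $R_k^tX$: for $\phi\in A$ the intertwining gives $\pi_*(\Psi^X_\tau\phi)=\Psi^Y_\tau(\pi_*\phi)$, which remains in $Gal_k(Y/S)$ since that groupoid is tangent to $R_k^tY$, hence $\Psi^Y_\tau$-invariant, while $\Psi^X_\tau\phi$ remains in $Aut_k(\mathcal{F}_\pi)$; thus $A$ is $\Psi^X_\tau$-invariant and $R_k^tX$ is tangent to it. Minimality of $Gal_k(X/S)=V(R_k^tX,e(M))$ then gives $Gal_k(X/S)\subset A\subset\pi_*^{-1}(Gal_k(Y/S))$, whence $\pi_*Gal_k(X/S)\subset Gal_k(Y/S)$ and, since $Gal_k(Y/S)$ is closed, $\overline{\pi_*Gal_k(X/S)}\subset Gal_k(Y/S)$.

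The main obstacle I anticipate is the bookkeeping imposed by the rational and partial nature of the data: $\pi$ is only dominant rational, the flows and the prolongations $R_k^tX,R_k^tY$ are rational, and $\pi_*$ is defined only over $Aut_k(\mathcal{F}_\pi)$. Making the intertwining $\pi_*\circ\Psi^X_\tau=\Psi^Y_\tau\circ\pi_*$ and the $\pi_*$-relatedness of the prolonged fields precise over suitable dense open sets, and then checking that the resulting tangency and flow-invariance statements survive passage to Zariski closures, is where the genuine care lies; once that is secured, the two minimality arguments are formal.
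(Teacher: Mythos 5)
Your proposal is correct and follows essentially the same route as the paper: conjugation of flows giving $\pi_*R_k^tX=R_k^tY$, minimality of $Gal_k(X/S)$ to land in $Aut_k(\mathcal{F}_\pi)$, then the two minimality arguments (one for each inclusion) exactly as in the paper's proof. Your extra care in intersecting $\pi_*^{-1}(Gal_k(Y/S))$ with $Aut_k(\mathcal{F}_\pi)$ is a minor refinement the paper leaves implicit, not a different approach.
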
 
\begin{proof}
Notons $exp(\tau X)$ les flots du champ $X$ et $exp(\tau Y)$ les flots du champ $Y$. Nous avons $\pi\circ exp(\tau X)=exp(\tau Y)\circ \pi$. L'action au but des flots $j_k(exp(\tau X))$ sur $Aut_k(M/S)$ pr\'eserve $Aut_k(\mathcal{F}_{\pi})$. Le prolongement $R_k^tX$ est tangent \`a $Aut_k(\mathcal{F}_{\pi})$. Or, l'image de l'identit\'e de $Aut_k(M/S)$ par $\pi_*$ est l'identit\'e de $Aut_k(N/S)$. Par minimalit\'e $Gal_k(X/S)\subset Aut_k(\mathcal{F}_{\pi})$. L'image de $Gal_k(X/S)$ par $\pi_*$ a un sens. 

L'\'egalit\'e $\pi\circ exp(\tau X)=exp(\tau Y)\circ \pi$ donne $\pi_*j_k(exp(\tau X))=j_k(exp(\tau Y))$, donc $\pi_*R_k^tX=R_k^tY$. La sous-vari\'et\'e $\overline{\pi_*(Gal_k(X/S))}\subset Aut(N/S)$ contient l'identit\'e. Elle est tangente \`a $R_k^tY$ puisque la tangence est une propri\'et\'e ferm\'ee. Par minimalit\'e nous avons l'inclusion  $\overline{\pi_*(Gal_k(X/S))}\supset Gal_k(Y/S)$. Inversement, la vari\'et\'e $\pi_*^{-1}(Gal_k(Y/S))$ contient l'identit\'e de $Aut_k(M/S)$  et est tangente \`a $R_k^tX$. La minimalit\'e de $Gal_k(X/S)$ implique l'inclusion $Gal_k(X/S)\subset \pi_*^{-1}(Gal_k(Y/S))$. En prenant l'image par $\pi_*$ de part et d'autre de l'inclusion cela donne l'inclusion $\pi_*(Gal_k(X/S))\subset Gal_k(Y/S)$ ce qui termine la preuve.
\end{proof}
\begin{corollary}
Soit $Y$ un champ de vecteurs rationnel sur $N$ v\'erifiant $d\pi.X=Y$. Alors
\[
\tdiff(Gal(Y/S))\leq \tdiff(Gal(X/S))
\]
\end{corollary}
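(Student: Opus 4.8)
Le plan est de d\'eduire ce corollaire du th\'eor\`eme de projection \ref{thmproj} par un argument \'el\'ementaire sur les dimensions. Pour chaque $k\in\mathbb{N}$, ce th\'eor\`eme fournit l'\'egalit\'e $Gal_k(Y/S)=\overline{\pi_*(Gal_k(X/S))}$. Comme $Gal_k(X/S)$ est irr\'eductible (proposition \ref{dimgroup}) et que $\pi_*$ est un morphisme d\'efini sur $Aut_k(\mathcal{F}_\pi)$, qui contient $Gal_k(X/S)$ d'apr\`es la preuve du th\'eor\`eme \ref{thmproj}, la dimension de l'image, et donc celle de son adh\'erence de Zariski, est major\'ee par celle de la source. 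J'en d\'eduirais l'in\'egalit\'e $\dim Gal_k(Y/S)\leq \dim Gal_k(X/S)$, valable pour tout $k\in\mathbb{N}$.

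Ensuite, j'appliquerais le corollaire \ref{corpoldim} aux deux champs $X$ et $Y$ : il existe un rang $k_0$ et deux polyn\^omes $P_X,P_Y\in\mathbb{Q}[x]$ tels que $P_X(k)=\dim Gal_k(X/S)$ et $P_Y(k)=\dim Gal_k(Y/S)$ pour tout $k\geq k_0$. L'in\'egalit\'e pr\'ec\'edente se traduit alors en $P_Y(k)\leq P_X(k)$ pour tout $k\geq k_0$. Par d\'efinition du type diff\'erentiel, $\tdiff(Gal(Y/S))=\deg P_Y$ et $\tdiff(Gal(X/S))=\deg P_X$, de sorte qu'il ne reste plus qu'\`a comparer les degr\'es de ces deux polyn\^omes \`a partir de l'in\'egalit\'e ponctuelle.

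Le point d\'elicat sera de justifier que l'in\'egalit\'e ponctuelle $P_Y(k)\leq P_X(k)$ entra\^ine bien $\deg P_Y\leq \deg P_X$, ce qui repose sur la positivit\'e du coefficient dominant de $P_Y$. Comme le lemme \ref{malproj} fournit des morphismes dominants $Gal_k(Y/S)\rightarrow Gal_{k-1}(Y/S)$, la suite des dimensions $\dim Gal_k(Y/S)$ est croissante ; le polyn\^ome $P_Y$ prend donc des valeurs positives non born\'ees d\`es qu'il est non constant, et son coefficient dominant est strictement positif. Si l'on avait $\deg P_Y>\deg P_X$, la diff\'erence $P_Y-P_X$ serait de degr\'e $\deg P_Y$ \`a coefficient dominant positif, donc tendrait vers $+\infty$, contredisant $P_Y(k)-P_X(k)\leq 0$ pour $k$ grand (le cas $P_Y$ constant donnant $\deg P_Y=0\leq \deg P_X$ trivialement). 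On conclut ainsi $\deg P_Y\leq \deg P_X$, c'est-\`a-dire l'in\'egalit\'e cherch\'ee $\tdiff(Gal(Y/S))\leq \tdiff(Gal(X/S))$.
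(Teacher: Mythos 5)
Votre preuve est correcte et suit essentiellement la même démarche que celle de l'article : le théorème de projection donne $\dim Gal_k(Y/S)\leq \dim Gal_k(X/S)$ pour tout $k$, puis la comparaison des polynômes de dimension différentielle fournit l'inégalité des types différentiels. Vous explicitez seulement davantage le dernier pas (positivité du coefficient dominant pour passer de l'inégalité ponctuelle à celle des degrés), que l'article laisse implicite.
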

\begin{proof}
Par le th\'eor\`eme (\ref{thmproj}), pour tout $k\in\mathbb{N}$, $\dim(Gal_k(Y/S))=\dim(\pi_*Gal_k(X/S))$. Donc $\dim(Gal_k(Y/S))\leq \dim(Gal_k(X/S))$. Le polyn\^{o}me de dimension diff\'erentielle du groupo\"ide de Galois de $Y$ sur $S$ croit moins vite que celui de $X$ sur $S$. Ceci donne l'in\'egalit\'e attendue sur les degr\'es de ces polyn\^{o}mes.
\end{proof}

\subsection{Le th\'eor\`eme de sp\'ecialisation}\label{secspe}
Le th\'eor\`eme de sp\'ecialisation compare le groupo\"{i}de de Galois d'ordre $k\in\mathbb{N}$ du champ de vecteurs $X$ tangent aux fibres du morphisme lisse $\rho :M\rightarrow S$ et le groupo\"{i}de de Galois d'ordre $k$ de la restriction $X\vert_q$ de ce champ de vecteurs \`a la fibre $M_q$ de $\rho$ en $q\in S$. 

Remarquons que la fibre $R_k(M/S)_q$ du morphisme $R_k(M/S)\rightarrow S$ en $q\in S$ est exactement l'espace $R_k(M_q)$ des rep\`eres d'ordre $k$ de la fibre $M_q$. Remarquons aussi que la restriction $(R_kX)\vert_q$ du prolongement du champ de vecteurs $X$ \`a la fibre $R_k(M_q)$ et le prolongement $R_kX\vert_q$ de la restriction du champ de vecteurs $X\vert_q$ sur la fibre $R_k(M_q)$ sont deux champs de vecteurs construits avec les m\^{e}mes flots: les flots de la restriction $X\vert_q$ sur $M_q$. Ils sont donc \'egaux. En r\'esum\'e:
\[ 
R_k(M/S)_q=R_k(M_q)\qquad\mbox{et}\qquad (R_kX)\vert_q=R_kX\vert_q
\]

\begin{theorem}\label{thmsp\'ec}
Soient $\rho:M\rightarrow S$ un morphisme lisse \`a fibre connexes entre deux vari\'et\'es lisses irr\'eductibles et $X$ un champ de vecteurs rationnel tangent aux fibres de $\rho$, \textit{i.e.} $d\rho.X=0$. Alors:
\begin{enumerate}
\item Pour tout $q\in S$ o\`{u} cela a un sens : 
\begin{itemize}
\item $Gal_{k}(X\vert_q)\subset Gal_k(X/S)\vert_{M_q \times M_q}$
\item $\dim_S (Gal_k(X/S))\geq \dim_{\mathbb{C}}(Gal_{k}(X\vert_q))$
\end{itemize}
\item Pour $q\in S$ g\'en\'eral, 
 \begin{itemize}
 \item $Gal_{k}(X\vert_q)=Gal_k(X/S)\vert_{M_q\times M_q}$
\item $\dim_S (Gal_k(X/S))= \dim_{\mathbb{C}}(Gal_{k}(X\vert_q))$
\end{itemize}
\end{enumerate}
\end{theorem}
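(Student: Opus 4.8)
Le plan est de travailler, comme dans toute la section, du c\^ot\'e des relations d'\'equivalence $\mathcal{R}_k^c$ plut\^ot que des groupo\"{i}des $\mathcal{G}_k^c$, en utilisant la correspondance $\Phi_k$ et l'\'equivalence des trois d\'efinitions (th\'eor\`eme \ref{\'egalit\'e}). Je fixerais une int\'egrale premi\`ere maximale $\pi:R_k(M/S)\dashrightarrow N$ de $R_kX$; comme $R_kX$ est tangent aux fibres de $R_k(M/S)\to S$ et que les fonctions provenant de $S$ sont des int\'egrales premi\`eres, $\pi$ se factorise par un morphisme $\tilde\rho:N\to S$. Pour $q\in S$, la restriction $\pi_q:R_k(M_q)\dashrightarrow N_q$ est une int\'egrale premi\`ere de $R_kX\vert_q=(R_kX)\vert_q$, et ses fibres sont exactement les fibres de $\pi$ situ\'ees dans $R_k(M_q)=R_k(M/S)_q$. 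Tout le probl\`eme revient alors \`a comparer $\mathcal{R}_k^c(X)$ restreinte \`a la fibre avec $\mathcal{R}_k^c(X\vert_q)$.

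Je traiterais d'abord le point (1). Pour l'inclusion des groupo\"{i}des, toute int\'egrale premi\`ere $H$ de $R_kX$ se restreint en une int\'egrale premi\`ere $H\vert_q$ de $R_kX\vert_q$; un \'el\'ement $\phi\in Gal_k(X\vert_q)$ pr\'eserve donc chaque $H\vert_q$, c'est-\`a-dire $H\circ j_k(\phi)=H$ le long de $R_k(M_q)$, ce qui le place dans $Gal_k(X/S)\vert_{M_q\times M_q}$ d'apr\`es la d\'efinition \ref{Rc}. Pour l'in\'egalit\'e de dimensions, j'utiliserais la proposition \ref{dimgroup}: si $v$ (resp.\ $v_q$) d\'esigne la dimension g\'en\'erique des sous-vari\'et\'es minimales tangentes \`a $R_kX$ (resp.\ \`a $R_kX\vert_q$), alors $\dim_S Gal_k(X/S)=\dim R_k(M_q)+v-\dim\Gamma_k$ et $\dim Gal_k(X\vert_q)=\dim R_k(M_q)+v_q-\dim\Gamma_k$. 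Le point clef est que pour $a\in R_k(M_q)$ la sous-vari\'et\'e minimale tangente $V(R_kX,a)$ est contenue dans $R_k(M_q)$, qui est ferm\'ee et $R_kX$-invariante, donc co\"incide avec $V(R_kX\vert_q,a)$; en prenant le maximum sur $a\in R_k(M_q)$ on obtient $v_q\leq v$, d'o\`u l'in\'egalit\'e voulue pour tout $q$.

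Le point (2) est la partie la plus d\'elicate. La principale difficult\'e sera de montrer que, pour $q$ g\'en\'eral, $\pi_q$ reste une int\'egrale premi\`ere \emph{maximale} de $R_kX\vert_q$. J'invoquerais le th\'eor\`eme \cite[theorem 1.1]{Bonnet}: il existe un ensemble g\'en\'eral $E\subset R_k(M/S)$ o\`u la fibre de $\pi$ passant par $a$ co\"incide avec $V(R_kX,a)$, qui est irr\'eductible par \cite[corollary 2.6]{Bonnet}. Le lemme \ref{fibredense} appliqu\'e \`a $R_k(M/S)\to S$ et \`a $E$ assure que pour $q$ g\'en\'eral l'ensemble $E\cap R_k(M_q)$ est dense dans $R_k(M_q)$; quitte \`a l'intersecter avec l'ensemble g\'en\'eral donn\'e par le m\^eme th\'eor\`eme appliqu\'e \`a $R_kX\vert_q$, je choisirais $a$ g\'en\'eral \`a la fois pour $R_kX$ et pour $R_kX\vert_q$. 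Pour un tel $a$, on a d'une part $V(R_kX,a)=V(R_kX\vert_q,a)$ par l'argument ci-dessus, d'autre part $V(R_kX\vert_q,a)$ est la fibre g\'en\'erique d'une int\'egrale premi\`ere maximale de $R_kX\vert_q$. Ainsi les fibres g\'en\'eriques de $\pi_q$ sont des sous-vari\'et\'es minimales tangentes \`a $R_kX\vert_q$, ce qui force $\pi_q^*\mathbb{C}(N_q)=\mathbb{C}(R_k(M_q))^{R_kX\vert_q}$ et donne au passage $v_q=v$, donc l'\'egalit\'e des dimensions.

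Il resterait \`a en d\'eduire l'\'egalit\'e ensembliste. Je noterais $\Psi:\mathcal{R}_k^c(X)\to S$ le morphisme induit, bien d\'efini car $\pi(a)=\pi(b)$ impose que $a$ et $b$ soient dans une m\^eme fibre de $R_k(M/S)\to S$, et $O=R_k(M/S)^\circ\times_N R_k(M/S)^\circ$, ouvert dense de $\mathcal{R}_k^c(X)$ par la d\'efinition \ref{defRc}. Le lemme \ref{fibredense} fournit un ouvert $W\subset S$ tel que, pour $q\in W$, $\Psi^{-1}(q)\cap O$ soit dense dans $\Psi^{-1}(q)$. Or $\Psi^{-1}(q)=\mathcal{R}_k^c(X)\cap\left(R_k(M_q)\times R_k(M_q)\right)$ est pr\'ecis\'ement la restriction du groupo\"{i}de \`a la fibre, tandis que $\Psi^{-1}(q)\cap O=R_k(M_q)^\circ\times_{N_q}R_k(M_q)^\circ$; par cons\'equent $\mathcal{R}_k^c(X)\vert_{R_k(M_q)\times R_k(M_q)}=\overline{R_k(M_q)^\circ\times_{N_q}R_k(M_q)^\circ}$. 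Gr\^ace \`a la maximalit\'e de $\pi_q$ \'etablie au paragraphe pr\'ec\'edent et \`a l'ind\'ependance de $\mathcal{R}_k^c$ vis-\`a-vis du choix de l'int\'egrale premi\`ere maximale (lemme \ref{Rcbiendef}), ce dernier membre est exactement $\mathcal{R}_k^c(X\vert_q)$. En repassant par $\Phi_k$, j'obtiendrais l'\'egalit\'e cherch\'ee $Gal_k(X\vert_q)=Gal_k(X/S)\vert_{M_q\times M_q}$ pour $q$ g\'en\'eral, l'intersection des conditions de g\'en\'ericit\'e (celles de Bonnet, de $W$ et du lieu de maximalit\'e) demeurant une condition g\'en\'erale sur $q$.
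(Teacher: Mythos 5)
Votre proposition est correcte et suit essentiellement la m\^eme route que l'article : les \'enonc\'es de dimension via les invariants $v$ et $v_q$ de Bonnet et la proposition \ref{dimgroup}, puis l'\'egalit\'e ensembliste pour $q$ g\'en\'eral en restreignant une int\'egrale premi\`ere maximale \`a la fibre et en combinant les lemmes \ref{fibredense} et \ref{prodfibre}. La seule variation (sans cons\'equence) est que vous \'etablissez l'inclusion $Gal_{k}(X\vert_q)\subset Gal_k(X/S)\vert_{M_q \times M_q}$ en restreignant les int\'egrales premi\`eres, l\`a o\`u l'article invoque la minimalit\'e de la d\'efinition topologique (\ref{Rb}).
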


\begin{proof}
Int\'eressons nous d'abord au cas des dimensions.
Suivant P. Bonnet, \cite{Bonnet} nous utiliserons les notations suivantes. Pour $j_k(r)\in R_k(M/S)$ $V(R_kX,j_k(r))$ est la cl\^oture de Zariski de la trajectoire passant par $j_k(r)$. Ce maximum est atteint sur un ensemble g\'en\'eral $E$ de jets de rep\`eres. Pour $q\in S$ o\`{u} cela a un sens, notons $v_q$ le maximum de la dimension de $V(R_kX,j_k(r))=V(R_kX\vert_q,j_k(r))$ pour $j_k(r)\in R_k(M_q)$. Remarquons d\'ej\`a que $v_q\leq v$.
L'image $F$ de $E$ par $R_k(M/S)\rightarrow S$ est un ensemble g\'en\'eral de $S$. Pour tout $q\in F$, l'ensemble $R_k(M_q)\cap E$ est non vide. Ceci donne l'\'egalit\'e $v_q=v$ pour tout $q\in F$. Par la proposition (\ref{dimgroup}), la dimension du groupo\"{i}de de Galois du champ $X$ est $\dim(R_k(M/S))+v-\dim(\Gamma_k)$. De m\^{e}me, $\dim(Gal(R_kX\vert_q))=\dim(R_k(M_q))+v_q-\dim(\Gamma_k)=\dim(R_k(M/S))-\dim(S)+v_q-\dim(\Gamma_k)$. Donc $\dim(Gal(X/S))\geq \dim(Gal(X\vert_q))+\dim(S)$ avec \'egalit\'e pour $q$ dans l'ensemble g\'en\'eral $F$.

Pour les inclusions, nous faisons appel \`a la d\'efinition (\ref{Rb}) dite topologique du groupo\"{i}de de Galois. Pour $q\in S$ o\`{u} cela a un sens, la sous-vari\'et\'e $Gal(X/S)\vert_{R_k(M_q)\times R_k(M_q)}$ est une sous-vari\'et\'e tangente \`a $R_k^tX\vert_q$ et contient l'identit\'e de $Aut(M_q)$. Elle contient donc $Gal(X\vert_q)$ par minimalit\'e de cette derni\`ere. Ceci nous donne les inclusions escompt\'ees.
	
	Enfin, montrons les \'egalit\'es en faisant appel \`a la d\'efinition qui utilise les int\'egrales premi\`eres. L'application quotient $\Phi_k:R_k(M/S)\times R_k(M/S)\rightarrow Aut_k(M/S)$ fait correspondre une sous-vari\'et\'e $\mathcal{R}\subset R_k(M/S)\times R_k(M/S)$ au groupo\"{i}de de Galois d'ordre $k$ de $X$ sur $S$. De m\^{e}me, pour $q\in S$, il correspond une sous-vari\'et\'e $\mathcal{R}_q\subset R_k(M_q)\times R_k(M_q)$ au groupo\"{i}de de Galois d'ordre $k$ de $X\vert_q$. Il suffit de montrer que pour $q\in S$ g\'en\'eral, $\mathcal{R}_q=\mathcal{R}\cap (R_k(M_q)\times R_k(M_q))$. Soit $\pi :R_k(M/S)\dashrightarrow N$ une int\'egrale premi\`ere maximale du champ $X$. Soient $q\in F$, $j_k(r)\in  R_k(M_q)\cap E$. Nous savons que la fibre en $j_k(r)$ de la restriction $\pi\vert_{R_k(M_q)}:R_k(M_q)\dashrightarrow \overline{\pi(R_k(M_q))}$, v\'erifie $\overline{\pi\vert_{R_k(M_q)}^{-1}(\pi\vert_{R_k(M_q)}(j_k(r)))}=V({\mathcal{D}},j_k(r))$. Cette restriction est une int\'egrale premi\`ere maximale du champ $X\vert_q$. Donc $\mathcal{R}_q=R_k(M_q)\times_NR_k(M_q)$. Notons $O$ un ouvert de $N$ inclus dans l'image de $\pi$. Nous savons par le lemme (\ref{prodfibre}) que $\mathcal{R}=\overline{\pi^{-1}(O)\times_O\pi^{-1}(O)}$. Le lemme (\ref{fibredense}) appliqu\'e au morphisme $\mathcal{R}\rightarrow S$, not\'e $\tilde{\rho}$, et \`a l'ouvert dense $\pi^{-1}(O)\times_O\pi^{-1}(O)$ de $\mathcal{R}$ nous dit que pour $q\in S$ g\'en\'erique, $\tilde{\rho}^{-1}(q)\cap \pi^{-1}(O)\times_O\pi^{-1}(O)$ est dense dans $\tilde{\rho}^{-1}(q)$. Autrement dit, pour $q\in S$ g\'en\'erique, $\overline{\pi\vert_{R_k(M_q)}^{-1}(O)\times_O\pi\vert_{R_k(M_q)}^{-1}(O)}=\mathcal{R}\cap R_k(M_q)\times R_k(M_q)$. Au final nous en d\'eduisons que pour $q\in S$ g\'en\'eral, $\mathcal{R}_q=\mathcal{R}\cap R_k(M_q)\times R_k(M_q)$ ce qui ach\`eve la preuve du th\'eor\`eme.
\end{proof}

Voici la cons\'equence directe du r\'esultat de sp\'ecialisation:
 
\begin{theorem}\label{thmcroissance}
Soit $q_0\in S$. Pour $q\in S$ g\'en\'eral,
\[
\tdiff(Gal(X\vert_{q_0}))\leq \tdiff(Gal(X\vert_q))
\]
\end{theorem}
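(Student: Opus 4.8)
Le plan est de comparer les polyn\^omes de dimension diff\'erentielle des deux groupo\"ides $Gal(X\vert_{q_0})$ et $Gal(X\vert_q)$ \`a l'aide du th\'eor\`eme de sp\'ecialisation \ref{thmsp\'ec}. Je commencerais par invoquer le corollaire \ref{corpoldim} appliqu\'e au champ $X\vert_q$ sur la fibre $M_q$, pour laquelle l'espace des param\`etres est r\'eduit \`a un point : il fournit, pour chaque $q$ o\`u cela a un sens, un polyn\^ome $P_q\in\mathbb{Q}[x]$ tel que $P_q(k)=\dim(Gal_k(X\vert_q))$ pour $k$ assez grand, et dont le degr\'e est exactement $\tdiff(Gal(X\vert_q))$. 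On dispose en particulier des polyn\^omes $P_{q_0}$ et $P_q$, et il s'agit de comparer leurs degr\'es.

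La difficult\'e principale est que l'ensemble g\'en\'eral fourni par la partie (2) du th\'eor\`eme \ref{thmsp\'ec} d\'epend a priori de l'ordre $k$, alors qu'il faut un seul $q$ g\'en\'eral valable pour tous les ordres. Pour chaque $k\in\mathbb{N}$, cette partie (2) fournit un ensemble g\'en\'eral $F_k\subset S$ tel que $\dim_{\mathbb{C}}(Gal_k(X\vert_q))=\dim_S(Gal_k(X/S))$ pour tout $q\in F_k$. Chaque $F_k$ est le compl\'ementaire d'une union d\'enombrable de sous-vari\'et\'es ferm\'ees strictes ; l'union sur $k$ de ces unions d\'enombrables reste d\'enombrable, de sorte que $F:=\bigcap_{k\in\mathbb{N}}F_k$ est encore g\'en\'eral. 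Pour $q\in F$ on a alors l'\'egalit\'e $\dim(Gal_k(X\vert_q))=\dim_S(Gal_k(X/S))$ simultan\'ement pour tous les ordres $k$.

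J'appliquerais ensuite la partie (1) du th\'eor\`eme \ref{thmsp\'ec} au point $q_0$ : pour tout $k$ o\`u cela a un sens, $\dim_{\mathbb{C}}(Gal_k(X\vert_{q_0}))\leq \dim_S(Gal_k(X/S))$. La force de l'argument est que c'est la m\^eme quantit\'e $\dim_S(Gal_k(X/S))$ qui majore $\dim(Gal_k(X\vert_{q_0}))$ et qui \'egale $\dim(Gal_k(X\vert_q))$ pour $q\in F$. En combinant, il viendrait, pour tout $k$,
\[
\dim(Gal_k(X\vert_{q_0}))\leq \dim_S(Gal_k(X/S))=\dim(Gal_k(X\vert_q)),
\]
et donc $P_{q_0}(k)\leq P_q(k)$ pour $k$ assez grand.

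Il resterait \`a passer de cette in\'egalit\'e de valeurs \`a une in\'egalit\'e de degr\'es, ce qui repose sur la positivit\'e des coefficients dominants. D'apr\`es le th\'eor\`eme \ref{Kolchin}, le coefficient de t\^ete $a_\ell$ dans l'\'ecriture binomiale d'un polyn\^ome de dimension diff\'erentielle est un degr\'e de transcendance diff\'erentielle, donc un entier strictement positif d\`es que le degr\'e $\ell$ l'est. Si l'on avait $\deg P_{q_0}>\deg P_q$, le polyn\^ome $P_{q_0}-P_q$ serait de degr\'e $\deg P_{q_0}>0$ \`a coefficient dominant positif, donc tendrait vers $+\infty$, contredisant $P_{q_0}(k)\leq P_q(k)$ pour $k$ grand. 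On conclurait ainsi $\deg P_{q_0}\leq \deg P_q$, c'est-\`a-dire $\tdiff(Gal(X\vert_{q_0}))\leq \tdiff(Gal(X\vert_q))$ pour $q$ dans l'ensemble g\'en\'eral $F$, ce qui est l'in\'egalit\'e annonc\'ee.
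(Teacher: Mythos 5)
Votre preuve est correcte et suit essentiellement la m\^eme voie que celle de l'article : toutes deux reposent sur la combinaison des parties (1) et (2) du th\'eor\`eme de sp\'ecialisation \ref{thmsp\'ec} via la quantit\'e commune $\dim_S(Gal_k(X/S))$, la preuve du papier se r\'eduisant \`a l'affichage $\dim_{\mathbb{C}}(Gal_{k}(X\vert_q))=\dim_S (Gal_k(X/S))\geq \dim_{\mathbb{C}}(Gal_{k}(X\vert_{q_0}))$ pour $q$ g\'en\'eral. Vous ne faites qu'expliciter deux points laiss\'es implicites : l'intersection d\'enombrable des ensembles g\'en\'eraux $F_k$ sur tous les ordres $k$ (l\'egitime vu la d\'efinition de ``g\'en\'eral'' adopt\'ee dans l'article), et le passage de l'in\'egalit\'e des valeurs $P_{q_0}(k)\leq P_q(k)$ \`a l'in\'egalit\'e des degr\'es via la positivit\'e des coefficients dominants.
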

\begin{proof}
Le th\'eor\`eme \ref{thmsp\'ec} nous dit que pour $q\in S$ g\'en\'eral
\[
\dim_{\mathbb{C}}(Gal_{k}(X\vert_q))=\dim_S (Gal_k(X/S))\geq \dim_{\mathbb{C}}(Gal_{k}(X\vert_{q_0}))
\]
 Ceci conclut la preuve du th\'eor\`eme.
\end{proof}

\section{Equations du second ordre}

Dans cette section, nous supposons que $M=\mathbb{C}^3\times S$ dont les coordonn\'ees sont $(x,u,v,q)$. Nous nous int\'eressons \`a l'\'equation
\begin{equation}\tag{$E$}
\frac{d^2u}{dx^2}=F\left(x,u,\frac{du}{dx},q\right)
\label{E}
\end{equation}
o\`{u} $F\in \mathbb{C}(x,u,v,q)$. 
Le groupo\"ide de Galois $Gal(\eqref{E}/\mathbb{C}(S))$ de l'\'equation est le groupo\"ide de Galois $Gal(X_F/S)$ du champ de vecteurs associ\'e: 
\[
X_F=\frac{\partial}{\partial x}+ v\frac{\partial}{\partial u}+F(x,u,v,q)\frac{\partial}{\partial v}
\]
L'\'equation sp\'ecialis\'ee en $q_0\in S$ tel que cela a un sens est 
\begin{equation}\tag{$E(q_0)$}
\frac{d^2u}{dx^2}=F\left(x,u,\frac{du}{dx},q_0\right)
\label{Eq}
\end{equation}
Le groupo\"ide de Galois $Gal(E(q_0))$ de l'\'equation sp\'ecialis\'ee est le groupo\"ide de Galois $Gal(X_F\vert_{q_0})$.
\subsection{Calcul du groupo\"ide de Galois}
Commen\c{c}ons par un lemme qui donne une limite sur le type diff\'erentiel du groupo\"ide de Galois de ces \'equations.

\begin{lemma} \label{lemquad2}
\[
\tdiff(Gal(X_F)/S)\leq 2
\]
\end{lemma}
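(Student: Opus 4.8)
The plan is to bound the Galois groupoid from above by the groupoid of $S$-automorphisms preserving the two objects naturally attached to a second order equation, the ``time'' form $dx$ and the field $X_F$ itself, and then to read off the differential type from an explicit coordinate description of this larger groupoid. First I would record the two invariances. Since $\iota_{X_F}dx=1$ is constant, $\mathcal{L}_{X_F}dx=d(\iota_{X_F}dx)=0$, and trivially $\mathcal{L}_{X_F}X_F=0$. By Examples \ref{exinvforme} and \ref{exinvvect} the associated order-one functions are first integrals of $R_1X_F$, hence are preserved by $Gal_1(X_F/S)$ and, these being order-one conditions, by every $Gal_k(X_F/S)$. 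Consequently $Gal_k(X_F/S)$ is contained in the order-$k$ part $\mathcal{G}_k$ of the sub-pro-groupoid
\[
\mathcal{G}:=\{\phi\in Aut(M/S)\ \vert\ \phi^*dx=dx,\ \phi^*X_F=X_F\}.
\]

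Next I would describe $\mathcal{G}$ in the fibre coordinates $(x,u,v)$. Writing $\phi=(\phi_x,\phi_u,\phi_v)$, the condition $\phi^*dx=dx$ forces $\phi_x=x+c$ with $c$ locally constant, while $\phi^*X_F=X_F$ unwinds into $X_F(\phi_x)=1$ (automatic), together with $X_F(\phi_u)=\phi_v$ and $X_F(\phi_v)=F(\phi_x,\phi_u,\phi_v,q)$. Thus an element of $\mathcal{G}$ is determined by the constant $c$ and by the single component $\phi_u$, which is constrained by the second order equation $X_F^2(\phi_u)=F(x+c,\phi_u,X_F\phi_u,q)$ in the direction of the flow of $X_F$.

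I would then estimate $\dim\mathcal{G}_k$. As $X_F$ has non-vanishing $\partial/\partial x$-component, near a generic point there are formal coordinates $(x,a,b)$ in which $X_F=\partial/\partial x$; in these coordinates the constraint on $\phi_u$ becomes a second order ordinary differential equation $\partial_x^2\phi_u=\widetilde{F}(x,\phi_u,\partial_x\phi_u,a,b)$, whose formal solutions are parametrised by the two initial data $\phi_u|_{x=0}$ and $\partial_x\phi_u|_{x=0}$, each a formal function of the two transverse variables $(a,b)$. Prolonging the equation expresses every $x$-derivative of $\phi_u$ of order at least two through the lower ones and their $(a,b)$-derivatives, so the $k$-jet of $\phi_u$ is determined by the $k$-jets of two functions of two variables. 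Hence the fibres of $\mathcal{G}_k\to M\times_S M$ have dimension $\binom{k+2}{2}+\binom{k+1}{2}+O(1)=O(k^2)$, whence $\dim\mathcal{G}_k=O(k^2)$. The inclusion $Gal_k(X_F/S)\subseteq\mathcal{G}_k$ then gives $\dim Gal_k(X_F/S)\le\dim\mathcal{G}_k=O(k^2)$ for all $k$, and since by Corollary \ref{corpoldim} the function $k\mapsto\dim Gal_k(X_F/S)$ coincides with a polynomial for $k$ large, that polynomial has degree at most $2$, i.e. $\tdiff(Gal(X_F/S))\le2$.

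The main obstacle is the dimension count of the third paragraph: one must control the jet-order shifts introduced by the relations $\phi_v=X_F\phi_u$ and by the prolongations of the second order constraint, and justify that passing to the formal flow-box coordinates $(x,a,b)$ leaves the dimensions of the algebraic jet spaces $\mathcal{G}_k$ unchanged (dimension being invariant under such local formal isomorphisms of the ambient jet bundles). Only the leading term $O(k^2)$ is actually needed, which keeps the bookkeeping light; note also that preserving $dx$ alone would only give the trivial bound $\tdiff\le3$ of Corollary \ref{corpoldim}, so the invariance of $X_F$ is essential to gain the last unit.
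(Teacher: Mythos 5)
Your proposal is correct and follows essentially the same route as the paper: bound $Gal(X_F/S)$ by the groupoid of $S$-automorphisms preserving $dx$ and $X_F$, straighten $X_F$ in (formal/analytic) flow-box coordinates, and observe that the resulting jet spaces are parametrised by a constant and two functions of two transverse variables, hence have dimension $O(k^2)$. The only cosmetic difference is the bookkeeping — the paper reads the count directly off the normal form $(x,u,v)\mapsto(x+c,f_1(u,v),f_2(u,v))$ via its Example on $Aut(U)$ for $U\subset\mathbb{C}^2$, whereas you parametrise by $\phi_u$ and the initial data of the second-order constraint — but both yield the same leading term and the bound $\tdiff\le 2$.
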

\begin{proof}
La $1$-forme $dx$ et la fonction $q$ sont pr\'eserv\'ees par le champ $X_F$: $\mathcal{L}_{X_F}dx=0,\ \mathcal{L}_{X_F}q=0$. En se rappelant les exemples (\ref{exinvvect}) et (\ref{exinvforme}), nous avons l'inclusion:
\[
Gal(X_F/S)\subset \left\{ \phi\in Aut(M/S)\ \vert\ \phi^*dx=dx,\ \phi^*q=q,\ \phi^*X_F=X_F\right\}
\]
Soient $U\subset M$ un ouvert analytique, $pr_{uv}$ la projection de $U$ sur les coordonn\'ees $(u,v)$ et $\phi\in Aut(U/S)$. Si l'application $\phi$ pr\'eserve la $1$-forme $dx$ et la fonction $q$ alors elle s'\'ecrit $(x,u,v,q)\rightarrow (x+c,f_1(x,u,v),f_2(x,u,v),q)$. Apr\`es un changement de variables analytiques en diminuant \'eventuellement l'ouvert $U$, le champ de vecteurs $X_F$ se redresse en $\frac{\partial}{\partial x}$. Si $\phi$ pr\'eserve en plus le champ $X_F$ alors elle s'\'ecrit dans les nouvelles coordonn\'ees $(x,u,v,q)\rightarrow (x+c,f_1(u,v),f_2(u,v),q)$. Autrement dit, $Gal(X_F/S)\vert_{U\times U}$ est inclus dans l'ensemble 
\[
\mathcal{G}= \left\{ \phi\in Aut(U/S)\ \big\vert\ \phi(x,u,v,q)=(x+c,f_1(u,v),f_2(u,v),q),\ c\in\mathbb{C},\ \frac{\partial(f_1,f_2)}{\partial(u,v)} \not = 0 \right\}
\]
C'est l'exemple (\ref{extypediff}) donn\'e dans la section (\ref{secpoldiff}). L'ensemble des jets d'ordre $k\in\mathbb{N}$ de $\mathcal{G}$ est une vari\'et\'e analytique de dimension $2+2\binom{k+2}{2}+2+d$.
Pour tout $k\in\mathbb{N}$, la dimension de la vari\'et\'e analytique $Gal_k(X_F)\vert_{U\times U}$ est la m\^{e}me que celle de la vari\'et\'e alg\'ebrique $Gal_k(X_F)$. L'inclusion pr\'ec\'edente nous dit que $\dim(Gal_k(X_F))\leq 2\binom{k+2}{2}+4+d$, ce qui ach\`eve la preuve.
\end{proof}

Soit $U$ un ouvert analytique de $\mathbb{C}^2$. E.Cartan a donn\'e dans (\cite{Cartan}) une classification analytique des sous-groupo\"ides de $Aut(U)$ d\'efinis par des invariants diff\'erentiels et v\'erifiant une hypoth\`ese de r\'egularit\'e appel\'ee involutivit\'e. Cette hypoth\`ese est \'equivalente \`a l'hypoth\`ese d'involutivit\'e pr\'esente dans l'article \cite{Malgrange} de B. Malgrange. Pour chaque param\`etre fix\'e, les th\'eor\`emes ($4.2.2$) et ($4.3.1$) de cet article appliqu\'es \`a notre situation nous disent qu'il existe un ouvert $O\subset \mathbb{C}^3$ Zariski dense au-dessus duquel le groupo\"ide de Galois v\'erifie cette hypoth\`ese.

\begin{theorem}\label{corvolume}
Soient $q_0\in S$, $dvol$ une $3$-forme rationnelle sur $\mathbb{C}^3$ et $O\subset\mathbb{C}^3$ un ouvert sur lequel $dvol$ et $X_F\vert_{q_0}$ sont bien d\'efinis. Notons $Vol(E(q_0))$ la cl\^{o}ture dans $Aut(\mathbb{C}^3)$ de 
\[
\{\phi\in Aut(O)\ \vert\ \phi^*dx=dx,\ \phi^*X_F\vert_{q_0}=X_F\vert_{q_0},\ \phi^*dvol=dvol\}
\]
Si $\mathcal{L}_{X_F\vert_{q_0}}dvol=0$ et $\tdiff(Gal(E(q_0)))=2$, alors $Gal(E(q_0))=Vol(E(q_0))$.
\end{theorem}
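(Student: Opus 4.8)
The plan is to prove the two inclusions separately, the inclusion $Gal(E(q_0))\subseteq Vol(E(q_0))$ being formal and the reverse one resting on Cartan's classification.

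First I would establish $Gal(E(q_0))\subseteq Vol(E(q_0))$. The field $X_F\vert_{q_0}$ preserves the time form $dx$, preserves itself, and preserves $dvol$ by the hypothesis $\mathcal{L}_{X_F\vert_{q_0}}dvol=0$; following Examples \ref{exinvvect} and \ref{exinvforme} this means that the flow $\exp(\tau X_F\vert_{q_0})$ lies in the set cut out by $\phi^*dx=dx$, $\phi^*X_F\vert_{q_0}=X_F\vert_{q_0}$, $\phi^*dvol=dvol$. Hence $Vol(E(q_0))$, being the Zariski closure of this set, contains the identity and is tangent to the prolongation $R^t_k(X_F\vert_{q_0})$, tangency being a closed condition. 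The minimality in the topological Definition \ref{Rb} then yields $Gal_k(E(q_0))\subseteq Vol(E(q_0))$ for every $k$, whence the inclusion.

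For the reverse inclusion I would reduce to a transverse surface. On the Zariski-dense open set $O$ over which $Gal(E(q_0))$ is involutive, provided by theorems $4.2.2$ and $4.3.1$ of \cite{Malgrange}, I straighten $X_F\vert_{q_0}$ analytically to $\frac{\partial}{\partial x}$; since $\mathcal{L}_{X_F\vert_{q_0}}dx=0$ and $\mathcal{L}_{X_F\vert_{q_0}}dvol=0$, in these coordinates $dvol=h(u,v)\,dx\wedge du\wedge dv$. Exactly as in the proof of Lemma \ref{lemquad2}, any automorphism with $\phi^*dx=dx$ and $\phi^*X_F\vert_{q_0}=X_F\vert_{q_0}$ has the form $(x,u,v)\mapsto(x+c,\Psi(u,v))$, and it preserves $dvol$ precisely when $\Psi$ preserves the transverse area form $\omega=h\,du\wedge dv$. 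Thus $Vol(E(q_0))$ induces on the transverse plane the area-preserving pseudogroup $SAut(\mathbb{C}^2,\omega)$, while $Gal(E(q_0))$ induces an involutive Lie sub-pseudogroup $\Gamma\subseteq SAut(\mathbb{C}^2,\omega)$. Since the $x$-translation factor contributes only a constant to $\dim Gal_k(E(q_0))$, the reduction preserves the differential type, so $\tdiff(\Gamma)=\tdiff(Gal(E(q_0)))=2$. Here I would invoke Cartan's classification \cite{Cartan}: the involutive transitive Lie pseudogroups of $\mathbb{C}^2$ of differential type $2$ are exactly the pseudogroup of all local diffeomorphisms, the pseudogroup preserving a volume form up to a constant, and the pseudogroup preserving a volume form. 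Of these, only the last is contained in $SAut(\mathbb{C}^2,\omega)$, every other sub-pseudogroup of $SAut(\mathbb{C}^2,\omega)$ being imprimitive or intransitive and hence of type $\leq 1$. Consequently $\Gamma=SAut(\mathbb{C}^2,\omega)$, and unwinding the straightening gives $Gal(E(q_0))=Vol(E(q_0))$.

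The hard part will be the transverse reduction: the straightening of $X_F\vert_{q_0}$ is only local and analytic, so I must check that the algebraically defined $Gal(E(q_0))$ and $Vol(E(q_0))$ really correspond, after this analytic change of coordinates, to honest involutive Lie pseudogroups on the transverse surface to which Cartan's analytic classification applies, and that the differential type is genuinely unchanged by the reduction. A subsidiary point is the transitivity of $\Gamma$ required by Cartan's theorem, which I expect to follow from $\tdiff(\Gamma)=2$, since an intransitive pseudogroup would necessarily have strictly smaller type.
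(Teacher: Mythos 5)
Your proposal follows essentially the same route as the paper: the inclusion $Gal(E(q_0))\subseteq Vol(E(q_0))$ by invariance of $dx$, $X_F\vert_{q_0}$ and $dvol$ plus minimality, then straightening of the vector field, reduction to the transverse $(u,v)$-plane via ${pr_{uv}}_*$ as in Lemma \ref{lemquad2}, and Cartan's classification to show that the type-$2$ hypothesis forces the projected groupoid to be the full area-preserving pseudogroup; the paper closes the local-to-global issue you flag by an equality of dimensions together with the irreducibility of the order-$k$ groupoids. One caution on the crux: your justification that any proper sub-pseudogroup of $SAut(\mathbb{C}^2,\omega)$ has type $\leq 1$ because it is ``imprimitive or intransitive and hence of type $\leq 1$'' rests on a false general principle (the transitive imprimitive pseudogroup preserving the foliation $du=0$, and the intransitive pseudogroup fixing $u$, both have type $2$); what is actually needed, and what the paper reads off from Cartan's table in \cite{Cartan}, is that among the seven type-$2$ entries $g, g_1, g_2, g_3, g_{01}, g_{22}, g_{29}$ only $g_1$ is contained in $g_1$. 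The conclusion you draw is the correct one, but that specific fact must be taken from the explicit classification rather than from a primitivity/transitivity argument.
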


\begin{proof}
Puisque $\mathcal{L}_{X_F\vert_{q_0}}dx=0$, $\mathcal{L}_{X_F\vert_{q_0}}X_F\vert_{q_0}=0$ et $\mathcal{L}_{X_F\vert_{q_0}}dvol=0$, nous avons l'inclusion $Gal(E(q_0))\subset Vol(E(q_0))$.

Soit $U\subset O$ un ouvert analytique convenable. La preuve du lemme (\ref{lemquad2}) permet de se ramener \`a
\begin{equation}\tag{$\ast$}
Gal(E(q_0))\vert_{U\times U}\subset  \left\{\phi\in Aut(U)\ \big\vert\ \phi(x,u,v)=(x+c,f_1(u,v),f_2(u,v)),\ \frac{\partial(f_1,f_2)}{\partial(u,v)} \not = 0 \right\}
\label{inclusion}
\end{equation}
 et $dvol=dx\wedge du\wedge dv$. Les fonctions $f_1$ et $f_2$ v\'erifient $jac(f_1,f_2)=1$. Nous devons montrer qu'aucune autre \'equation aux d\'eriv\'ees partielles sur $f_1$ et $f_2$ n'est compatible avec les hypoth\`eses du th\'eor\`eme.

La classification des pseudo groupes de Lie au-dessus d'un ouvert analytique de $\mathbb{C}^2$ aboutit \`a $64$ sous-groupo\"ides dont un r\'ecapitulatif est donn\'e \`a la page $193$ de \cite{Cartan}. Parmi ces sous-groupo\"ides, $31$ sont de dimension fini, \textit{i.e.} de type diff\'erentiel $0$, $26$ d\'ependent de fonctions d'une seule variable, \textit{i.e.} de type diff\'erentiel $1$, $7$ d\'ependent de fonctions de deux variables, \textit{i.e.} de type diff\'erentiel $2$. Ces $7$ derniers sous-groupo\"ides sont d\'enomm\'es $g,g_1, g_2, g_3, g_{01}, g_{22}, g_{29}$. Le sous-groupo\"ide $g$ est le groupo\"ide de toutes les applications. Le sous-groupo\"ide $g_1$ est celui form\'e des applications dont le d\'eterminant jacobien vaut $1$ et aucun des $5$ autres sous-groupo\"ides n'est inclus dans $g_1$. 

Notons $pr_{uv}$ la projection de l'ouvert $U$ sur le plan $\mathbb C^2$ de coordonn\'ees $u$ et $v$. En reprenant l'application ${pr_{uv}}_*:Aut(\mathcal{F}_{pr_{uv}})\rightarrow Aut(pr_{uv}(U))$ de la sous-section (\ref{secprojection}), nous avons par hypoth\`ese $\tdiff({pr_{uv}}_*Gal(E(q_0))\vert_{U\times U})=\tdiff(Gal(E(q_0))\vert_{U\times U})=2$. La classification d\'ecrite ci-dessus donne ${pr_{uv}}_*Gal(E(q_0))\vert_{U\times U}=g_1$. Les fonctions $f_1$ et $f_2$ de l'inclusion \eqref{inclusion} ne v\'erifient pas d'autre \'equation aux d\'eriv\'ees partielles que $\frac{\partial(f_1,f_2)}{\partial (u,v)}=1$. Autrement dit: $Gal(E(q_0))\vert_{U\times U}= Vol(E(q_0))\vert_{U\times U}$.

Notons $Vol_k(E(q_0))$ la projection de $Vol(E(q_0))$ sur l'espace des jets d'ordre $k\in\mathbb{N}$. Par ce qui pr\'ec\`ede, les sous-vari\'et\'es $Gal_k(E(q_0))$ et $Vol_k(E(q_0))$ ont m\^{e}me dimension. Par irr\'eductibilit\'e de cette derni\`ere, l'inclusion $Gal_k(E(q_0))\subset Vol_k(E(q_0))$ donn\'ee en d\'ebut de preuve est une \'egalit\'e : $Gal(E(q_0))=Vol(E(q_0))$.
\end{proof}

\begin{corollary}\label{spvol}
Si le champ de vecteurs $X_F$ associ\'e \`a l'\'equation \eqref{E} pr\'eserve une $3$-forme $fdx\wedge du\wedge dv$ et s'il existe $q_0\in S$ tel que $\tdiff(Gal(X_F\vert_{q_0}))=2$, alors pour $q\in S$ g\'en\'eral, $Gal(E(q))=Vol(E(q))$.
\end{corollary}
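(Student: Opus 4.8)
Le plan est de d\'eduire ce corollaire du th\'eor\`eme \ref{corvolume} appliqu\'e \`a chaque fibre g\'en\'erale, le point crucial \'etant d'\'etablir au pr\'ealable que $\tdiff(Gal(E(q)))=2$ pour $q$ g\'en\'eral.

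Pour cela, je commencerais par la majoration. D'apr\`es le th\'eor\`eme de sp\'ecialisation \ref{thmsp\'ec}(2), pour $q\in S$ g\'en\'eral et pour tout $k$ on a l'\'egalit\'e $\dim_{\mathbb{C}}(Gal_k(X_F\vert_q))=\dim_S(Gal_k(X_F/S))=\dim(Gal_k(X_F/S))-\dim(S)$. Le polyn\^ome de dimension diff\'erentielle de $Gal(X_F\vert_q)$ se d\'eduit donc de celui de $Gal(X_F/S)$ par translation d'une constante, et a par cons\'equent m\^eme degr\'e : $\tdiff(Gal(X_F\vert_q))=\tdiff(Gal(X_F/S))$. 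Le lemme \ref{lemquad2} fournit alors la borne $\tdiff(Gal(X_F\vert_q))\leq 2$. Pour la minoration, j'utiliserais l'hypoth\`ese : puisqu'il existe $q_0$ avec $\tdiff(Gal(X_F\vert_{q_0}))=2$, le th\'eor\`eme \ref{thmcroissance} donne $\tdiff(Gal(X_F\vert_q))\geq\tdiff(Gal(X_F\vert_{q_0}))=2$ pour $q$ g\'en\'eral. On obtient ainsi $\tdiff(Gal(E(q)))=2$.

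Il resterait \`a transporter sur les fibres la pr\'eservation du volume. Comme $X_F$ est tangent aux fibres de $\rho$, son flot $\exp(\tau X_F)$ stabilise chaque fibre $M_q\cong\mathbb{C}^3$ et s'y restreint en $\exp(\tau X_F\vert_q)$. En notant $i_q:M_q\hookrightarrow M$ l'inclusion et $\omega=f\,dx\wedge du\wedge dv$, la relation $\exp(\tau X_F)\circ i_q=i_q\circ\exp(\tau X_F\vert_q)$ jointe \`a l'hypoth\`ese $(\exp(\tau X_F))^*\omega=\omega$ donne $(\exp(\tau X_F\vert_q))^*(i_q^*\omega)=i_q^*\omega$, c'est-\`a-dire $\mathcal{L}_{X_F\vert_q}(i_q^*\omega)=0$ avec $i_q^*\omega=f(\cdot,q)\,dx\wedge du\wedge dv$. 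En restreignant $q$ \`a l'intersection, encore g\'en\'erale, avec l'ouvert o\`u $dvol:=f(\cdot,q)\,dx\wedge du\wedge dv$ et $X_F\vert_q$ sont bien d\'efinis, les deux hypoth\`eses du th\'eor\`eme \ref{corvolume} sont v\'erifi\'ees, d'o\`u $Gal(E(q))=Vol(E(q))$.

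La principale difficult\'e sera l'\'etape d'identification des types diff\'erentiels : il faut lire pr\'ecis\'ement l'\'enonc\'e \ref{thmsp\'ec}(2) pour voir que l'\'egalit\'e des dimensions fibre par fibre, valable \`a la constante $\dim(S)$ pr\`es, se traduit bien par une \'egalit\'e des degr\'es des polyn\^omes de dimension, et donc des types diff\'erentiels ; le passage de $\mathcal{L}_{X_F}\omega=0$ \`a sa version fibr\'ee est \'el\'ementaire mais demande d'\^etre explicit\'e, tout comme la v\'erification que l'intersection des diverses conditions g\'en\'erales reste g\'en\'erale.
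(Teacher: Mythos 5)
Votre d\'emonstration est correcte et suit essentiellement la m\^eme voie que celle de l'article : \'etablir $\tdiff(Gal(E(q)))=2$ pour $q$ g\'en\'eral (la minoration venant du th\'eor\`eme \ref{thmcroissance} et de l'hypoth\`ese en $q_0$, la majoration du lemme \ref{lemquad2} combin\'e \`a la sp\'ecialisation), puis appliquer le th\'eor\`eme \ref{corvolume}. La preuve de l'article est exactement ce m\^eme argument en deux phrases ; vous ne faites qu'expliciter deux points laiss\'es implicites, \`a savoir la majoration du type diff\'erentiel par $2$ et la restriction aux fibres de la $3$-forme invariante, ce qui est l\'egitime et ne change pas la d\'emarche.
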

\begin{proof}
Par le th\'eor\`eme (\ref{thmcroissance}), pour $q\in S$ g\'en\'eral, $\tdiff(Gal(E(q)))=2$. Par le th\'eor\`eme (\ref{corvolume}), pour ces valeurs des param\`etres, $Gal(E(q))=Vol(E(q))$.
\end{proof}
\subsection{Irr\'eductibilit\'e d'une \'equation}
Nous allons utiliser les r\'esultats pr\'ec\'edents pour obtenir des r\'esultats d'irr\'eductibilit\'e d'une \'equation diff\'erentielle. 

\begin{definition} 
(R\'eductibilit\'e au sens de Nishioka-Umemura)
Soient $q_0\in S$ et $f$ une solution de \eqref{Eq} appartenant \`a une extension diff\'erentielle du corps $(\mathbb{C}(x),\frac{d}{dx})$. Cette solution est r\'eductible si elle appartient \`a une extension diff\'erentielle $K_N$ de $\mathbb{C}(x)$ telle qu'il existe une suite d'extensions $(\mathbb{C}(x),\frac{d}{dx})\subset (K_1,\delta_1)\subset \ldots\subset (K_N,\delta_N)$ v\'erifiant: 
\begin{description}
\item [soit] $K_i$ est alg\'ebrique sur $K_{i-1}$
\item [soit] $K_i=K_{i-1}(g_{pq})$ o\`{u} $(g_{pq})_{pq}$ est une matrice fondamentale de solutions d'un syst\`eme diff\'erentiel lin\'eaire \`a coefficients dans $K_{i-1}$. 
\item [soit] $K_i=K_{i-1}(g)$ o\`{u} $g$ est une solution d'une \'equation diff\'erentielle d'ordre $1$ sur $K_{i-1}$. 
\item [soit] $K_i=K_{i-1}(\varphi(\tau(a_1,\ldots,a_n))\ \vert\ \varphi\in\mathbb{C}(A))$ o\`{u} 
\begin{itemize}
\item $A$ est une vari\'et\'e ab\'elienne
\item $\tau:\mathbb{C}^n\rightarrow A$ est son rev\^etement universel
\item $a_1,\ldots,a_n\in K_{i-1}$
\end{itemize}
\end{description}
\end{definition}

Remarquons que l'extension faisant appel \`a un syst\`eme diff\'erentiel lin\'eaire n'est pas suppos\'ee de Picard-Vessiot, il pourra \^{e}tre ajout\'ees de nouvelles constantes. Remarquons aussi que dans l'extension du troisi\`eme type, nous nous autorisons \`a r\'esoudre n'importe quelle \'equation diff\'erentielle ordinaire non-lin\'eaire d'ordre $1$.

Une \'equation peut avoir une solution r\'eductible sans que nous n'ayons aucune information sur les autres solutions:
\begin{example}
Pour $(P,Q)\in \mathbb{C}[x,u,v]^2$, l'\'equation $(E):u''=P(x,u,u')u+Q(x,u,u')u'$ admet $0$ pour solution r\'eductible.
\end{example}

Il nous faut alors une hypoth\`ese de g\'en\'ericit\'e sur les solutions r\'eductibles :
\begin{definition}
Soit $q_0\in S$. Une solution $f$ de \eqref{Eq} est appel\'ee solution g\'en\'erale si 
\[
\trdeg_{\mathbb{C}(x)}(\mathbb{C}(x,f,f'))=2
\]
 Si l'\'equation \eqref{Eq} poss\`ede une solution r\'eductible g\'en\'erale, alors \eqref{Eq} est dite r\'eductible. Sinon elle est dite irr\'eductible.
\end{definition}
\begin{example}
L'\'equation $(E) :u''=0$ est r\'eductible puisqu'elle admet $f=ax+b$ comme solution r\'eductible g\'en\'erale o\`{u} $a$ et $b$ sont des constantes transcendantes sur $\mathbb{C}$. Remarquons que l'extension $\mathbb{C}(x)\subset \mathbb{C}(x,a,b)$ n'est pas de Picard Vessiot.
\end{example}
Le lien entre la r\'eductibilit\'e d'une \'equation et la structure du groupo\"ide de Galois du champ de vecteurs associ\'e est donn\'e par G.Casale dans \cite{CasaleP1}.

\begin{theorem}\label{croissancered}
Soit $q_0\in S$. Si $Gal(E(q_0))=Vol(E(q_0)$ alors \eqref{Eq} est irr\'eductible.
\end{theorem}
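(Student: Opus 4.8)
The plan is to reduce the statement to the irreducibility criterion recalled in the introduction and established in \cite{Casaleirred}: if the field of rational differential invariants of $X_F\vert_{q_0}$ is generated by the invariants coming from the time form $dx$, an invariant volume form $dvol$, and the vector field $X_F\vert_{q_0}$ itself, then the specialized equation is irreducible in the sense of Nishioka-Umemura. The whole difficulty is therefore to convert the geometric hypothesis $Gal(E(q_0))=Vol(E(q_0))$ into this field-theoretic condition.

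First I would make this dictionary precise, writing $X=X_F\vert_{q_0}$ and letting $dvol$ be the invariant volume form used to define $Vol(E(q_0))$, so that $\mathcal{L}_X dvol=0$. By Definition \ref{maldef} and the description of the Galois groupoid that follows it, $Gal(E(q_0))=Gal(X)$ is exactly the groupoid of automorphisms preserving every rational first integral $H\in\mathbb{C}(R_k(\mathbb{C}^3))^{R_kX}$ of every prolongation $R_kX$. Examples \ref{exinvvect} and \ref{exinvforme} identify the first integrals attached to the invariant objects $dx$, $X$ and $dvol$: an automorphism preserves one of these objects if and only if it preserves the associated functions $H_i\in\mathbb{C}(R_1(\mathbb{C}^3))$. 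Since $\mathcal{L}_X dx=0$, $\mathcal{L}_X X=0$ and $\mathcal{L}_X dvol=0$, these functions are genuine first integrals of the prolongation, so one always has $Gal(E(q_0))\subset Vol(E(q_0))$. The equality $Gal(E(q_0))=Vol(E(q_0))$ thus means precisely that preserving the finitely many first integrals produced by $dx$, $X$ and $dvol$ already forces preservation of all rational first integrals; equivalently, the invariant field $\mathbb{C}(R_k(\mathbb{C}^3))^{R_kX}$ is generated over $\mathbb{C}(\mathbb{C}^3)$ by the invariants coming from $dx$, $dvol$ and $X$.

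This last statement is exactly the hypothesis of the theorem of \cite{Casaleirred}, whose conclusion is the irreducibility of $E(q_0)$; applying it finishes the proof. The main obstacle is the equivalence established in the second paragraph: one must verify that the equality of the two subvarieties $Gal(E(q_0))$ and $Vol(E(q_0))$ of $Aut_k(\mathbb{C}^3)$ is faithfully reflected by the equality of their invariant fields --- a Galois-type correspondence resting on the fact that $Vol(E(q_0))$ is cut out exactly by the differential invariants of $dx$, $X$ and $dvol$, together with the involutivity of the Galois groupoid discussed just before Theorem \ref{corvolume}. Once this correspondence is in place, the remaining implication is a direct citation of \cite{Casaleirred}.
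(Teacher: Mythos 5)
Be aware first of what the paper actually does here: it gives no proof at all. The sentence immediately preceding the statement declares that this link between the structure of the Galois groupoid and irreducibility is established by the first author in \cite{CasaleP1}, and the theorem is imported as an external result. Your proposal is therefore attempting strictly more than the paper: you start from the criterion as quoted in the introduction from \cite{Casaleirred} (the field of rational differential invariants of $X_F\vert_{q_0}$ is generated by the invariants of $dx$, an invariant $dvol$ and $X_F\vert_{q_0}$ itself) and try to rebuild the bridge to the hypothesis $Gal(E(q_0))=Vol(E(q_0))$. That is a reasonable plan, but the bridge is precisely where your text stops being a proof.

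The implication you need is ``$Gal(E(q_0))=Vol(E(q_0))$ implies the differential invariant field is generated by the invariants of $dx$, $X$ and $dvol$'', and this is the hard direction of the groupoid/field correspondence; you assert it and explicitly defer it. Two points would have to be established. First, that $Vol_k(E(q_0))$ really coincides with $\Phi_k\left(\overline{R_k\times_{N'}R_k}\right)$, where $N'$ is the quotient of $R_k(M)$ by the differential subfield generated by the order-one functions $H_i$ of Examples \ref{exinvvect} and \ref{exinvforme} and their derivatives; this identification of the closure of a jet condition with the prolonged invariant conditions is where the involutivity you invoke genuinely intervenes, and it is not automatic. Second, that a strict inclusion of that subfield in $\mathbb{C}(R_k(M))^{R_kX}$ forces a strict inclusion $\mathcal{R}_k^c(X)\subsetneq \overline{R_k\times_{N'}R_k}$: the map $N\rightarrow N'$ ($N$ being a maximal first integral as in Definition \ref{defRc}) fails to be birational, so a generic fibre of $R_k\rightarrow N'$ is a union of at least two fibres of $R_k\rightarrow N$, and the two fibre products differ. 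Neither verification is routine, and without them the equality of the two groupoids does not yet deliver the field-theoretic hypothesis of \cite{Casaleirred}. As written, your argument replaces one citation by another citation plus an unproven equivalence; either prove that equivalence along the lines above, or cite the result of \cite{CasaleP1} in the form in which the paper uses it.
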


\begin{corollary}\label{speirred}
Si le champ de vecteurs $X_F$ associ\'e \`a l'\'equation \eqref{E} pr\'eserve une $3$-forme et s'il existe $q_0\in S$ tel que $\tdiff(Gal(E(q_0)))=2$ alors pour $q\in S$ g\'en\'eral, l'\'equation $(E_q)$ est irr\'eductible.
\end{corollary}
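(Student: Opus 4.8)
Le plan est de cha\^iner directement les deux r\'esultats qui pr\'ec\`edent, sans aucun calcul suppl\'ementaire. Je commence par observer que les hypoth\`eses de l'\'enonc\'e sont exactement celles du corollaire \ref{spvol}: le champ $X_F$ pr\'eserve une $3$-forme, que l'on \'ecrit localement $fdx\wedge du\wedge dv$, et il existe $q_0\in S$ tel que $\tdiff(Gal(E(q_0)))=2$. Comme rappel\'e en d\'ebut de section, le groupo\"ide de Galois de l'\'equation sp\'ecialis\'ee co\"incide avec celui du champ restreint, $Gal(E(q_0))=Gal(X_F\vert_{q_0})$, de sorte que la condition $\tdiff(Gal(X_F\vert_{q_0}))=2$ requise par \ref{spvol} est bien satisfaite.

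J'applique alors le corollaire \ref{spvol}: il fournit un ensemble g\'en\'eral de param\`etres, c'est-\`a-dire le compl\'ementaire dans $S$ d'une union d\'enombrable de sous-vari\'et\'es ferm\'ees strictes, sur lequel on a l'\'egalit\'e $Gal(E(q))=Vol(E(q))$. Pour chaque $q$ de cet ensemble, le th\'eor\`eme \ref{croissancered}, appliqu\'e \`a la valeur $q$ du param\`etre, donne l'irr\'eductibilit\'e (au sens de Nishioka-Umemura) de l'\'equation $(E_q)$.

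La seule v\'erification d\'elicate, et c'est l\`a le point sur lequel je porterais mon attention, est que la notion de ``g\'en\'eral'' se propage correctement: le th\'eor\`eme \ref{croissancered} est un crit\`ere \emph{point par point}, appliqu\'e ici s\'epar\'ement \`a chaque valeur $q$ de l'ensemble g\'en\'eral d\'ej\`a produit par \ref{spvol}. Il ne r\'eduit donc pas cet ensemble, et l'irr\'eductibilit\'e vaut sur le m\^eme ensemble g\'en\'eral. Aucun obstacle s\'erieux n'appara\^it: la preuve est une composition imm\'ediate de \ref{spvol} et de \ref{croissancered}.
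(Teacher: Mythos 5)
Votre preuve est correcte et suit exactement la même démarche que celle de l'article : application du corollaire \ref{spvol} pour obtenir $Gal(E(q))=Vol(E(q))$ pour $q$ général, puis du théorème \ref{croissancered} point par point sur cet ensemble général. Vos remarques supplémentaires (identification $Gal(E(q_0))=Gal(X_F\vert_{q_0})$ et conservation de l'ensemble général) sont justes mais ne font qu'expliciter ce que l'article laisse implicite.
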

\begin{proof}
Par le corollaire \ref{spvol}, pour $q\in S$ g\'en\'eral, $Gal(E(q))=Vol(E(q))$. Par le th\'eor\`eme \ref{croissancered}, pour ces valeurs des param\`etres, $(E_q)$ est irr\'eductible.
\end{proof}
\begin{remark}
Dans le corollaire (\ref{speirred}), l'hypoth\`ese sur le type diff\'erentiel du groupo\"ide de Galois de $(E(q_0))$ ne peut pas \^{e}tre remplac\'ee par l'hypoth\`ese d'irr\'eductibilit\'e de $(E(q_0))$. En effet, l'\'equation $P_{{\textsc{\romannumeral 6}}}(0,0,0,1/2)$ donn\'ee ci-dessous est irr\'eductible mais le type diff\'erentiel de son groupo\"ide de Galois est $0$ (voir \cite{Casale6, Watanabe6}.
\end{remark}





\subsection{Les \'equations de Painlev\'e}\label{secpainlev\'e}
La classification des solutions r\'eductibles des \'equations de Painlev\'e est connue (voir \cite{Lisovyy, Murata, Nishioka, Noumi, Umemura1, Umemura24, Umemura3, Watanabe5, Watanabe6}). Les th\'eor\`emes pr\'ec\'edents vont nous permettre de montrer l'irr\'eductibilit\'e des \'equations pour des valeurs g\'en\'erales des param\`etres. Le r\'esultat que nous retrouvons est un peu plus faible que le r\'esultat connu mais la preuve n'est pas sp\'ecialis\'ee aux \'equations de Painlev\'e.
Voici la liste de ces \'equations qui peut \^{e}tre trouv\'ee dans \cite{Ohyama}:
\begin{align}
u''&=6u^2+x \tag{$P_{{\textsc{\romannumeral 1}}}$}\\
u''&=2u^3+xu+\alpha \tag{$P_{{\textsc{\romannumeral 2}}}(\alpha)$}\\
u''&=\frac{u'^2}{u}-\frac{u'}{x}+\frac{\alpha u^2+\beta}{x}+\gamma u^3+\frac{\delta}{u} \tag{$P_{{\textsc{\romannumeral 3}}}(\alpha,\beta,\gamma,\delta)$}\\
u''&=\frac{u'^2}{2u}+\frac{3}{2}u^3+4xu^2+2(t^2-\alpha)u+\frac{\beta}{u} \tag{$P_{{\textsc{\romannumeral 4}}}(\alpha,\beta)$}\\
u''&=\left(\frac{1}{2u}+\frac{1}{u-1}\right)u'^2-\frac{u'}{x}+\frac{(u-1)^2}{x^2}\left(\alpha u+\frac{\beta}{u}\right)+\gamma \frac{u}{x}+\delta\frac{u(u+1)}{u-1} \tag{$P_{{\textsc{\romannumeral 5}}}(\alpha,\beta,\gamma,\delta)$}\\
u''&=\frac{1}{2}\left(\frac{1}{u}+\frac{1}{u-1}+\frac{1}{u-x}\right)u'^2-\left(\frac{1}{x}+\frac{1}{x-1}+\frac{1}{u-x}\right)u'
\nonumber\\&\hfill +\frac{u(u-1)(u-x)}{x^2(x-1)^2}\left(\alpha+\beta\frac{x}{u^2}+\gamma\frac{x-1}{(u-1)^2}+\delta\frac{x(x-1)}{(u-x)^2}\right) \tag{$P_{{\textsc{\romannumeral 6}}}(\alpha,\beta,\gamma,\delta)$}
\end{align}
Pour $J=\textsc{\romannumeral 1},\ldots,\textsc{\romannumeral 6}$, l'espace des param\`etres $S_J$ est $\mathbb{C}^d$ o\`{u} $d=0,1,2,4$ d\'epend de l'\'equation consid\'er\'ee.

G. Casale dans (\cite{CasaleP1}), G. Casale et J. A. Weil dans (\cite{CasaleWeil}), F. Loray et S. Cantat dans (\cite{Cantat}) ont respectivement calcul\'e le groupo\"ide de Galois des \'equation $P_{{\textsc{\romannumeral 1}}},\ P_{{\textsc{\romannumeral 2}}}(0)$ et $P_{{\textsc{\romannumeral 6}}}(\alpha,\beta,\gamma,\delta)$: 
\begin{theorem}\label{casalespe}
Nous avons les \'egalit\'es de groupo\"ides suivantes:
\begin{itemize}
\item $Gal(P_{{\textsc{\romannumeral 1}}})=Vol(P_{{\textsc{\romannumeral 1}}})$ 
\item $Gal(P_{{\textsc{\romannumeral 2}}}(0))=Vol(P_{{\textsc{\romannumeral 2}}}(0))$ 
\item pour $(\alpha,\beta,\gamma,\delta)\in\mathbb{C}^4$ g\'en\'erique, $Gal(P_{{\textsc{\romannumeral 6}}}(\alpha,\beta,\gamma,\delta))=Vol(P_{{\textsc{\romannumeral 6}}}(\alpha,\beta,\gamma,\delta))$
\end{itemize}
\end{theorem}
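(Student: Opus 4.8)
Ces trois égalités sont exactement les calculs menés respectivement dans \cite{CasaleP1}, \cite{CasaleWeil} et \cite{Cantat}. Le plan est de montrer qu'elles relèvent toutes du théorème \ref{corvolume} : dans chacun des trois cas, il suffit d'exhiber une $3$-forme rationnelle invariante par le champ associé et d'établir que le type différentiel de son groupoïde de Galois vaut $2$.

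Je commencerais par l'invariance d'une forme volume. Pour $P_{{\textsc{\romannumeral 1}}}$ et $P_{{\textsc{\romannumeral 2}}}(0)$, le second membre $F$ ne dépend pas de $v$, donc la divergence $\partial F/\partial v$ du champ $X_F=\partial_x+v\partial_u+F\partial_v$ est nulle et la forme standard $dvol=dx\wedge du\wedge dv$ vérifie $\mathcal{L}_{X_F}dvol=0$. Pour $P_{{\textsc{\romannumeral 6}}}$, la divergence ne s'annule plus, mais la structure hamiltonienne de l'équation fournit une fonction rationnelle $f$ solution de $X_F(\log f)=-\partial F/\partial v$, et la $3$-forme $f\,dx\wedge du\wedge dv$ est alors invariante.

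Le c{\oe}ur de la preuve, et sa principale difficulté, est l'égalité $\tdiff(Gal(E))=2$. Le lemme \ref{lemquad2} fournit déjà $\tdiff\leq 2$ ; il reste à exclure que le groupoïde de Galois soit strictement contenu dans le groupoïde des transformations préservant le volume, c'est-à-dire qu'aucun invariant différentiel supplémentaire n'apparaisse. Pour $P_{{\textsc{\romannumeral 1}}}$ et $P_{{\textsc{\romannumeral 2}}}(0)$, ceci s'obtient en projetant par ${pr_{uv}}_*$ sur le plan $(u,v)$ et en confrontant le groupoïde à la classification des pseudo-groupes du plan d'E.~Cartan \cite{Cartan}, afin d'écarter tous les sous-groupoïdes de type différentiel $2$ autres que $g_1$, comme dans \cite{CasaleP1, CasaleWeil}. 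Pour $P_{{\textsc{\romannumeral 6}}}$, l'argument de \cite{Cantat} est de nature plus géométrique (monodromie non linéaire et absence d'invariant rationnel supplémentaire) ; il n'est valide que pour $(\alpha,\beta,\gamma,\delta)$ générique, d'où la restriction dans l'énoncé.

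Une fois réunies les deux hypothèses $\mathcal{L}_{X_F}dvol=0$ et $\tdiff(Gal(E))=2$ dans chacun des trois cas, le théorème \ref{corvolume} donne immédiatement l'égalité $Gal(E)=Vol(E)$ recherchée.
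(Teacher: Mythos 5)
Sur la forme, votre premier geste co\"incide avec ce que fait l'article : le th\'eor\`eme \ref{casalespe} y est \'enonc\'e sans d\'emonstration, comme un import direct des calculs de \cite{CasaleP1}, \cite{CasaleWeil} et \cite{Cantat}. Votre v\'erification de l'invariance d'une forme volume (divergence nulle pour $P_{{\textsc{\romannumeral 1}}}$ et $P_{{\textsc{\romannumeral 2}}}(0)$, facteur int\'egrant issu de la structure hamiltonienne pour $P_{{\textsc{\romannumeral 6}}}$) est correcte et correspond \`a ce que l'article utilise d'ailleurs plus loin dans la preuve du th\'eor\`eme \ref{corirred}.

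En revanche, votre esquisse du point crucial, \`a savoir l'\'egalit\'e $\tdiff(Gal(E))=2$, est circulaire telle qu'elle est \'ecrite. L'\'elimination, via la classification de Cartan, des sous-groupo\"ides de type diff\'erentiel $2$ autres que $g_1$ est exactement le contenu du th\'eor\`eme \ref{corvolume} : elle \emph{pr\'esuppose} $\tdiff(Gal(E))=2$ et en d\'eduit $Gal(E)=Vol(E)$. Elle ne peut donc pas servir \`a \'etablir $\tdiff=2$. Ce qu'il faut exclure pour obtenir cette \'egalit\'e, ce sont les $31+26$ sous-pseudo-groupes de type diff\'erentiel $0$ ou $1$ de la classification, c'est-\`a-dire l'existence d'invariants diff\'erentiels rationnels suppl\'ementaires (int\'egrales premi\`eres, feuilletages, structures g\'eom\'etriques invariantes). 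C'est l\`a le c{\oe}ur des r\'ef\'erences cit\'ees et cela repose sur des arguments sp\'ecifiques \`a chaque \'equation : pour $P_{{\textsc{\romannumeral 1}}}$, l'absence de telles structures est \'etablie dans \cite{CasaleP1} en s'appuyant notamment sur les r\'esultats d'irr\'eductibilit\'e \`a la Nishioka--Umemura ; pour $P_{{\textsc{\romannumeral 2}}}(0)$, \cite{CasaleWeil} passe par la th\'eorie de Galois diff\'erentielle lin\'eaire des \'equations variationnelles le long d'une solution particuli\`ere ; pour $P_{{\textsc{\romannumeral 6}}}$, \cite{Cantat} exploite la dynamique de la monodromie non lin\'eaire. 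Votre phrase affirmant qu'il reste \`a exclure que le groupo\"ide de Galois soit strictement contenu dans le groupo\"ide pr\'eservant le volume confond par ailleurs la conclusion $Gal=Vol$ avec l'hypoth\`ese $\tdiff=2$ qui permet de l'obtenir. Comme l'article ne red\'emontre rien ici, le plus s\^ur est soit de vous en tenir \`a la citation des trois r\'ef\'erences, soit d'indiquer pr\'ecis\'ement quel ingr\'edient de chacune fournit la minoration du type diff\'erentiel.
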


\begin{corollary}\label{2emepainleve}
Pour $\alpha\in\mathbb{C}$ g\'en\'eral, $Gal(P_{{\textsc{\romannumeral 2}}}(\alpha))=Vol(P_{{\textsc{\romannumeral 2}}}(\alpha))$ et $P_{{\textsc{\romannumeral 2}}}(\alpha)$ est irr\'eductible.
\end{corollary}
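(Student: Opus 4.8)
Le plan est de reconna\^{i}tre que la famille $P_{{\textsc{\romannumeral 2}}}(\alpha)$, vue comme le champ de vecteurs $X_{{\textsc{\romannumeral 2}}}$ sur $\mathbb{C}^3\times\mathbb{C}$ tangent aux fibres de la projection sur le param\`etre $\alpha$, rel\`eve exactement des corollaires \ref{spvol} et \ref{speirred}. Il suffit donc d'en v\'erifier les deux hypoth\`eses communes : l'existence d'une $3$-forme invariante, et l'existence d'une valeur du param\`etre o\`u le type diff\'erentiel du groupo\"ide de Galois vaut $2$. Tout le reste de la preuve n'est qu'un assemblage de r\'esultats d\'ej\`a \'etablis.

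D'abord, je v\'erifierais que le champ pr\'eserve une $3$-forme. Je prendrais $dvol=dx\wedge du\wedge dv$ et je calculerais la divergence du champ associ\'e :
\[
\mathcal{L}_{X_{{\textsc{\romannumeral 2}}}\vert_\alpha}dvol=\left(\frac{\partial}{\partial x}(1)+\frac{\partial}{\partial u}(v)+\frac{\partial}{\partial v}(2u^3+xu+\alpha)\right)dvol=0,
\]
et ceci pour toute valeur de $\alpha$. La premi\`ere hypoth\`ese des deux corollaires est donc satisfaite.

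Ensuite, je prendrais comme valeur sp\'eciale $q_0=0$. Le th\'eor\`eme \ref{casalespe} (issu de \cite{CasaleWeil}) donne l'\'egalit\'e $Gal(P_{{\textsc{\romannumeral 2}}}(0))=Vol(P_{{\textsc{\romannumeral 2}}}(0))$. Il reste \`a en d\'eduire que $\tdiff(Gal(P_{{\textsc{\romannumeral 2}}}(0)))=2$ : le groupo\"ide $Vol$ se projette par ${pr_{uv}}_*$ (sous-section \ref{secprojection}) sur le pseudo-groupe $g_1$ des transformations du plan de jacobien $1$, qui est de type diff\'erentiel $2$ dans la classification de Cartan, le facteur suppl\'ementaire $x\mapsto x+c$ \'etant de dimension finie et n'affectant pas le type diff\'erentiel. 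La seconde hypoth\`ese est donc v\'erifi\'ee avec $q_0=0$.

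Les deux hypoth\`eses \'etant r\'eunies, le corollaire \ref{spvol} donne, pour $\alpha\in\mathbb{C}$ g\'en\'eral, l'\'egalit\'e $Gal(P_{{\textsc{\romannumeral 2}}}(\alpha))=Vol(P_{{\textsc{\romannumeral 2}}}(\alpha))$, et le corollaire \ref{speirred} donne l'irr\'eductibilit\'e de $P_{{\textsc{\romannumeral 2}}}(\alpha)$ pour $\alpha$ g\'en\'eral, ce qui est l'\'enonc\'e voulu. Comme on le voit, il n'y a pas ici d'obstacle technique profond : toute la difficult\'e a \'et\'e absorb\'ee en amont, d'une part par le th\'eor\`eme de sp\'ecialisation \ref{thmsp\'ec} et sa cons\'equence \ref{thmcroissance} (qui font cro\^{i}tre le type diff\'erentiel vers les valeurs g\'en\'eriques), d'autre part par le calcul d\'ej\`a connu du groupo\"ide en $\alpha=0$. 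Le seul point d\'elicat \`a surveiller est que la valeur choisie $q_0=0$ r\'ealise bien le type diff\'erentiel maximal $2$, et non une valeur d\'eg\'en\'er\'ee inf\'erieure — c'est pr\'ecis\'ement ce que garantit le th\'eor\`eme \ref{casalespe}, et c'est le point sur lequel repose l'ensemble de l'argument.
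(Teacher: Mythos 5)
Votre démonstration suit exactement le même chemin que celle de l'article : vérification que $X_{{\textsc{\romannumeral 2}}}$ préserve $dx\wedge du\wedge dv$, utilisation du théorème \ref{casalespe} en $\alpha=0$ pour obtenir $\tdiff(Gal(P_{{\textsc{\romannumeral 2}}}(0)))=2$, puis application du corollaire \ref{spvol} et du théorème \ref{croissancered} (via le corollaire \ref{speirred}, qui n'en est que l'assemblage). Votre justification explicite du fait que $Gal(P_{{\textsc{\romannumeral 2}}}(0))=Vol(P_{{\textsc{\romannumeral 2}}}(0))$ entraîne le type différentiel $2$ est un complément bienvenu que l'article laisse implicite, mais l'argument est pour l'essentiel identique.
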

\begin{proof}
Par le th\'eor\`eme (\ref{casalespe}), $\tdiff(Gal(P_{{\textsc{\romannumeral 2}}}(0)))=2$. Le champ de vecteurs associ\'e \`a l'\'equation $P_{{\textsc{\romannumeral 2}}}$ pr\'eserve la $3$-forme $dx\wedge du\wedge dv$. Par le corollaire (\ref{spvol}), pour $\alpha\in\mathbb{C}$ g\'en\'eral,  $Gal(P_{{\textsc{\romannumeral 2}}}(\alpha))=Vol(P_{{\textsc{\romannumeral 2}}}(\alpha))$. Par le th\'eor\`eme (\ref{croissancered}), pour ces valeurs du param\`etre, $P_{{\textsc{\romannumeral 2}}}(\alpha)$ est irr\'eductible.
\end{proof}

Pendant l'\'etude des \'equations de Painlev\'e, il a \'et\'e d\'ecouvert qu'elles d\'eg\'en\'eraient les unes sur les autres en suivant le diagramme suivant (voir \cite{Ohyama}):
\[
\xymatrix{
&&\mathbf{P_{{\textsc{\romannumeral 3}}}}\ar[dr]\\
\mathbf{P_{{\textsc{\romannumeral 6}}}}\ar@{->}[r] &\mathbf{P_{{\textsc{\romannumeral 5}}}}\ar@{->}[ur]\ar[dr]&&\mathbf{P_{{\textsc{\romannumeral 2}}}}\ar@{->}[r]&\mathbf{P_{{\textsc{\romannumeral 1}}}} \\
&&\mathbf{P_{{\textsc{\romannumeral 4}}}}\ar@{->}[ur]}
\]

Nous allons utiliser ces d\'eg\'en\'erescences pour obtenir les m\^{e}mes r\'esultats sur les groupo\"ides de Galois et sur l'irr\'eductibilit\'e des autres \'equations de Painlev\'e. 

\begin{proposition}\label{thmpainlev\'e}
Pour $J={\textsc{\romannumeral 1}},{\textsc{\romannumeral 2}},{\textsc{\romannumeral 3}},{\textsc{\romannumeral 4}},{\textsc{\romannumeral 5}},{\textsc{\romannumeral 6}}$ et $q\in S_J$ g\'en\'eral,
\[
\tdiff(Gal(P_{J}(q)))=2
\] 
\end{proposition}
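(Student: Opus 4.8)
Le plan est d'établir séparément les deux inégalités $\tdiff(Gal(P_J(q)))\leq 2$ et $\tdiff(Gal(P_J(q)))\geq 2$ pour $q$ général. La majoration est la plus simple : chaque $P_J(q)$ est une équation du second ordre, de sorte que le lemme \ref{lemquad2}, appliqué avec espace des paramètres réduit à un point, donne $\tdiff(Gal(P_J(q)))\leq 2$ pour \emph{toute} valeur de $q$. Pour la minoration, j'observe qu'il suffit, grâce au théorème de semi-continuité \ref{thmcroissance}, d'exhiber pour chaque $J$ une \emph{seule} valeur $q_\ast\in S_J$ telle que $\tdiff(Gal(P_J(q_\ast)))=2$ : en appliquant \ref{thmcroissance} avec $q_0=q_\ast$, on obtient $\tdiff(Gal(P_J(q)))\geq 2$ pour $q$ général, d'où l'égalité voulue.

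Les cas initiaux viennent du théorème \ref{casalespe}. Pour $P_{{\textsc{\romannumeral 1}}}$, pour $P_{{\textsc{\romannumeral 2}}}(0)$, et pour $P_{{\textsc{\romannumeral 6}}}$ à paramètres génériques, on dispose de l'égalité $Gal=Vol$ ; or $\tdiff(Vol)=2$, puisque la projection de $Vol$ sur le plan $(u,v)$ est le groupoïde $g_1$ des transformations de jacobien $1$ de la classification de Cartan (voir la preuve du théorème \ref{corvolume} et l'exemple \ref{extypediff}), qui est de type différentiel $2$. Ceci fournit le point $q_\ast$ cherché pour $J={\textsc{\romannumeral 1}},{\textsc{\romannumeral 2}},{\textsc{\romannumeral 6}}$.

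Pour les équations intermédiaires, je procède par récurrence le long du diagramme de dégénérescence, en remontant depuis $P_{{\textsc{\romannumeral 1}}}$. Une dégénérescence $P_J\to P_{J'}$ (où $P_{J'}$ est l'équation limite) se réalise comme un champ de vecteurs rationnel $X$ tangent aux fibres d'un morphisme lisse $\rho:M\to S$, la base $S$ contenant le paramètre de dégénérescence $\e$ ; la fibre spéciale $\e=0$ est $P_{J'}$ et les fibres générales sont conjuguées à $P_J$. On choisit les paramètres de $P_J$ de sorte que la valeur limite $q'_\ast\in S_{J'}$ du paramètre soit générique, donc telle que $\tdiff(Gal(P_{J'}(q'_\ast)))=2$ par l'hypothèse de récurrence. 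Le théorème \ref{thmcroissance} appliqué à cette famille, avec $q_0$ le point spécial, donne alors $2=\tdiff(Gal(P_{J'}(q'_\ast)))\leq \tdiff(Gal(X\vert_q))$ pour $q$ général ; le type différentiel étant invariant par conjugaison (théorème de projection \ref{thmproj} appliqué à un isomorphisme), $\tdiff(Gal(X\vert_q))=\tdiff(Gal(P_J(a_\ast)))$ pour le paramètre $a_\ast$ correspondant. On obtient ainsi un point $a_\ast$ de type différentiel $2$, ce qui, par la remarque du premier paragraphe, achève le cas $J$. En parcourant les flèches $P_{{\textsc{\romannumeral 2}}}\to P_{{\textsc{\romannumeral 1}}}$, $P_{{\textsc{\romannumeral 3}}}\to P_{{\textsc{\romannumeral 2}}}$, $P_{{\textsc{\romannumeral 4}}}\to P_{{\textsc{\romannumeral 2}}}$ puis $P_{{\textsc{\romannumeral 5}}}\to P_{{\textsc{\romannumeral 3}}}$, la récurrence couvre tous les cas.

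La principale difficulté réside dans la mise en famille des dégénérescences : les formules de \cite{Ohyama} font intervenir des changements de coordonnées et de paramètres qui explosent quand $\e\to 0$, et il faut les récrire pour obtenir un morphisme lisse $\rho:M\to S$ à fibres connexes et un champ $X$ rationnel tangent aux fibres, seul cadre où le théorème \ref{thmcroissance} s'applique. Il faut de plus s'assurer que l'application qui, à un paramètre de $P_J$, associe la valeur limite du paramètre de $P_{J'}$ est dominante, afin que le choix d'un $q'_\ast$ générique soit possible et que l'hypothèse de récurrence puisse être invoquée.
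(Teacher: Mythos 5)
Votre démonstration est correcte et suit pour l'essentiel la même route que celle de l'article : cas de base fournis par le théorème \ref{casalespe}, majoration par le lemme \ref{lemquad2}, puis récurrence le long du diagramme de dégénérescence en combinant deux applications du théorème de semi-continuité \ref{thmcroissance} (à la famille de dégénérescence, puis à la famille $P_J$ elle-même à partir du point $a_\ast$ obtenu) avec le théorème de projection \ref{thmproj} pour transporter le type différentiel à travers la conjugaison en $\e\neq 0$. La seule partie que vous différez --- récrire chaque dégénérescence de \cite{Ohyama} comme un champ rationnel tangent aux fibres d'un morphisme lisse, le champ renormalisé s'étendant à $\e=0$ en $X_{J'}$, et vérifier que l'application limite des paramètres atteint un point générique (elle est en fait l'identité dans chaque cas) --- est précisément ce qui occupe le gros de la preuve de l'article, via les formules explicites de $\phi$, $\tilde\phi$ et des facteurs de renormalisation.
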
 
\begin{proof}
Pour $J={\textsc{\romannumeral 1}},{\textsc{\romannumeral 2}}$, c'est le th\'eor\`eme (\ref{casalespe}) et le corollaire (\ref{2emepainleve}).
Nous d\'etaillons la preuve pour l'\'equation $P_{{\textsc{\romannumeral 3}}}$. Pour les autres \'equations, la m\'ethode reste la m\^{e}me. Les changements de variables qui permettent de faire d\'eg\'en\'erer les \'equations de Painlev\'e les unes sur les autres sont donn\'es dans \cite{Ohyama}. Le changement de variables qui permet de faire d\'eg\'en\'erer $P_{{\textsc{\romannumeral 3}}}$ sur $P_{{\textsc{\romannumeral 2}}}$ est:
\[
\begin{array}{cccc}
	\phi_{}:&\mathbb{C}^3\times \mathbb{C}\times\mathbb{C}^*&		\longrightarrow & \mathbb{C}^3\times\mathbb{C}^4\\
	&\left( t,f,g,a,\e\right)
	&\longmapsto&
	\displaystyle{\left(1+\e^2 t,1+2\e f,\frac{2g}{\e},\tilde{\phi}(\e,a)\right)}
\end{array}
\] o\`{u} 
\[
\tilde{\phi}(a,\e)= \left(
 -\frac{1}{2\e^{6}},
 \frac{2a}{\e^{3}}+\frac{1}{2\e^{6}},
 \frac{1}{4\e^{6}},
 -\frac{1}{4\e^{6}}\right)
\]
 Notons $X_{{\textsc{\romannumeral 2}}}$ le champ de vecteurs rationnel sur $\mathbb{C}^3\times\mathbb{C}\times\{0\}$ associ\'e \`a $P_{{\textsc{\romannumeral 2}}}$ et $X_{{\textsc{\romannumeral 3}}}$ le champ de vecteurs rationnel sur $\mathbb{C}^3\times \mathbb{C}^4$ associ\'e \`a $P_{{\textsc{\romannumeral 3}}}$. Autrement dit $$X_{{\textsc{\romannumeral 2}}}=\displaystyle{\frac{\partial}{\partial t}+ g\frac{\partial}{\partial f}+F_{{\textsc{\romannumeral 2}}}(t,f,g,a)\frac{\partial}{\partial g}}\quad \mbox{et}\quad X_{{\textsc{\romannumeral 3}}}=\displaystyle{\frac{\partial}{\partial x}+ v\frac{\partial}{\partial u}+F_{{\textsc{\romannumeral 3}}}(x,u,v,\alpha,\beta,\gamma,\delta)\frac{\partial}{\partial v}}$$ o\`{u} 
\begin{itemize}
\item $F_{{\textsc{\romannumeral 2}}}(t,f,g,a)= 2f^3+tf+a$
\item $\displaystyle{F_{{\textsc{\romannumeral 3}}}(x,u,v,\alpha,\beta,\gamma,\delta)=\frac{v^2}{u}-\frac{v}{x}+\frac{\alpha u^2+\beta}{x}+\gamma u^3+\frac{\delta}{u}}$
\end{itemize}
Le feuilletage donn\'e par les trajectoires de $X_{{\textsc{\romannumeral 3}}}$ est d\'ecrit par le syst\`eme de formes \[d\alpha,\ d\beta,\ d\gamma,\ d\delta,\ du-vdx,\ dv-F_{{\textsc{\romannumeral 3}}}dx\] Le tir\'e en arri\`ere par $\phi$ sur  $\mathbb{C}^3\times\mathbb{C}^*\times\mathbb{C}$ est d\'ecrit par le syst\`eme de formes \[d\e,\ da,\ df-gdt,\ dg-\frac{\e^3}{2}F_{{\textsc{\romannumeral 3}}}\circ\phi dt\]
\textit{i.e.} c'est le feuilletage donn\'e par le champ $Y_{{\textsc{\romannumeral 3}}}:=\e^2\phi^*X_{{\textsc{\romannumeral 3}}}$ qui correspond \`a l'\'equation du second ordre : $f''=\frac{\e^3}{2}F_{{\textsc{\romannumeral 3}}}(\phi(t,f,g,\e,a))$. La d\'eg\'en\'erescence de $P_{{\textsc{\romannumeral 3}}}$ sur $P_{{\textsc{\romannumeral 2}}}$ s'exprime de la mani\`ere suivante : $\frac{\e^3}{2} F_{{\textsc{\romannumeral 3}}}\circ\phi\vert_{\e=0}=F_{{\textsc{\romannumeral 2}}}$, autrement dit  $Y_{{\textsc{\romannumeral 3}}}\vert_{\e=0}=X_{{\textsc{\romannumeral 2}}}$.
La situation de d\'eg\'en\'erescence \'etant d\'ecrite nous pouvons appeler les th\'eor\`emes rencontr\'es dans la section pr\'ec\'edente.
%

\noindent Par le th\'eor\`eme (\ref{casalespe}), $\tdiff(Gal(Y_{{\textsc{\romannumeral 3}}}\vert_{(0,0)}))=2$. Le th\'eor\`eme (\ref{thmcroissance}) nous dit que pour $(a,\e)$ g\'en\'eral dans l'espace des param\`etres $\mathbb{C}^2$, $\tdiff(Gal(Y_{{\textsc{\romannumeral 3}}}\vert_{(a,\e)}))=2$. Choisissons $(a_0,\e_0)\in\mathbb{C}\times \mathbb{C}^*$ l'un de ces param\`etres pour lequel la deuxi\`eme coordonn\'ee $\e_0$ est non nulle. La restriction $\phi\vert_{(a_0,\e_0)}$ conjugue $Y_{{\textsc{\romannumeral 3}}}\vert_{(a_0\e_0)}$ \`a $\e_0^2X_{{\textsc{\romannumeral 3}}}\vert_{\tilde{\phi}(a_0,\e_0)}$. En appliquant le th\'eor\`eme de projection (\ref{thmproj}) \`a $\phi\vert_{(a_0,\e_0)}$ et \`a $\phi\vert_{(a_0,\e_0)}^{-1}$, il vient que le groupo\"ide de Galois du champ $Y_{{\textsc{\romannumeral 3}}}\vert_{(a_0,\e_0)}$ est isomorphe au groupo\"ide de Galois du champ $X_{{\textsc{\romannumeral 3}}}\vert_{\tilde{\phi}(a_0,\e_0)}$.
 Le th\'eor\`eme (\ref{thmcroissance}) nous dit que pour $(\alpha,\beta,\gamma,\delta)$ g\'en\'eral dans l'espace des param\`etres $\mathbb{C}^4$, $\tdiff(Gal(X_{{\textsc{\romannumeral 3}}}\vert_{(\alpha,\beta,\gamma,\delta)}))=2$. Ceci qui conclut la preuve pour $P_{{\textsc{\romannumeral 3}}}$.


Le changement de variables qui permet de faire d\'eg\'en\'erer $P_{{\textsc{\romannumeral 4}}}$ sur $P_{{\textsc{\romannumeral 2}}}$ est
\[
\begin{array}{cccc}
	\phi_{}:&\mathbb{C}^3\times \mathbb{C}\times\mathbb{C}^*&	\longrightarrow & \mathbb{C}^3\times\mathbb{C}^2\\
	&\left( t,f,g,a,\e\right)
	&\longmapsto&
	\displaystyle{\left(\frac{\e t}{2^{2/3}}-\frac{1}{\e^3},\frac{2^{2/3} f}{\e}+\frac{1}{\e^3},\frac{2^{4/3}g}{\e^2},\tilde{\phi}(a,\e)\right)}
\end{array}
\] o\`{u} 
\[
\tilde{\phi}(\e,a)= \left(
 -2a-\frac{1}{2\e^{6}} , -\frac{1}{2\e^{12}}  \right)
\]
Soient $X_{{\textsc{\romannumeral 4}}}$ le champs de vecteurs sur $\mathbb{C}^3\times \mathbb{C}^2$ associ\'e \`a $P_{{\textsc{\romannumeral 4}}}$ et $Y_{{\textsc{\romannumeral 4}}}:=\displaystyle{\frac{\e}{2^{2/3}}}\phi^{*}X_{{\textsc{\romannumeral 4}}}$ champ de vecteurs sur $\mathbb{C}^3\times\mathbb{C}\times\mathbb{C}$. La d\'eg\'en\'erescence de $P_{{\textsc{\romannumeral 4}}}$ sur $P_{{\textsc{\romannumeral 2}}}$ s'exprime de la mani\`ere suivante : $Y_{{\textsc{\romannumeral 4}}}\vert_{\e=0}=X_{{\textsc{\romannumeral 2}}}$ o\`{u} $X_{{\textsc{\romannumeral 2}}}$ est le champ de vecteurs donn\'e pr\'ec\'edemment. Nous avons $\tdiff(Gal(Y_{{\textsc{\romannumeral 4}}}\vert_{(0,0)}))=2$. En reprenant le m\^{e}me raisonnement il s'ensuit que pour des param\`etres $(\alpha,\beta)\in\mathbb{C}^2$ g\'en\'eraux, $\tdiff(Gal(X_{{\textsc{\romannumeral 4}}}\vert_{(\alpha,\beta)}))=2$.

Le changement de variables qui permet de faire d\'eg\'en\'erer $P_{{\textsc{\romannumeral 5}}}$ sur $P_{{\textsc{\romannumeral 4}}}$ est
\[
\begin{array}{cccc}
	\phi:&\mathbb{C}^3\times\mathbb{C}^2\times \mathbb{C}^*&\longrightarrow & \mathbb{C}^3\times\mathbb{C}^4\\
	&\left( t,f,g,a,b,\e\right)
	&\longmapsto&
	\displaystyle{\left(1+\e\sqrt{2}t,\frac{\e f}{\sqrt{2}},\frac{g}{2},\tilde{\phi}(a,b,\e)\right)}
\end{array}
\] o\`{u} 
\[
\tilde{\phi}(a,b,\e)= \left(
 \frac{1}{2\e^{4}},
 \frac{b}{4} ,
-\frac{1}{\e^4},
 \frac{a}{\e^2}-  \frac{1}{2\e^{4}}\right)
\]
Soient $X_{{\textsc{\romannumeral 5}}}$ le champ de vecteurs sur $\mathbb{C}^3\times\mathbb{C}^4$ associ\'e \`a $P_{{\textsc{\romannumeral 5}}}$ et $Y_{{\textsc{\romannumeral 5}}}:=\e\sqrt{2}\phi^{*}X_{{\textsc{\romannumeral 5}}}$ champ de vecteurs sur $\mathbb{C}^3\times\mathbb{C}^2\times\mathbb{C}$. La d\'eg\'en\'erescence de $P_{{\textsc{\romannumeral 5}}}$ sur $P_{{\textsc{\romannumeral 4}}}$ s'exprime de la mani\`ere suivante :  $Y_{{\textsc{\romannumeral 5}}}\vert_{\e=0}=X_{{\textsc{\romannumeral 4}}}$ o\`{u} le champ $X_{{\textsc{\romannumeral 4}}}$ est cette fois-ci d\'efini sur $\mathbb{C}^3\times\mathbb{C}^2\times\{0\}$. Nous savons maintenant que pour des param\`etres $(a,b)\in\mathbb{C}^2$ g\'en\'eraux, $\tdiff(Gal(Y_{{\textsc{\romannumeral 5}}}\vert_{(a,b,0)}))=2$. Choisissons en un. En reprenant le m\^{e}me raisonnement avec ce param\`etre il s'ensuit que pour des param\`etres $(\alpha,\beta,\gamma,\delta)\in\mathbb{C}^4$ g\'en\'eraux, $\tdiff(Gal(X_{{\textsc{\romannumeral 5}}}\vert_{(\alpha,\beta,\gamma,\delta)}))=2$.

Enfin, le changement de variables qui permet de faire d\'eg\'en\'erer $P_{{\textsc{\romannumeral 6}}}$ sur $P_{{\textsc{\romannumeral 5}}}$ est
\[
\begin{array}{cccc}
	\phi_{}:&\mathbb{C}^3\times\mathbb{C}^4\times \mathbb{C}^*&	\longrightarrow & \mathbb{C}^3\times\mathbb{C}^4\\
	&\left( t,f,g,a,b,c,d,\e\right)
	&\longmapsto&
	\displaystyle{\left(1+\e t, f,\frac{g}{\e},\tilde{\phi}(a,b,c,d,\e)\right)}
\end{array}
\] o\`{u} 
\[
\tilde{\phi}(a,b,c,d,\e)= \left(
a,
b ,
-\frac{d}{\e^2}+\frac{c}{\e},
 \frac{d}{\e^2} \right)
\]
Soient $X_{{\textsc{\romannumeral 6}}}$ le champs de vecteurs sur $\mathbb{C}^3\times\mathbb{C}^4$ associ\'e \`a $P_{{\textsc{\romannumeral 6}}}$ et $Y_{{\textsc{\romannumeral 6}}}:=\e \phi^{*}X_{{\textsc{\romannumeral 6}}}$ champ de vecteurs sur $\mathbb{C}^3\times\mathbb{C}^4\times\mathbb{C}$. La d\'eg\'en\'erescence de $P_{{\textsc{\romannumeral 6}}}$ sur $P_{{\textsc{\romannumeral 5}}}$ s'exprime de la mani\`ere suivante : $Y_{{\textsc{\romannumeral 6}}}\vert_{\e=0}=X_{{\textsc{\romannumeral 5}}}$ o\`{u} cette fois-ci le champ $X_{{\textsc{\romannumeral 5}}}$ est d\'efini sur $\mathbb{C}^3\times\mathbb{C}^4\times\{0\}$. Nous savons maintenant que pour des param\`etres $(a,b,c,d)\in\mathbb{C}^4$ g\'en\'eraux, $\tdiff(Gal(Y_{{\textsc{\romannumeral 5}}}\vert_{(a,b,c,d,0)}))=2$. Choisissons en un. En reprenant le m\^{e}me raisonnement il s'ensuit que pour des param\`etres $(\alpha,\beta,\gamma,\delta)\in\mathbb{C}^4$ g\'en\'eraux, $\tdiff(Gal(X_{{\textsc{\romannumeral 6}}}\vert_{(\alpha,\beta,\gamma,\delta)}))=2$.
\end{proof}

\begin{theorem}\label{corirred}
Pour $J={\textsc{\romannumeral 1}},{\textsc{\romannumeral 2}},{\textsc{\romannumeral 3}},{\textsc{\romannumeral 4}},{\textsc{\romannumeral 5}},{\textsc{\romannumeral 6}}$ et pour $q\in S_J$ g\'en\'eral, $Gal(P_J(q))=Vol(P_J(q))$.
\end{theorem}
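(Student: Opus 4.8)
Le plan est de ramener l'\'enonc\'e au th\'eor\`eme \ref{corvolume}. Celui-ci repose sur deux hypoth\`eses : que $\tdiff(Gal(P_J(q)))=2$ et que le champ de vecteurs poss\`ede une $3$-forme rationnelle invariante. La premi\`ere est d\'ej\`a acquise : la proposition \ref{thmpainleve} affirme que pour chaque $J$ et pour $q\in S_J$ g\'en\'eral, $\tdiff(Gal(P_J(q)))=2$. Il ne reste donc qu'\`a produire, pour chaque $J$, une fonction rationnelle $f_J\in\mathbb{C}(x,u)$ telle que le champ $X_{P_J}$ pr\'eserve la $3$-forme $dvol_J:=f_J\,dx\wedge du\wedge dv$, puis \`a appliquer le th\'eor\`eme \ref{corvolume} \`a chaque fibre $q$ g\'en\'erale, ce qui revient exactement \`a invoquer le corollaire \ref{spvol}.

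Pour exhiber ces formes, je partirais du calcul de la d\'eriv\'ee de Lie. Pour $X_F=\frac{\partial}{\partial x}+v\frac{\partial}{\partial u}+F\frac{\partial}{\partial v}$, on a $\mathcal{L}_{X_F}(f\,dx\wedge du\wedge dv)=\left(X_F(f)+f\,\frac{\partial F}{\partial v}\right)dx\wedge du\wedge dv$, de sorte que l'invariance \'equivaut \`a $X_F(\log f)=-\frac{\partial F}{\partial v}$. En cherchant $f=f(x,u)$ ind\'ependante de $v$, le membre de gauche vaut $\frac{\partial \log f}{\partial x}+v\,\frac{\partial \log f}{\partial u}$, affine en $v$, tout comme $\frac{\partial F}{\partial v}$ pour les six \'equations. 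L'\'equation se scinde alors en deux : le coefficient de $v$ d\'etermine $\frac{\partial \log f}{\partial u}$ et le terme constant d\'etermine $\frac{\partial \log f}{\partial x}$. Le syst\`eme ainsi obtenu est compatible, $\log f$ s'int\`egre en une somme de logarithmes, et $f$ est rationnelle. Le calcul fournit $dvol_{\textsc{\romannumeral 1}}=dvol_{\textsc{\romannumeral 2}}=dx\wedge du\wedge dv$, puis $f_{\textsc{\romannumeral 3}}=\frac{x}{u^2}$, $f_{\textsc{\romannumeral 4}}=\frac{1}{u}$, $f_{\textsc{\romannumeral 5}}=\frac{x}{u(u-1)^2}$ et $f_{\textsc{\romannumeral 6}}=\frac{x(x-1)}{u(u-1)(u-x)}$.

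Une fois ces formes obtenues, la conclusion est formelle : pour $J$ fix\'e et $q\in S_J$ g\'en\'eral, la proposition \ref{thmpainleve} donne $\tdiff(Gal(P_J(q)))=2$, tandis que la restriction de $dvol_J$ \`a la fibre $M_q$ est invariante par $X_{P_J}\vert_q$ ; le th\'eor\`eme \ref{corvolume} entra\^ine alors $Gal(P_J(q))=Vol(P_J(q))$, o\`u $Vol(P_J(q))$ s'entend relativement \`a $dvol_J$. Le seul point demandant un peu de soin est donc l'existence m\^eme de ces volumes invariants : \'ecrivant $\frac{\partial F}{\partial v}=A(x,u)\,v+B(x,u)$, l'existence d'un $f$ rationnel repose sur l'identit\'e de compatibilit\'e $\frac{\partial A}{\partial x}=\frac{\partial B}{\partial u}$, que l'on v\'erifie cas par cas et qui traduit la nature hamiltonienne (non autonome) des \'equations. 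La v\'erification la plus d\'elicate est celle de $P_{\textsc{\romannumeral 6}}$, o\`u le terme $\frac{1}{u-x}$ figurant dans $B$ n'est compens\'e que par la d\'ependance en $x$ du facteur $\log(u-x)$ de $\log f_{\textsc{\romannumeral 6}}$. En revanche, la partie r\'eellement difficile, le calcul du type diff\'erentiel, a d\'ej\`a \'et\'e trait\'ee par la proposition \ref{thmpainleve} au moyen de la cha\^ine de d\'eg\'en\'erescences, de sorte qu'il ne subsiste ici aucun obstacle conceptuel.
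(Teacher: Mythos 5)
Votre d\'emonstration est correcte et suit le m\^eme sch\'ema que celle de l'article : la proposition (\ref{thmpainlev\'e}) fournit $\tdiff(Gal(P_J(q)))=2$ pour $q$ g\'en\'eral, et il ne reste qu'\`a v\'erifier l'hypoth\`ese de volume invariant du th\'eor\`eme (\ref{corvolume}). La seule diff\'erence porte sur cette v\'erification : vous int\'egrez directement l'\'equation de divergence $X_F(\log f)=-\frac{\partial F}{\partial v}$ dans les coordonn\'ees $(x,u,v)$ et exhibez les facteurs $f_J$ explicites, tandis que l'article conjugue le champ, via les hamiltoniens d'Okamoto, \`a un champ pr\'eservant $dx\wedge du\wedge dv$. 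Les deux proc\'ed\'es sont \'equivalents : votre $f_{{\textsc{\romannumeral 3}}}=x/u^2$ est, \`a constante pr\`es, l'inverse du jacobien $\partial^2 H_{{\textsc{\romannumeral 3}},q}/\partial v^2=4u^2/x$ du changement de variables hamiltonien, et de m\^eme pour les autres \'equations. Vos formes $f_J$ sont exactes (je les ai v\'erifi\'ees), ind\'ependantes des param\`etres, et la condition de compatibilit\'e $\frac{\partial A}{\partial x}=\frac{\partial B}{\partial u}$ que vous invoquez est bien satisfaite cas par cas, y compris pour $P_{{\textsc{\romannumeral 6}}}$ o\`u les deux membres valent $1/(u-x)^2$. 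Votre argument a l'avantage d'\^etre autonome (aucune r\'ef\'erence aux formes hamiltoniennes n'est n\'ecessaire), celui de l'article de rattacher le volume invariant \`a la structure hamiltonienne connue des \'equations de Painlev\'e ; dans les deux cas la partie substantielle du travail est d\'ej\`a contenue dans la cha\^ine de d\'eg\'en\'erescences de la proposition (\ref{thmpainlev\'e}).
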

\begin{proof}
Pour $J={\textsc{\romannumeral 1}},{\textsc{\romannumeral 2}}$, c'est le th\'eor\`eme (\ref{casalespe}) et le corollaire (\ref{2emepainleve}). 

Par la proposition (\ref{thmpainlev\'e}), pour $J=,{\textsc{\romannumeral 3}},{\textsc{\romannumeral 4}},{\textsc{\romannumeral 5}},{\textsc{\romannumeral 6}}$ et $q\in S_J$ g\'en\'eral, $\tdiff(Gal(P_{J}(q)))=2$. Fixons $J$ et $q$ parmi ces valeurs. Par le th\'eor\`eme (\ref{corvolume}), il suffit de montrer que le champ associ\'e \`a l'\'equation $P_J(q)$ pr\'eserve une $3$-forme. Nous allons nous placer dans des coordonn\'ees hamiltoniennes que nous pouvons trouver dans \cite{Okamoto}. Notons $F_{J,q}\in\mathbb{C}(x,u,v)$ la fonction rationnelle v\'erifiant $(P_J(q)):u''=F_{J,q}(x,u,u')$. Il existe $H_{J,q}\in\mathbb{C}(x,u,v)$ tel que le syst\`eme associ\'e \`a l'\'equation $P_J(q)$
\begin{align*}
\frac{du}{dx}&=v\\
\frac{dv}{dx}&=F_{J,q}
\end{align*}
 soit \'equivalent au syst\`eme hamiltonien
\begin{align*}
\frac{du}{dx}&=\frac{\partial H_{J,q}}{\partial v}\\
\frac{dv}{dx}&=-\frac{\partial H_{J,q}}{\partial u}
\end{align*}
Le tir\'e en arri\`ere du champ $\frac{\partial}{\partial x}+ v\frac{\partial}{\partial u}+F_{J,q}\frac{\partial}{\partial v}$ par l'application birationnelle de $\mathbb{C}^3$: $(x,u,v)\mapsto (x,u,\frac{\partial H_{J,q}}{\partial v})$ est le champ $Y_{J,q}:=\frac{\partial}{\partial x}+ \frac{\partial H_{J,q}}{\partial v}\frac{\partial}{\partial u}-\frac{\partial H_{J,q}}{\partial u}\frac{\partial}{\partial v}$.
Donnons la liste de ces applications:
\begin{align*}
H_{{\textsc{\romannumeral 3}},q}&= \frac{1}{x}\left[ 2u^2v^2-(2axu^2+(2b+1)u-2cx)v+a(b+d)xu\right] \\
H_{{\textsc{\romannumeral 4}},q}&= 2uv^2-(u^2+2xu+2a)v+bu\\
 H_{{\textsc{\romannumeral 5}},q}&= \frac{1}{x}\left[u(u-1)^2v^2-\left(a(u-1)^2+bu(u-1)-cxu\right)v+\frac{1}{4}\left((a+b)^2-d^2\right)(u-1)\right] \\
H_{{\textsc{\romannumeral 6}},q}&=\frac{1}{x(x-1)}\left[u(u-1)(u-x)v^2
-\left(a(u-1)(u-x)+bu(u-x)+(c-1)u(u-1)\right)\phantom{\frac{1}{4}}\right.
\\&\left.+\frac{1}{4}\left((a+b+c-1)^2-d^2\right)(u-x)\right]
 \end{align*}
 o\`{u} $(a,b,c,d)\in\mathbb{C}^4$.
Ainsi $\mathcal{L}_{Y_{J,q}}(dx\wedge du\wedge dv)=0$. Ceci conclut la preuve du th\'eor\`eme.
\end{proof}
\begin{corollary}\label{coroirred}
Pour $J={\textsc{\romannumeral 1}},{\textsc{\romannumeral 2}},{\textsc{\romannumeral 3}},{\textsc{\romannumeral 4}},{\textsc{\romannumeral 5}},{\textsc{\romannumeral 6}}$ et pour $q\in S_J$ g\'en\'eral, l'\'equation $P_J(q)$ est irr\'eductible.
\end{corollary}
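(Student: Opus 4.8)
Le plan est de composer directement les deux \'enonc\'es qui pr\'ec\`edent. Je commencerais par observer que, pour chaque $J$, l'\'equation $P_J(q)$ est une \'equation du second ordre de la forme \eqref{Eq}~: le champ de vecteurs associ\'e est $X_{F_J}$ avec $F_J\in\mathbb{C}(x,u,v,q)$ la fonction rationnelle v\'erifiant $P_J(q):u''=F_J(x,u,u',q)$. Le crit\`ere d'irr\'eductibilit\'e du th\'eor\`eme \ref{croissancered}, issu de \cite{Casaleirred}, s'applique donc \`a $P_J(q)$ pour toute valeur admissible du param\`etre.

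Ensuite, j'utiliserais le th\'eor\`eme \ref{corirred}~: pour $J=\textsc{\romannumeral 1},\ldots,\textsc{\romannumeral 6}$ et $q\in S_J$ g\'en\'eral, on a l'\'egalit\'e $Gal(P_J(q))=Vol(P_J(q))$. Fixons une telle valeur g\'en\'erale de $q$. En appliquant le th\'eor\`eme \ref{croissancered} avec $q_0=q$, cette \'egalit\'e entra\^ine imm\'ediatement que l'\'equation $P_J(q)$ est irr\'eductible au sens de Nishioka-Umemura. Comme l'ensemble des param\`etres g\'en\'eraux est le compl\'ementaire d'une union d\'enombrable de sous-vari\'et\'es ferm\'ees strictes dans $S_J=\mathbb{C}^d$, il est en particulier dense et non vide, de sorte que l'\'enonc\'e a bien un contenu.

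Je ne vois pas d'obstacle r\'eel dans cette preuve~: tout le travail a \'et\'e fait en amont, d'une part dans le calcul du groupo\"ide de Galois pour des param\`etres g\'en\'eraux (th\'eor\`eme \ref{corirred}, obtenu via la semi-continuit\'e du type diff\'erentiel du th\'eor\`eme \ref{thmcroissance} et la d\'eg\'en\'erescence des \'equations de Painlev\'e), et d'autre part dans le crit\`ere de \cite{Casaleirred} reliant la structure $Gal=Vol$ \`a l'irr\'eductibilit\'e (th\'eor\`eme \ref{croissancered}). Le corollaire n'est que la juxtaposition de ces deux r\'esultats, et sa preuve tient en deux lignes. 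La seule subtilit\'e conceptuelle, d\'ej\`a soulign\'ee dans la remarque qui suit le corollaire \ref{speirred}, est que l'hypoth\`ese v\'eritablement utilis\'ee est $Gal=Vol$ (\'equivalente ici \`a $\tdiff=2$ gr\^ace \`a la pr\'eservation d'un volume), et non la seule irr\'eductibilit\'e de l'\'equation sp\'ecialis\'ee en un param\`etre de base~; mais cette subtilit\'e a \'et\'e absorb\'ee dans la preuve du th\'eor\`eme \ref{corirred}.
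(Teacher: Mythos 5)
Votre preuve est correcte et suit exactement la d\'emarche du texte : on combine le th\'eor\`eme \ref{corirred} ($Gal(P_J(q))=Vol(P_J(q))$ pour $q$ g\'en\'eral) avec le crit\`ere du th\'eor\`eme \ref{croissancered} issu de \cite{Casaleirred}. Les remarques additionnelles sur la densit\'e de l'ensemble des param\`etres g\'en\'eraux et sur le r\^ole de l'hypoth\`ese $Gal=Vol$ sont justes mais non n\'ecessaires.
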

\begin{proof}
Par le th\'eor\`eme (\ref{corirred}), pour $J={\textsc{\romannumeral 1}},{\textsc{\romannumeral 2}},{\textsc{\romannumeral 3}},{\textsc{\romannumeral 4}},{\textsc{\romannumeral 5}},{\textsc{\romannumeral 6}}$ et pour $q\in S_J$ g\'en\'eral, $Gal(P_J(q))=Vol(P_J(q))$. Le th\'eor\`eme (\ref{croissancered}) termine la preuve.
\end{proof}

\bibliographystyle{plain}
\bibliography{biblio}

\end{document}